\newtheorem{theorem}{Theorem}[section] 
\newtheorem{corollary}{Corollary}[section]
\newtheorem{lemma}{Lemma}[section]
\newtheorem{definition}{Definition}[section]
\newtheorem{assumption}{Assumption}
\newtheorem{example}{Example}
\def\bftheta{\boldsymbol{\theta}}
\def\bfDelta{\boldsymbol{\Delta}}
\def\bfA{\boldsymbol{A}}
\def\bfB{\boldsymbol{B}}
\def\bfP{\boldsymbol{p}}
\def\bfx{\boldsymbol{x}}
\def\bfy{\boldsymbol{y}}
\def\bfD{\boldsymbol{D}}
\def\bfDelta{\boldsymbol{\Delta}}
\def\supp{\mbox{supp}}
\def\ddefloop#1{\ifx\ddefloop#1\else\ddef{#1}\expandafter\ddefloop\fi}
\def\ddef#1{\expandafter\def\csname bb#1\endcsname{\ensuremath{\mathbb{#1}}}}
\def\ddef#1{\expandafter\def\csname c#1\endcsname{\ensuremath{\mathcal{#1}}}}
\def\ddef#1{\expandafter\def\csname v#1\endcsname{\ensuremath{\boldsymbol{#1}}}}
\def\ddef#1{\expandafter\def\csname v#1\endcsname{\ensuremath{\boldsymbol{\csname #1\endcsname}}}}
\newcommand{\nosemic}{\renewcommand{\@endalgocfline}{\relax}}
\newcommand{\dosemic}{\renewcommand{\@endalgocfline}{\algocf@endline}}
\let\oldnl\nl
\newcommand{\nonl}{\renewcommand{\nl}{\let\nl\oldnl}}
\newcommand{\ting}[1]{\textcolor{red}{Ting: #1}}
\newcommand{\yudi}[1]{\textcolor{blue}{Yudi: #1}}
\begin{document}
	
	\title{Power Grid State Estimation under General Cyber-Physical Attacks }
\author{Yudi Huang, \textit{Student Member, IEEE}, Ting He, \textit{Senior Member, IEEE}, Nilanjan Ray Chaudhuri, \\\textit{Senior Member, IEEE}, and Thomas La Porta \textit{Fellow, IEEE}
\thanks{The authors are with the School of Electrical Engineering and Computer Science, Pennsylvania State University, University Park, PA 16802, USA
(e-mail: \{yxh5389, tzh58, nuc88, tfl12\}@psu.edu). }
\thanks{ This work was supported by the National Science Foundation under award ECCS-1836827.}
\thanks{ A preliminary version of this work was presented at SmartGridComm'20~\cite{yudi20SmartGridComm}. }
\vspace{-2em}
}

\maketitle

\begin{abstract}
Effective defense against cyber-physical attacks in power grid requires the capability of accurate damage assessment within the attacked area.
While some solutions have been proposed to recover the phase angles and the link status (i.e., breaker status) within the attacked area, 
existing solutions made the limiting assumption that the grid stays connected after the attack. To fill this gap, we study the problem of recovering the phase angles and the link status under a general cyber-physical attack that may partition the grid into islands. To this end, we (i) show that the existing solutions and recovery conditions still hold if the post-attack power injections in the attacked area are known, and (ii) propose a linear programming-based algorithm that can perfectly recover the link status under certain conditions even if the post-attack power injections are unknown. Our numerical evaluations based on the Polish power grid demonstrate that the proposed algorithm is highly accurate in localizing failed links once the phase angles are known.  \looseness=-1
\end{abstract}
	
\begin{IEEEkeywords}
Power grid state estimation, cyber-physical attack, failure localization.
\end{IEEEkeywords}

\section{Introduction} \label{sec:Intro}

Modern power grids are interdependent cyber-physical systems consisting of a power transmission system (power lines, substations, etc.) and an associated control system (Supervisory Control and Data Acquisition - SCADA and Wide-Area Monitoring Protection and Control - WAMPAC) that monitors and controls the status of the power grid. This interdependency raises a legitimate concern: what happens if an attacker attacks both the physical grid and its control system simultaneously?
The resulting attack, known as a \emph{joint cyber-physical attack}, can cause large-scale blackouts, as the cyber attack can blindfold the control system and thus make the physical attack on the power grid more damaging.  For example, one such attack on Ukraine's power grid left 225,000 people without power for days \cite{Fairley16Spectrum}.

The potential severity of cyber-physical attacks has attracted efforts in countering these attacks \cite{Soltan18TCNS,Soltan17PES}. One of the challenges in dealing with such attacks is that as the cyber attack blocks measurements (e.g., phase angles, breaker status, and so on) from the attacked area, the control center is unable to accurately identify the damage caused by the physical attack (e.g., which lines are disconnected) and hence unable to make accurate mitigation decisions. To address this challenge, solutions have been proposed to estimate the state of the power grid inside the attacked area using power flow models.
Specifically, \cite{Soltan18TCNS} developed methods to estimate the grid state under cyber-physical attacks using the \emph{direct-current (DC) power flow model}, and \cite{Soltan17PES} developed similar methods using the \emph{alternating-current (AC) power flow model}. Both works made the \emph{limiting assumption} that either (i) the grid remains connected after the attack, or (ii) the control center is aware of the supply/demand in each island formed after the attack -- both leading to known post-attack active power injection at each bus.

In practice, however, disconnecting lines within the attacked area may cause partitioning of the grid and change the active power injections, and such changes within the attacked area will not be directly observable to the control center due to the cyber attack. Our goal is thus to estimate the power grid state, especially the breaker status of lines, under cyber-physical attacks without the above assumption.

\subsection{Related Work}


Power grid state estimation, as a key functionality for supervisory control, has been extensively studied in the literature \cite{huang2012state}. Secure state estimation under attack is of particular interest \cite{liu2011false}. Specifically, the attackers can distort sensor data with noise \cite{shoukry2017secure} or inject false data \cite{dan2010stealth} so that the control center cannot correctly estimate the phase angles \cite{vukovic2011network} or the topology \cite{kim2013topology} of the power grid. Recently, joint cyber-physical attacks are gaining attention, as the physical effect of such attacks {is} harder to detect due to the cyber attack \cite{deng2017ccpa, Soltan18TCNS, soltan2018react}.\looseness=-1

In particular, several approaches have been proposed for detecting failed links. In \cite{tate2008line, tate2009double}, the problem is formulated as a mixed-integer program, which becomes computationally inefficient when multiple links fail. The problem is formulated as a sparse recovery problem over an overcomplete representation in \cite{Zhu12TPS, chen2014efficient}, where the combinatorial sparse recovery problem was relaxed to a linear programming (LP) problem.  Based on this approach, the work in \cite{Soltan18TCNS} further 
establishes graph-theoretic conditions 
for accurately recovering the failed links. 
All the algorithms in~\cite{Zhu12TPS, chen2014efficient, Soltan18TCNS} aim to find the sparsest solution among the feasible solutions under the assumption that the power grid remains connected after failure. \looseness=-1

All the state estimation solutions require the modeling of the relationship between the observable parameters and the unknown variables of interest. To this end, two types of models have been considered: DC power flow model and AC power flow model. The AC power flow model~\cite{stevenson1994power} is based on the AC power flow equations, which can represent the voltage magnitude and phase angle at each bus in the system. The DC power flow model~\cite{Stott09TPS} is an approximation of the AC power flow model by neglecting the resistive losses and assuming a uniform voltage profile. In the literature of state estimation and particularly failure localization, most existing solutions are based on the DC power flow model~\cite{Zhu12TPS,chen2014efficient,Soltan18TCNS,tate2008line,tate2009double}, with few exceptions~\cite{Garcia16TPS,Soltan17PES}. We adopt the DC power flow model in this work due to its simplicity and robustness, and leave extensions to the AC power flow model to future work.  

\subsection{Summary of Contributions}

We aim at estimating the power grid state within an attacked area, focusing on the phase angles and the link status (i.e., breaker status of lines), 
with the following contributions: \begin{enumerate}
    \item We show that an existing rank-based condition for recovering the phase angles, previously established when the grid remains connected after the attack,  still holds without this limiting assumption.
    \item We show that existing graph-theoretical conditions for localizing the failed links, previously established under the above assumption of a connected grid, still hold without this assumption if the post-attack power injections are known.
    \item When the post-attack power injections are unknown but the phase angles are known, we develop an LP-based algorithm that is guaranteed to correctly identify the status of failed/operational links under certain conditions. \looseness=-1
    \item Our evaluations on a large grid topology show that the proposed algorithm is highly accurate in localizing the failed links with few false alarms, while the rank-based condition for recovering the phase angles can be hard to satisfy, signaling the importance of protecting PMU measurements. 
\end{enumerate}

\textbf{Roadmap.} Section~\ref{sec:Problem Formulation} formulates our overall problem, which is divided into three subproblems addressed in Sections~\ref{sec:Recovery of Phase Angles}--\ref{sec:Localizing Failed Links and Recover Active Powers}. Then Section~\ref{sec:Performance Evaluation} evaluates our solutions on a real grid topology, and Section~\ref{sec:Conclusion} concludes the paper. {Additional proofs are provided in the appendix. }

\section{Problem Formulation}\label{sec:Problem Formulation}

\subsection{Power Grid Model}

We model the power grid as a connected undirected graph $G=(V,E)$, where $V$ is the set of nodes (buses) and $E$ the set of links (transmission lines). Each link $e=(s,t)$ is associated with a \emph{reactance} $r_{st}$ ($r_{st} = r_{ts}$) and a status $\in \{\mbox{operational}, \mbox{failed}\}$ (assumed to be operational before attack). Each node $v$ is associated with a phase angle $\theta_v$ and an active power injection $p_v$. The phase angles $\bftheta:=(\theta_v)_{v\in V}$ and the active powers $\bfP:=(p_v)_{v\in V}$ are related by
\begin{align}\label{eq:B theta = p}
    \bfB \bftheta = \bfP,
\end{align}
where $\bfB:=(b_{uv})_{u,v\in V} \in \mathbb{R}^{|V|\times|V|}$ is the \emph{admittance matrix}, defined as:
\begin{align}
    b_{uv} &=\left\{\begin{array}{ll}
    0 & \mbox{if }u\neq v, (u,v)\not\in E,\\
    -1/r_{uv} &  \mbox{if } u\neq v, (u,v)\in E,\\
    -\sum_{w\in V\setminus \{u\}}b_{uw} &\mbox{if }u=v.
    \end{array}\right.
\end{align}
Given an arbitrary orientation of each link, the topology of $G$ can also be represented by the \emph{incidence matrix} $\bfD\in \{-1,0,1\}^{|V|\times|E|}$, whose $(i,j)$-th entry is defined as
\begin{align}
    d_{ij} &= \left\{\begin{array}{ll}
    1 & \mbox{if link }e_j\mbox{ comes out of node }v_i,\\
    -1 & \mbox{if link }e_j\mbox{ goes into node }v_i,\\
    0 & \mbox{otherwise.}
    \end{array}\right.
\end{align}

We assume that each node is deployed with a phasor measurement unit (PMU) measuring the phase angle and remote terminal units (RTUs) measuring the active power injection at this node, and the status and the power flows of its incident links. These reports are sent to the control center via a SCADA or WAMPAC system. The PMU data is assumed to be communicated over a relatively secure WAMPAC network, and the RTU measurements over a more vulnerable SCADA network. \looseness=-1 

{\emph{Remark:} 
Under the North American SynchroPhasor Initiative (NASPI)~\cite{dagle2010north}, the number of PMUs is steadily growing~\cite{PMUdeployment}, and installing PMUs is becoming part of routine transmission system upgrades and new construction~\cite{NASPI2015Kit}. Some utilities have achieved full observability in their networks, e.g., Dominion Power has piloted the PMU-based linear state estimator~\cite{jones2013three,jones2014methodology}. These trends indicate that it is just a matter of time that complete observability through PMUs is achieved, which is the scenario we will consider.\looseness=-1
%
}

\subsection{Attack Model}


\begin{figure}[tb]
\centering
\includegraphics[width=.6\linewidth]{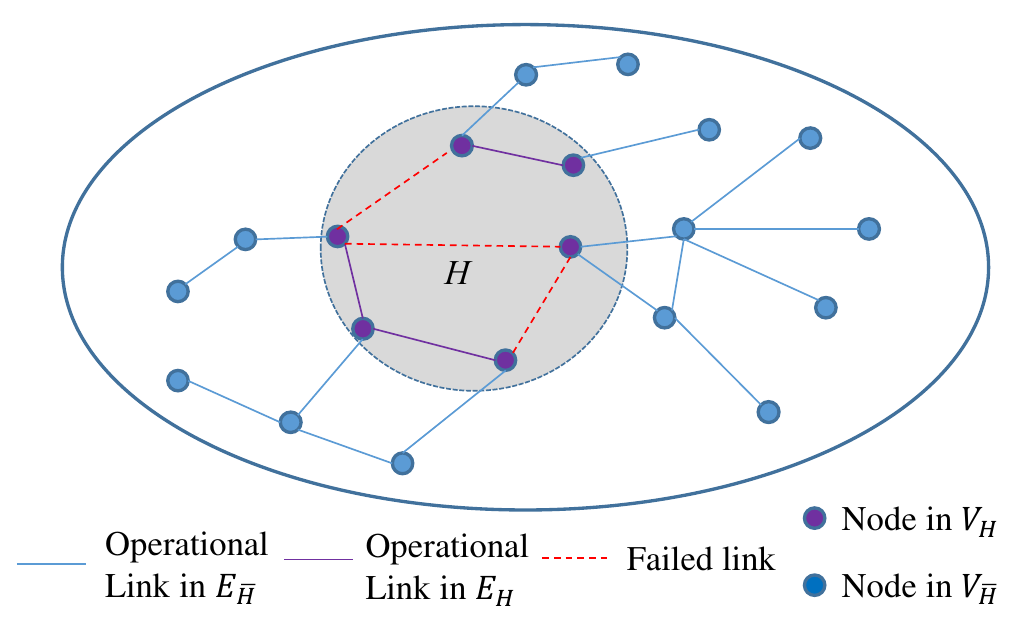}
\vspace{-1em}
\caption{A cyber-physical attack that blocks information from the attacked area $H$ while disconnecting certain lines within $H$. 
} \label{fig:cyber_physical_attack_hyd}
\vspace{-.5em}
\end{figure}

As illustrated in Fig.~\ref{fig:cyber_physical_attack_hyd}, an adversary attacks an area $H$ of the power grid by: (i) blocking reports from the nodes within $H$ (cyber attack), and (ii) disconnecting a set $F$ ($|F|>0$) of links within $H$ (physical attack). Formally, $H=(V_H,E_H)$ is a subgraph induced by a set of nodes $V_H$, where $E_H$ is the set of links for which both endpoints are in $V_H$. 

\subsection{State Estimation Problem}

\begin{table}[tb]
\footnotesize
\renewcommand{\arraystretch}{1.3}
\caption{Notations} \label{tab:notation}
\vspace{-.5em}
\centering
\begin{tabular}{c|l}
  \hline
  Notation & Description  \\
  \hline
 $G=(V,E)$ & power grid \\
 \hline
 $H$, $\bar{H}$ & attacked/unattacked area \\
  \hline
 $F$ {, $E_1$} & set of failed{/operational} links {after attack} \\
 \hline
 $\bfB${, $\bfD$} & admittance{/incidence} matrix \\
 \hline
 $\bftheta${, $\bftheta'$} & vector of phase angles {before/after attack} \\
 \hline
 $\bfP${, $\bfP'$} & vector of active power injections {before/after attack} \\
 \hline
 $\bm{\Gamma}$ & $\text{diag}\{\frac{1}{r_e}\}_{e\in E}$ ($r_e$: reactance of link $e$)\\
 \hline
 $\bfDelta$ & vector of changes in active power injections \\
  \hline
 {$\tilde{\vD}$} &  {matrix of (hypothetical) post-attack power flows \eqref{eq:tilde{D}} }\\
 \hline
 {$\eta$} & {rounding threshold in Algorithm~\ref{alg: fedgeDet}} \\
 \hline
 {$S_U, f_{U,g}, f_{U,1(0)}$}   & {definitions related to hyper-node \eqref{eq:properties of hyper-node}}
 \\
 \hline
 {$S_T, f_{T,g}, f_{T,1(0)}$ } & {definitions related to fail-cover hyper-node \eqref{eq:def about fail-cover hyper-node}}
 \\
  \hline
\end{tabular}
\vspace{-.5em}
\end{table}
\normalsize

\textbf{Notation.} The main notations are summarized in Table~\ref{tab:notation}.
Moreover, given a subgraph $X$ of $G$, $V_X$ and $E_X$ denote the subsets of nodes/links in $X$, and $\bfx_X$ denotes the subvector of a vector $\bfx$ containing elements corresponding to $X$. Similarly, given two subgraphs $X$ and $Y$ of ${G}$, $\bfA_{X|Y}$ denotes the submatrix of a matrix $\bfA$ containing rows corresponding to $X$ and columns corresponding to $Y$.
We use $D_H\in \{-1,0,1\}^{|V_H|\times|E_H|}$ to denote the incidence matrix of the  attacked area $H$.
For each quantity $x$, we use $x'$ to denote its value after the attack. {For a set $A$, $\mathbb{I}_{A}(e) = 1$ if $e\in A$ and $\mathbb{I}_{A}(e) = 0$ otherwise.}

\textbf{Goal.}
Our goal is to recover the post-attack phase angles $\bftheta'_H$ and localize the failed links $F$ within the attacked area, based on the state variables before the attack and the measurements from the unattacked area $\bar{H}$ after the attack.

In contrast to the previous works, we consider cases where the attack may partition the grid into multiple islands, which can cause changes in active power injections to maintain the supply/demand balance in each island.
Let $\bfDelta = (\Delta_v)_{v\in V} := \bfP-\bfP'$ denote the change in active power injections, where $\Delta_v>0$ if $v$ is a generator bus and $\Delta_v\le0$ if $v$ is a load bus. 


\section{Recovery of Phase Angles}\label{sec:Recovery of Phase Angles}

Under the assumption that $G$ remains connected after the attack and thus $\bfDelta = \bm{0}$, \cite{Soltan18TCNS} showed that the post-attack phase angles $\bftheta'_H$ can be recovered if the submatrix $\bfB_{\bar{H}|H}$ of the admittance matrix has a full column rank. Below, we will show that the same condition holds without this limiting assumption. 

Specifically, we have the following lemma ({see proof in appendix}) that extends \cite[Lemma~1]{Soltan18TCNS} to the case of arbitrary $\bfDelta$ (``$\supp$'': indices of non-zero entries in the input vector).\looseness=-1

\begin{lemma}\label{lem:supp in V_H}
$\supp(\bfB(\bftheta-\bftheta')-\bfDelta) \subseteq V_H$.
\end{lemma}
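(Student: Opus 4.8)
The plan is to compute the vector $\bfB(\bftheta-\bftheta')-\bfDelta$ explicitly and then read off its support from the locality of the admittance matrices. Write $\bfB'$ for the post-attack admittance matrix, so that the DC power flow relation \eqref{eq:B theta = p} holds both before and after the attack: $\bfB\bftheta=\bfP$ and $\bfB'\bftheta'=\bfP'$.

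First I would substitute these two relations. Using $\bfB\bftheta=\bfP$ together with the definition $\bfDelta:=\bfP-\bfP'$,
\begin{align}
\bfB(\bftheta-\bftheta')-\bfDelta
&= \bfP-\bfB\bftheta'-(\bfP-\bfP') \nonumber\\
&= \bfP'-\bfB\bftheta' \nonumber\\
&= (\bfB'-\bfB)\bftheta', \nonumber
\end{align}
where the last equality invokes the post-attack equation $\bfP'=\bfB'\bftheta'$. Thus the quantity in the statement equals $(\bfB'-\bfB)\bftheta'$, and it suffices to bound the support of this vector.

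The key structural observation is that $\bfB-\bfB'$ is supported entirely on the rows and columns indexed by $V_H$. Indeed, the physical attack removes only the links in $F\subseteq E_H$, and since $H$ is the subgraph \emph{induced} by $V_H$, both endpoints of every failed link lie in $V_H$. Deleting a link $(s,t)$ changes only the off-diagonal reactance entries $b_{st},b_{ts}$ and the associated diagonal entries $b_{ss},b_{tt}$, all indexed by $s,t\in V_H$. Consequently every row of $\bfB-\bfB'$ outside $V_H$ is identically zero.

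Finally I would combine the two facts: because $\bfB'-\bfB$ has all-zero rows outside $V_H$, the vector $(\bfB'-\bfB)\bftheta'$ vanishes in every coordinate outside $V_H$, so $\supp(\bfB(\bftheta-\bftheta')-\bfDelta)=\supp\big((\bfB'-\bfB)\bftheta'\big)\subseteq V_H$. I do not expect a genuine obstacle here; the argument is a one-line algebraic identity followed by a locality observation. The only points meriting care are making explicit that the post-attack flow equation $\bfB'\bftheta'=\bfP'$ holds—so that $\bfDelta$ correctly absorbs the injection changes caused by islanding—and noting that $F\subseteq E_H$ is exactly what confines all perturbed entries of the admittance matrix to $V_H$.
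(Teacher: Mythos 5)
Your proposal is correct and follows essentially the same route as the paper: both reduce the claim to the identity $\bfB(\bftheta-\bftheta')-\bfDelta=(\bfB'-\bfB)\bftheta'$ via the pre- and post-attack flow equations and then observe that the admittance perturbation is confined to coordinates in $V_H$ because $F\subseteq E_H$. The only cosmetic difference is that the paper writes $\bfB'-\bfB$ explicitly as the rank-one sum $\sum_{(s,t)\in F}b_{st}\bfx_{st}\bfx_{st}^T$, whereas you argue directly that all rows of $\bfB'-\bfB$ outside $V_H$ vanish, which is the same locality fact.
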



Using Lemma~\ref{lem:supp in V_H},
we prove that the recovery condition in \cite[Theorem~1]{Soltan18TCNS} remains sufficient even if the post-attack grid may be disconnected ({see proof in appendix}). \looseness=-1

\begin{theorem}\label{thm:recovery of phase angles}
The phase angles $\bftheta'_H$ within the attacked area can be recovered correctly if $\bfB_{\bar{H}|H}$ has a full column rank.
\end{theorem}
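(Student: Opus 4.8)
The plan is to use Lemma~\ref{lem:supp in V_H} to collapse the recovery of $\bftheta'_H$ into a single linear system in which $\bftheta'_H$ is the only unknown, and then to invoke the full-column-rank hypothesis to guarantee a unique solution. The guiding observation is that $\bftheta'_H$ enters the post-attack physics only through $\bfB_{\bar{H}|H}$, so once every observable quantity is moved to one side, recovery reduces to invertibility of that submatrix on its column space.

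First I would restrict the conclusion of Lemma~\ref{lem:supp in V_H} to the rows indexed by the unattacked area: since $\supp(\bfB(\bftheta-\bftheta')-\bfDelta)\subseteq V_H$, every entry indexed by a node of $\bar{H}$ must vanish, which I would write as
\[
\bfB_{\bar{H}|V}(\bftheta-\bftheta') = \bfDelta_{\bar{H}}.
\]
Then I would split the columns of $\bfB_{\bar{H}|V}$ into those indexed by $\bar{H}$ and those indexed by $H$ and collect all observable terms on the right, isolating the unknown as
\[
\bfB_{\bar{H}|H}\,\bftheta'_H = \bfB_{\bar{H}|\bar{H}}(\bftheta_{\bar{H}}-\bftheta'_{\bar{H}}) + \bfB_{\bar{H}|H}\bftheta_H - \bfDelta_{\bar{H}}.
\]
Next I would argue the right-hand side is entirely known: $\bfB$ and $\bftheta$ are pre-attack data, $\bftheta'_{\bar{H}}$ is reported by the secure PMUs in $\bar{H}$, and $\bfDelta_{\bar{H}}=\bfP_{\bar{H}}-\bfP'_{\bar{H}}$ is computable from the pre-attack injections and the post-attack RTU measurements in the unattacked area. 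Finally, full column rank of $\bfB_{\bar{H}|H}$ makes the map $\bftheta'_H\mapsto\bfB_{\bar{H}|H}\bftheta'_H$ injective, so the system has at most one solution; since the true $\bftheta'_H$ is a solution, it is the unique one and is thereby recovered.

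The main obstacle — and the precise place where this result goes beyond \cite{Soltan18TCNS} — is the bookkeeping of $\bfDelta_{\bar{H}}$. In the connected-grid case one simply sets $\bfDelta=\bm{0}$ and the $\bar{H}$-rows immediately yield a system in $\bftheta'_H$; here, partitioning can shift injections even at $\bar{H}$ buses, so the crux is that Lemma~\ref{lem:supp in V_H} has already folded that shift into a combination whose $\bar{H}$-rows vanish. Consequently the delicate point to check is not the algebra but the \emph{observability} claim: I would verify carefully that each summand on the right-hand side, in particular $\bfDelta_{\bar{H}}$, is genuinely available from post-attack measurements of the unattacked area, after which the conclusion follows by the standard injectivity-implies-uniqueness argument.
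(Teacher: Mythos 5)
Your proposal is correct and matches the paper's own proof essentially step for step: both restrict Lemma~\ref{lem:supp in V_H} to the rows of $\bar{H}$, rearrange into $\bfB_{\bar{H}|H}\bftheta'_H = \bfB_{\bar{H}|H}\bftheta_H + \bfB_{\bar{H}|\bar{H}}(\bftheta_{\bar{H}}-\bftheta'_{\bar{H}}) - \bfDelta_{\bar{H}}$, and conclude uniqueness from the full-column-rank (injectivity) hypothesis. Your additional care in verifying that $\bfDelta_{\bar{H}}$ is observable from post-attack measurements in the unattacked area is a sound elaboration of what the paper states implicitly as ``known to the control center.''
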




\section{Localizing Failed Links with Known Active Powers}\label{sec:Localizing Failed Links}

Now assume that the post-attack phase angles $\bftheta'$ have been recovered. This can be achieved when $\bfB_{\bar{H}|H}$ has a full column rank {as shown in Theorem~\ref{thm:recovery of phase angles}}. Alternatively, $\bftheta'$ can be directly reported from PMUs -- assuming that the cyber attack only affects SCADA but not the WAMPAC network carrying PMU measurements. {This can occur in a hybrid control system, where the PMU measurements are reported via a modern WAMPAC network and the other sensor measurements are reported via a legacy SCADA network~\cite{WASA}. While SCADA is known to be vulnerable to cyber attacks~\cite{Fairley16Spectrum}, WAMPAC is designed to satisfy stronger cyber security requirements~\cite{WAMPACsecurity}.}
We will show that as long as the change in active powers $\bfDelta$ is known, the failed links can be uniquely localized under the same conditions as specified in \cite{Soltan18TCNS}.  \looseness=-1

First, we note that under practical assumptions, the conditions presented in Section~\ref{sec:Recovery of Phase Angles} for recovering the phase angles greatly simplify the recovery of the active powers. To this end, we assume that the adjustment of active power injections at generator/load buses follows the \emph{proportional load shedding/generation reduction policy}, where (i) either the load or the generation (but not both) will be reduced upon the formation of an island, and (ii) if nodes $u$ and $v$ are in the same island and of the same type (both load or generator), then $p_u'/p_u = p_v'/p_v$. This policy models the common practice in adjusting load/generation in the case of islanding~\cite{pal2006robust,lu2016under}.
{It implies that (1) the machine rating of a generator is proportional to its pre-disturbance (nominal) power output, as is typical in power grid operation, and (2) in the case of demand more than supply in an island, the frequency nadir during the inertial phase becomes low enough to activate underfrequency relays, and the load shedding action that follows leads to balance of supply and demand before the governor response kicks in (due to slow turbine-governor time constants involved in that process). }\looseness=-1

We observe the following cases 
in which the active powers can be recovered via this policy ({see proof in appendix}). 

\begin{lemma}\label{lem:recover delta}
Let $N(v;\bar{H})$ denote the set of all the nodes in $\bar{H}$ that are connected to node $v$ via links in $E\setminus E_H$.
Then under the proportional load shedding policy, $\Delta_v$ for $v\in V_H$ can be  recovered unless $N(v;\bar{H})=\emptyset$ or every $u\in N(v;\bar{H})$ is of a different type from $v$ with $\Delta_u=0$.
\end{lemma}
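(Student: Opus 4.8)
The plan is to show that whenever neither exclusion condition holds, the proportional policy supplies enough algebraic constraints to pin down $\Delta_v = p_v - p_v'$ from the pre-attack injections (which are all known) and the post-attack measurements in $\bar{H}$. The starting observation is structural: since the failed links satisfy $F\subseteq E_H$, every link in $E\setminus E_H$ remains operational after the attack, so any $u\in N(v;\bar{H})$ necessarily lies in the same post-attack island as $v$. This is the fact that lets us transfer the proportional-scaling relation between $v$ and an observable node, and it is why co-membership must be certified through $E\setminus E_H$ rather than through the possibly-failed links of $E_H$ (whose status is not yet known at this stage). Because $p_u$ is known and $p_u'$ is measured for every $u\in\bar{H}$, we also know $\Delta_u$ for all such $u$.

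Assuming $N(v;\bar{H})\neq\emptyset$ and that not every $u\in N(v;\bar{H})$ is of a different type from $v$ with $\Delta_u=0$, I would split into two exhaustive cases. In the first case there exists $u\in N(v;\bar{H})$ of the same type as $v$; since $u$ and $v$ share an island and a type, policy~(ii) gives $p_v'/p_v = p_u'/p_u$, so $\Delta_v = p_v\,(1 - p_u'/p_u)$ is determined by known quantities (for a degenerate zero-injection bus $p_v=0$ one has $\Delta_v=0$ directly). In the remaining case every $u\in N(v;\bar{H})$ is of a different type from $v$, but some $u_0$ has $\Delta_{u_0}\neq 0$. Here I would invoke policy~(i): within $u_0$'s island only load or only generation is reduced, and $\Delta_{u_0}\neq 0$ identifies $u_0$'s type as the adjusted one; since $v$ is of the opposite type and in the same island, its injection is left unchanged, whence $\Delta_v = 0$. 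Either case yields $\Delta_v$, establishing recoverability.

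To confirm that the stated exceptions are the genuinely obstructive cases, I would argue that they leave $\Delta_v$ ambiguous. When $N(v;\bar{H})=\emptyset$ there is no observable node guaranteed to share $v$'s island, so neither a scaling factor nor the island's adjustment decision can be read off. When every observable neighbor is of the opposite type with $\Delta_u=0$, the measurements are consistent both with the island being balanced (so $\Delta_v=0$) and with the island reducing $v$'s type (so $\Delta_v\neq 0$), leaving the two scenarios indistinguishable.

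The main obstacle I anticipate lies in the second case: the conclusion $\Delta_v=0$ does not follow from a scaling equation but from the qualitative ``only one type is adjusted per island'' clause, so the argument must carefully establish that $u_0$ being adjusted forces $v$'s type to be the unadjusted one \emph{within that same island}. The supporting bookkeeping---namely that connectivity through $E\setminus E_H$ guarantees common island membership before $F$ is localized---is the other point requiring care, since all subsequent steps rely on it.
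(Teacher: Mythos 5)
Your proposal is correct and takes essentially the same route as the paper's proof: the same structural observation that $F\subseteq E_H$ leaves all links in $E\setminus E_H$ operational, so every $u\in N(v;\bar{H})$ shares $v$'s post-attack island, followed by the same two-case analysis---proportional scaling $p_v'=p_v\,p_u'/p_u$ when a same-type neighbor exists, and policy~(i) forcing $\Delta_v=0$ when every neighbor is of the opposite type but some $\Delta_{u_0}\neq 0$. Your extra paragraph arguing that the excluded cases are genuinely ambiguous goes beyond what the lemma claims (and what the paper proves), but it is harmless.
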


\emph{Remark~1:} Under the condition of Theorem~\ref{thm:recovery of phase angles}, i.e., $\bfB_{\bar{H}|H}$ has a full column rank, each $v\in V_H$ must be the neighbor of at least one node in $\bar{H}$ (otherwise its corresponding column in $\bfB_{\bar{H}|H}$ will be $\mathbf{0}$), and thus $N(v;\bar{H})\neq \emptyset$. Moreover, majority of the nodes in practice are load buses, and thus each node in $H$ is likely to be a load bus neighboring to another load bus in $\bar{H}$.
Thus, we can usually recover $\bfDelta_H$ under the proportional load shedding policy if the condition for recovering $\bftheta'_H$ holds.  

\emph{Remark~2:} Besides the cases indicated in Lemma~\ref{lem:recover delta}, it is also easy to show that if $H$ contains no generator bus or no load bus, and $\sum_{v\in V_{\bar{H}}} \Delta_v = 0$, then we must have $\bfDelta_H=\bm{0}$.

Next, we will establish the conditions for localizing the failed links $F$ with known $\bftheta'$ and $\bfDelta$. The basic observation is the following property of the set $F$ ({see proof in appendix}). 

\begin{lemma}\label{lem:existence of solution with supp = F}
There exists a vector $\bfx\in \mathbb{R}^{|E_H| }$ that satisfies $\supp(\bfx) = F$, and
\begin{align}\label{eq:D_H x equation}
\bfD_H \bfx = \bfB_{H|G}(\bftheta - \bftheta') - \bfDelta_H.
\end{align}
\end{lemma}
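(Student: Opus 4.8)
The plan is to construct the vector $\bfx$ explicitly from the two power-flow equations rather than argue abstractly. The structural fact I would lean on is that the admittance matrix is the weighted graph Laplacian $\bfB=\bfD\bm{\Gamma}\bfD^{T}$, so that deleting the failed links $F$ is the same as zeroing their admittances. Writing $\bfD_F$ for the columns of $\bfD$ indexed by $F$ and $\bm{\Gamma}_F$ for the corresponding diagonal block of $\bm{\Gamma}$, the post-attack admittance matrix is $\bfB'=\bfB-\bfD_F\bm{\Gamma}_F\bfD_F^{T}$, and it satisfies $\bfB'\bftheta'=\bfP'$.

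First I would combine the pre- and post-attack relations $\bfB\bftheta=\bfP$, $\bfB'\bftheta'=\bfP'$ with the definition $\bfDelta=\bfP-\bfP'$ to collapse the right-hand side of \eqref{eq:D_H x equation}. A direct substitution gives
\begin{align*}
\bfB(\bftheta-\bftheta')-\bfDelta
&= \bfB\bftheta-\bfB\bftheta'-\bfP+\bfP' \\
&= \bfP'-\bfB\bftheta' \\
&= (\bfB'-\bfB)\bftheta' \\
&= -\bfD_F\bm{\Gamma}_F\bfD_F^{T}\bftheta'.
\end{align*}
This identity is the engine of the proof: it shows the target vector lies in the column span of $\bfD_F$, i.e.\ it is a superposition of the incidence columns of the failed links \emph{only}.

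Next I would localize this identity to the rows indexed by $V_H$. Because every failed link lies in $E_H$ (i.e.\ $F\subseteq E_H$), both endpoints of each $e\in F$ are in $V_H$, so the rows of $\bfD_F$ outside $V_H$ vanish and the $V_H$-rows of $\bfD_F$ coincide with the $F$-columns of the incidence matrix $\bfD_H$ of $H$. Hence, defining $\bfx\in\mathbb{R}^{|E_H|}$ by $\bfx_F:=-\bm{\Gamma}_F\bfD_F^{T}\bftheta'$ (so the entry on $e=(s,t)$ is $-(\theta'_s-\theta'_t)/r_e$) and $\bfx_{E_H\setminus F}:=\bm{0}$, the restricted identity reads exactly $\bfD_H\bfx=\bfB_{H|G}(\bftheta-\bftheta')-\bfDelta_H$, and by construction $\supp(\bfx)\subseteq F$.

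The step I expect to be the main obstacle is upgrading this to $\supp(\bfx)=F$, i.e.\ showing that every failed link $e=(s,t)$ has a nonzero post-attack terminal angle difference $\theta'_s\neq\theta'_t$. A link violating this would be indistinguishable, from the state alone, from an operational link, so I would establish it under the model's non-degeneracy: if $s$ and $t$ remain in a common post-attack island, I would argue that $\theta'_s=\theta'_t$ forces zero net rerouted flow across the cut separated by $e$, contradicting that $e$ genuinely carried flow; if $s$ and $t$ fall into distinct islands, the per-island angle references fixed by the recovery make $\theta'_s=\theta'_t$ a non-generic coincidence. Formalizing this non-degeneracy is the delicate point, whereas the algebraic construction above is routine once $\bfB'=\bfB-\bfD_F\bm{\Gamma}_F\bfD_F^{T}$ is recognized.
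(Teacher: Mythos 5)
Your construction is exactly the paper's, just written in matrix form: the identity $\bfB(\bftheta-\bftheta')-\bfDelta=(\bfB'-\bfB)\bftheta'=-\bfD_F\bm{\Gamma}_F\bfD_F^T\bftheta'$ is the paper's equation \eqref{eq:supp proof} (the paper uses the rank-one expansion $\bfB'=\bfB+\sum_{(s,t)\in F}b_{st}\bfx_{st}\bfx_{st}^T$ with $b_{st}=-1/r_{st}$, which equals your $\bfB-\bfD_F\bm{\Gamma}_F\bfD_F^T$), and your vector with $\bfx_F=-\bm{\Gamma}_F\bfD_F^T\bftheta'$ and zeros off $F$ is precisely the paper's $\bfy_H$ with $y_e=b_{st}(\theta'_s-\theta'_t)$ for $e=(s,t)\in F$; the restriction to the $V_H$-rows using $F\subseteq E_H$ (the paper's $\bfy_{\bar H}=\bm{0}$) is likewise identical. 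Everything through $\supp(\bfx)\subseteq F$ is correct and matches the paper's proof.

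The gap is the step you flag at the end: $\supp(\bfx)=F$ requires $\theta'_s\neq\theta'_t$ for every $(s,t)\in F$, and this is \emph{not provable} from the model --- it is false in general, so your plan to establish it as a non-degeneracy fact cannot succeed. Concretely, take a four-node ring $1$--$2$--$3$--$4$ with unit reactances, $p_1=1$, $p_3=-1$, $p_2=p_4=0$, plus a chord $(2,4)$ inside $H$: by symmetry $\theta_2=\theta_4$, the chord carries no flow, and if $F=\{(2,4)\}$ then the grid stays connected, $\bfB'\bftheta=\bfP$ still holds, and $\bftheta'=\bftheta$, so $\theta'_2=\theta'_4$ with both endpoints in a common island and no contradiction anywhere --- your within-island argument fails because its premise that $e$ ``genuinely carried flow'' is not given, and here the constructed $\bfx$ is identically zero, so $\supp(\bfx)=\emptyset\neq F$. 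Your cross-island remark that equality of angles is a ``non-generic coincidence'' is an admission rather than a proof. The paper does not prove this step either: it imposes it as Assumption~\ref{as:island_allLoad} ($\theta'_s\neq\theta'_t$ for all $(s,t)\in E_H$, inherited from \cite{Soltan18TCNS}), whose footnote states that such zero-flow links are simply ignored in failure localization. The correct repair is therefore not an argument but an appeal to that standing assumption, under which $x_e=-(\theta'_s-\theta'_t)/r_{st}\neq 0$ for all $e\in F$ is immediate.
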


This lemma, which replaces \cite[Lemma~2]{Soltan18TCNS}, implies that if one can find the conditions under which the solution to (\ref{eq:D_H x equation}) is unique, then the links corresponding to non-zero elements of this solution must be the failed links.
To this end, \cite{Soltan18TCNS} gave a set of graph-theoretic conditions. As these conditions are only about the solution space of $\bfD_H \bfx = \bfy,
$
they remain valid in our setting as long as the righthand side is known. 
We summarize these conditions below ({see proof in appendix}). \looseness=0 

\begin{theorem}\label{thm:localize failed links}
The failed links $F$ within the attacked area can be localized correctly if:\begin{enumerate}
\item $H$ is acyclic (i.e., a tree or a set of trees), in which case (\ref{eq:D_H x equation}) has a unique solution $\bfx$ for which $\supp(\bfx) = F$, or

\item $H$ is a planar graph satisfying (i) for any cycle $C$ in $H$, $|C\cap F|<|C\setminus F|$, and (ii) $F^*$ is $H^*$-separable\footnote{Here $H^*$ is the dual graph of $H$, and $F^*$ is the set of edges in $H^*$ such that each edge in $F^*$ connects a pair of vertices that correspond to adjacent faces in $H$ separated by a failed link.}, in which case the optimization $\min \|\bfx\|_1$ s.t. (\ref{eq:D_H x equation}) has a unique solution $\bfx$ for which $\supp(\bfx) = F$.
\end{enumerate}
\end{theorem}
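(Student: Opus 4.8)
The plan is to reduce the statement to the purely linear-algebraic and graph-theoretic facts already established in \cite{Soltan18TCNS}, using Lemma~\ref{lem:existence of solution with supp = F} in place of their particular-solution lemma \cite[Lemma~2]{Soltan18TCNS}. Writing $\bfy := \bfB_{H|G}(\bftheta-\bftheta') - \bfDelta_H$ for the (now known) right-hand side of \eqref{eq:D_H x equation}, Lemma~\ref{lem:existence of solution with supp = F} supplies a particular solution $\bfx^\ast$ with $\supp(\bfx^\ast)=F$. Since the full solution set of \eqref{eq:D_H x equation} is the affine space $\bfx^\ast + \ker(\bfD_H)$, the recovered support is determined by $\bfx^\ast$ together with $\ker(\bfD_H)$, and crucially $\ker(\bfD_H)$ depends only on the topology of $H$ and not on $\bfy$. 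Hence every argument in \cite{Soltan18TCNS} that concerns only the solution space of $\bfD_H\bfx=\bfy$ carries over verbatim, and it remains to verify in each case that $\bfx^\ast$ is the unique (respectively, unique $\ell_1$-minimal) solution.

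For the acyclic case I would argue directly. If $H$ is a forest, then $\bfD_H$ has linearly independent columns: on each tree component with $n$ nodes there are exactly $n-1$ edges and the corresponding incidence submatrix has rank $n-1$. Thus $\ker(\bfD_H)=\{\bm{0}\}$ and \eqref{eq:D_H x equation} has at most one solution; combined with the existence of $\bfx^\ast$, this makes $\bfx^\ast$ the unique solution, so solving the linear system and reading off its support recovers $F$ exactly.

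For the planar case the kernel is nontrivial, being the cycle space of $H$, which by planarity admits a basis of face-boundary vectors. Here I would invoke the null-space characterization of $\ell_1$ recovery: $\bfx^\ast$ is the unique minimizer of $\min\|\bfx\|_1$ subject to \eqref{eq:D_H x equation} if and only if $\|\boldsymbol{z}_F\|_1 < \|\boldsymbol{z}_{E_H\setminus F}\|_1$ for every nonzero $\boldsymbol{z}\in\ker(\bfD_H)$. Condition~(i), $|C\cap F| < |C\setminus F|$ for every cycle $C$, is exactly this inequality restricted to single signed cycle vectors, whose entries are $\pm 1$ on the edges of $C$; the role of the planar condition~(ii) is to promote it to arbitrary elements of the cycle space. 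Following \cite{Soltan18TCNS}, I would pass to the dual graph $H^\ast$, where a general cycle-space vector corresponds to an edge cut, and use the $H^\ast$-separability of $F^\ast$ to show that no such cut can place at least as much $\ell_1$ mass outside $F$ as inside it. This yields the strict null-space inequality for all nonzero $\boldsymbol{z}$, and hence the uniqueness of $\bfx^\ast$ as the $\ell_1$-minimizer.

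The main obstacle is the planar case, and specifically the passage from single cycles (condition~(i)) to the entire cycle space via the dual-graph separability (condition~(ii)), which is the combinatorial heart of \cite[Theorem~1]{Soltan18TCNS}. The essential point that keeps the extension to disconnected post-attack grids almost free is that this combinatorial argument never touches the right-hand side $\bfy$: it involves only $\ker(\bfD_H)$ and the existence of a solution supported on $F$, both of which Lemma~\ref{lem:existence of solution with supp = F} furnishes unchanged. Thus, once the reduction in the first paragraph is in place, the remaining work is to transcribe the dual-graph estimates of \cite{Soltan18TCNS}.
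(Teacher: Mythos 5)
Your proposal is correct and follows essentially the same route as the paper: both substitute Lemma~\ref{lem:existence of solution with supp = F} for the particular-solution lemma of \cite{Soltan18TCNS}, observe that the uniqueness arguments concern only the solution space of $\bfD_H\bfx=\bfy$ (hence are insensitive to the modified right-hand side), and then defer to \cite{Soltan18TCNS} for the acyclic full-column-rank fact and the dual-graph $\ell_1$-uniqueness argument. Your direct proof of the forest rank fact and your null-space-property framing of the planar case are minor self-contained elaborations of what the paper cites, not a different method.
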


Special cases satisfying the second condition in Theorem~\ref{thm:localize failed links} include that (i) $H$ is a cycle in which majority of the links have not failed, and (ii) $H$ is a planar bipartite graph in which each cycle contains fewer failed links than non-failed links \cite{Soltan18TCNS}.

\section{Localizing Failed Links with Unknown Active Powers}\label{sec:Localizing Failed Links and Recover Active Powers}

Although providing strong theoretical guarantees, the solutions for localizing failed links given in Section~\ref{sec:Localizing Failed Links} are only applicable to small attacked areas with simple topologies (e.g., trees or cycles in which every node is connected to another node outside the attacked area). To deal with larger attacked areas for which $\bfDelta_H$ cannot be recovered by Lemma~\ref{lem:recover delta}, we investigate alternative solutions by jointly estimating the set of failed links $F$ and the changes in active power injections $\bfDelta_H$.
As in Section~\ref{sec:Localizing Failed Links}, we assume that the post-attack phase angles $\bftheta'$ are known, which can be either inferred or directly measured.\looseness=-1 


\subsection{Solution}

Our approach is to formulate the joint estimation problem as an optimization as follows.

\emph{Constraints:} Let $\bfx \in \{0,1\}^{|E| }$ be an indicator vector such that $x_e=1$ if and only if $e \in F$. Due to $\bfB = \bfD\bm{\Gamma}\bfD^T$ (see Table~\ref{tab:notation} for the definitions), we can write the post-attack admittance matrix as $\bfB' = \bfB - \bfD\bm{\Gamma}\text{diag}\{\bfx\}\bfD^T$, which implies \looseness=-1
\begin{align}\label{eq:pf_constraint}
    \bfDelta_H = \bfB_{H|G}(\bftheta-\bftheta') + \bfD_{H}\bm{\Gamma}_H\text{diag}\{\bfD_{G|H}^T \bftheta'\}\bm{x}_H,
\end{align}
where $\bfD_{G|H}\in \{-1,0,1\}^{|V|\times |E_H|}$ is the submatrix of the incidence matrix $\bfD$ only containing the columns corresponding to links in $H$. For simplicity, we define
\begin{align}\label{eq:tilde{D}}
\tilde{\bfD} := \bfD \bm{\Gamma} \text{diag}\{\bfD^T \bftheta'\}.
\end{align}
For link $e_k=(i,j)$, $(\bm{\tilde{D}})_{i,k} = -(\bm{\tilde{D}})_{j,k} = \frac{\theta_i'-\theta_j'}{r_{ij}}$, which indicates the post-attack power flow on link $e_k$ if it is operational.

Besides \eqref{eq:pf_constraint}, $\bfDelta_H$ is also constrained as
\begin{subequations}\label{eq:delta_constraint}
\begin{alignat}{2}
&p_v \ge {\Delta_v} \ge 0, &~~\forall v \in \left\{ {u\: |u \in V_H, p_u > 0} \right\}, \label{eq:const_valid_start}&\\
&p_v \le {\Delta_v} \le 0,&\forall v \in \left\{ {u\: |u \in V_H, p_u \le 0} \right\},& \label{eq:const_valid_load}\\
&\boldsymbol{1}^T\bfDelta = 0, \label{eq:const_sum0}&
\end{alignat}
\end{subequations}
which ensures that a generator/load bus will remain of the same type after the attack, and the total power is balanced. It is worth noting that \eqref{eq:const_sum0} is ensured by \eqref{eq:pf_constraint},  which implies that $\bm{1}^T\bfDelta_H-\bm{1}^T\bfB_{H|G}(\bftheta-\bftheta') = (\bm{1}^T\tilde{\bfD}_H)\bm{x}_H = 0$ since $\bm{1}^T\tilde{\bfD}_H = \bm{0}$ by definition \eqref{eq:tilde{D}}. This implies that any $\bfDelta_H$ satisfying \eqref{eq:pf_constraint} will satisfy $\bm{1}^T\bfDelta_H = \bm{1}^T\bfB_{H|G}(\bftheta-\bftheta') = \bm{1}^T\bfDelta^*_H$ ($\bfDelta^*_H$: the ground-truth load shedding values in $H$), and thus satisfy \eqref{eq:const_sum0}. Hence, we will omit \eqref{eq:const_sum0} in the sequel.

\emph{Objective:}
The problem of failure localization aims at finding a set $\hat{F}$ that is as close as possible to the set $F$ of failed links, while satisfying all the constraints. The solution is generally not unique, e.g., if both endpoints of a link $l\in E_H$ are disconnected from $\bar{H}$ after the attack, 
then the status of $l$ will have no impact on any observable variable, and hence cannot be determined.
To resolve this ambiguity, we set our objective as
using the fewest failed links to satisfy all the constraints. This idea has been applied to failure localization in power grid in various forms~\cite{Soltan18TCNS,chen2014efficient}.
Mathematically, the problem is formulated as
\begin{subequations}\label{eq1:L0_binary_original}
\begin{alignat}{2}
(\text{P0}) \quad &\min_{\boldsymbol{x}_H, \bfDelta_H}  \bm{1}^T{\bfx}_H &\\
\mbox{s.t.} \quad
&\eqref{eq:pf_constraint}, \eqref{eq:const_valid_start}-\eqref{eq:const_valid_load},&\\
&x_e\in \{0,1\},~~~\forall e \in E_H,& \label{eq1:P0 xi}
\end{alignat}
\end{subequations}
where the decision variables are $\bfx_H$ and $\bfDelta_H$. 
Next, we characterize the complexity of (P0) ({see proof in appendix}).

\begin{lemma}\label{lem:complexity_p0}
The optimization (P0) is NP-hard. 
\end{lemma}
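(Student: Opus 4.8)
The plan is to establish NP-hardness by a polynomial-time reduction from a known NP-hard problem. The structure of (P0) is a cardinality-minimization problem over a binary vector $\bfx_H$ subject to a linear equality system (\ref{eq:pf_constraint}) coupling $\bfx_H$ to $\bfDelta_H$, together with box constraints (\ref{eq:const_valid_start})--(\ref{eq:const_valid_load}) on $\bfDelta_H$. First I would isolate the combinatorial core: since $\bfDelta_H$ is a continuous auxiliary variable whose only role is to absorb the residual $\bfB_{H|G}(\bftheta-\bftheta') + \tilde{\bfD}_H \bfx_H$ within the allowed generator/load ranges, the feasibility question reduces to asking which $0/1$ patterns $\bfx_H$ admit a valid $\bfDelta_H$. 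This makes (P0) a sparse-recovery / minimum-weight feasibility problem over $\{0,1\}^{|E_H|}$, which is the natural habitat of reductions from problems such as \textbf{minimum-cardinality exact cover}, \textbf{3-SAT}, or the \emph{minimum-weight solution to a linear system over the integers}.

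The key steps, in order, are: (i) choose the source problem — I expect the cleanest route is a reduction from a variant of \emph{set cover} or \emph{minimum $0/1$ linear feasibility} (e.g.\ the problem of deciding whether a linear system $\bfA\bfx = \bfb$ has a $0/1$ solution of weight at most $k$, which is NP-hard); (ii) given an instance of the source problem, construct in polynomial time a power-grid topology $G$, an attacked area $H$, pre- and post-attack phase angles $\bftheta,\bftheta'$, and reactances so that the matrix $\tilde{\bfD}_H = \bfD_H \bm{\Gamma}_H \text{diag}\{\bfD_{G|H}^T\bftheta'\}$ and the vector $\bfB_{H|G}(\bftheta-\bftheta')$ encode the source instance's constraint matrix and target vector; (iii) engineer the power-injection bounds $p_v$ in (\ref{eq:const_valid_start})--(\ref{eq:const_valid_load}) to be loose enough that they impose no binding restriction (or tight enough to enforce the exact-cover structure, depending on the reduction); (iv) argue the \emph{completeness} and \emph{soundness} equivalence: the source instance is a YES-instance with a solution of size $\le k$ if and only if (P0) has optimal value $\le k$. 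Because the reduction and the verification of a candidate solution are both polynomial, NP-hardness follows.

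The main obstacle I anticipate is the \emph{gadget realizability} constraint: unlike an abstract linear system, here $\tilde{\bfD}_H$ is not an arbitrary matrix — it is forced to have the structure of an incidence matrix of a graph scaled by the (recoverable) post-attack flows $\frac{\theta_i'-\theta_j'}{r_{ij}}$, and in particular $\bm{1}^T\tilde{\bfD}_H = \bm{0}$ by the conservation property already noted in the text. One cannot freely prescribe an arbitrary constraint matrix; instead the reduction must build a physically consistent grid whose incidence structure and chosen $\bftheta'$ values simultaneously realize the desired coefficients while respecting the fixed $\{-1,0,+1\}$ entries of $\bfD$ and the sign/type constraints on each bus. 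Threading this needle — making the graph-theoretic and power-flow feasibility constraints cooperate so that the combinatorial hardness of the source problem survives intact — is the delicate part, and is where most of the construction effort will go.
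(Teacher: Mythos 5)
There is a genuine gap: your proposal is a reduction \emph{strategy}, not a proof. You correctly identify the central obstacle --- that $\tilde{\bfD}_H$ cannot be an arbitrary constraint matrix but must be a graph incidence matrix scaled by physically consistent flows with $\bm{1}^T\tilde{\bfD}_H=\bm{0}$ --- but you then explicitly defer its resolution (``where most of the construction effort will go''). That deferred construction \emph{is} the entire content of the lemma's proof; everything else in your outline (pick a source problem, build a gadget, argue completeness and soundness) is boilerplate that applies to any NP-hardness claim. Without a concrete gadget you have not shown that the hardness of your source problem survives the realizability constraints, which is precisely the point in doubt.

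The paper resolves this with a reduction from \emph{subset sum} via a star-shaped $H$: a hub $u_0$ with $p_{u_0}=0$ and $\theta'_{u_0}=0$, spokes $u_i$ with $p_{u_i}=-f_i$, $\theta'_{u_i}=-f_i$, unit reactances, and one boundary link to a node $v\in V_{\bar H}$ with $\theta'_v=\sum_i f_i - T$. The trick that threads your needle is twofold. First, setting $p_{u_0}=0$ makes the two inequalities \eqref{eq:const_valid_start}--\eqref{eq:const_valid_load} at the hub collapse to the \emph{equality} $\Delta_{u_0}=0$, which via \eqref{eq:pf_constraint} (and pre-attack consistency $\bfB_{u_0|G}\bftheta=p_{u_0}=0$) reads $\tilde{\bfD}_{H,u_0}\bfx_H=\bfB_{u_0|G}\bftheta'$, i.e., exactly $\sum_{i} f_i x_i = T$; the boundary link supplies the target offset $T$ without adding a decision variable. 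Second, each spoke constraint reduces to $-f_i\le -f_i x_i\le 0$, which is vacuous for any $x_i\in\{0,1\}$, so the injection bounds impose nothing beyond the hub equality. Note also that the paper proves something simpler and stronger than what you aim for: mere \emph{feasibility} of (P0) is NP-hard to decide (subset sum is YES iff the instance is feasible), so no budget-$k$ bookkeeping on the objective is needed at all. If you want to salvage your route via minimum-weight $0/1$ linear feasibility, you would still have to exhibit a gadget of this kind, and you would be doing strictly more work than necessary.
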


By relaxing the integer constraint (\ref{eq1:P0 xi}), (P0) is relaxed into
\begin{subequations}\label{eq1:L1_binary_load}
\begin{alignat}{2}
(\text{P1}) \quad &\min_{\boldsymbol{x}_H, \bfDelta_H}  \bm{1}^T{\bfx}_H  & \label{P1: obj}\\
\mbox{s.t.} \quad
&\eqref{eq:pf_constraint}, \eqref{eq:const_valid_start}-\eqref{eq:const_valid_load},&\\
&{\rm{       }}{\bf{0}} \le {{\bfx}_H} \le {\bf{1}}. \label{eq5: const_x_range}&
\end{alignat}
\end{subequations}
where ${\bf{0}} \le \bfx_H \le {\bf{1}}$ denotes element-wise inequality. The problem (P1) is a linear program (LP) which can be solved in polynomial time. Based on (P1), we propose an algorithm for localizing the failed links, given in Algorithm~\ref{alg: fedgeDet}, where the input parameter $\eta\in (0, 1)$ is a threshold for rounding the factional solution of $\bfx_H$ to an integral solution ($\eta=0.5$ in our experiments).
{We will analyze how $\eta$ affects the trade-off between miss rate and false alarm rate of Algorithm~\ref{alg: fedgeDet} at the discussion after Theorem~\ref{lem:no_fa_hyper_direc} in Section~\ref{sec: analysis_sec5}}.

\begin{algorithm}\label{alg: fedgeDet}
\vspace{-.05em}
\SetAlgoLined
\SetKwFunction{Fmain}{FailEdgeDetection}
\SetKwInOut{Input}{input}\SetKwInOut{Output}{output}
\KwIn{$\bfB, \bfP, \bfDelta_{\bar{H}}, \bftheta, \bftheta', \boldsymbol{D}, \eta$ 
}
\KwOut{$\hat{F}$}
Solve the problem (P1) to obtain $\bfx_H$;\\
Return $\hat{F}=\{e:x_e \geq \eta\}$.
\caption{Failed Link Detection }
\end{algorithm}
\vspace{-1.5em}

\subsection{Analysis}\label{sec: analysis_sec5}

We now analyze when the proposed algorithm can correctly localize the failed links. 
In the sequel, $\bfDelta^*_H$ denotes the ground-truth load shedding values in $H$ and $\bfx^*_H$ denotes the ground-truth failure indicators. 

According to \eqref{eq:delta_constraint}, we decompose $V_H$ into $V_{H,L}$ for nodes with $p_v\le 0$ and $V_{H,S}$ for the rest. Define $E_1\subseteq E_H$ as the set of links that operate normally after failure, and $F\subseteq E_H$ as the failed links. We make the following assumption:

\begin{assumption}\label{as:island_allLoad}  As in \cite{Soltan18TCNS}, we assume that for each link $(s,t)\in E_H$, $\theta_s' \ne \theta_t'$, as otherwise the link will carry no power flow and hence its status cannot be identified\footnote{This assumption essentially means that we will ignore the existence of such links in failure localization.}.
\end{assumption}

\subsubsection{Main Results}

First, we simplify (P1) into an equivalent but simpler optimization problem. To this end, we combine the decision variables $\bfDelta_H$ and $\bfx_H$ of (P1) into a single vector $\bfy_H=[\bfDelta_H^T, \bfx_H^T]^T\in \mathbb{R}^{(|E_H|+|V_H|) }$ (where $[A, B]$ denotes horizontal concatenation), and explicitly represent the solution to $\bfy_H$ that satisfies \eqref{eq:pf_constraint}. Notice that \eqref{eq:pf_constraint} can be written as $[\bm{I}_{|V_H|}, -\tilde{\bfD}_H]\bfy_H = \bfB_{H|G}(\bftheta-\bftheta')$ ($\bm{I}_{|V_H|}$: the $|V_H|\times |V_H|$ identity matrix). 
The ground-truth solution $\bfy_H^*=[(\bfDelta_H^*)^T, (\bfx_H^*)^T]^T$ certainly  satisfies \eqref{eq:pf_constraint}. Next, consider the null space of $[\bm{I}_{|V_H|}, -\tilde{\bfD}_H]$, whose dimension is $|E_H|$. It is easy to verify that  $[\tilde{\bm{d}}_{e}^T,\bm{u}^T_e ]^T$ ($e\in E_H$) are $|E_H|$ independent vectors spanning the null space of $[\bm{I}_{|V_H|}, -\tilde{\bfD}_H]$, where $\bm{\tilde{d}}_{e}$ is the column vector of $\bm{\tilde{D}}_H$ corresponding to link $e$, and $\bm{u}_e$ is a unit vector in $\mathbb{R}^{|E_H| }$ with the $e$-th element being $1$ and the other elements being $0$. Therefore, any $\bfy_H$ satisfying \eqref{eq:pf_constraint} can be expressed as \looseness=-1
\begin{align}\label{eq:general_soln}
{{\bfy}_H} = \left[ {\begin{array}{*{20}{c}}
{{\bf{\Delta }}_H^*}\\
{{\bfx}_H^*}
\end{array}} \right] + \sum_{e\in E_H} {{c_e}\left[ {\begin{array}{*{20}{c}}
{{{ \bm{\tilde{d}}_{e}}}}\\
{\bm{u}_e}
\end{array}} \right]},
\end{align}
{where $c_e$'s are the coefficients}. Based on the decomposition of $V_H$ into $V_{H,L}$ and $V_{H,S}$, $\tilde{\bfD}_H$ and $\bfDelta_H$ 
can be written as
\begin{subequations}
\begin{align}\label{eq:tildeD block_both}
\tilde{\bfD}_H &=
\begin{blockarray}{ccc}
{}&{\scriptstyle E_1} & {\scriptstyle F}  \\
\begin{block}{c[cc]}
{\scriptstyle V_{H,L}} & \tilde{\bfD}_{H,L,1} & \tilde{\bfD}_{H,L,F} \\
{\scriptstyle V_{H,S}} & \tilde{\bfD}_{H,S,1} & \tilde{\bfD}_{H,S,F} \\
\end{block}
\end{blockarray},\\
\bfDelta_H &=
\begin{blockarray}{cc}
\begin{block}{c[c]}
 \scriptstyle V_{H,L} & \bfDelta_{H,L}  \\
 \scriptstyle V_{H,S} & \bfDelta_{H,S} \\
\end{block}
\end{blockarray}.
\end{align}
\end{subequations}
Let $\tilde{\bfD}_{H,L}:=[\tilde{\bfD}_{H,L,1}, \tilde{\bfD}_{H,L,F}]$, $\tilde{\bfD}_{H,S}:=[\tilde{\bfD}_{H,S,1}, \tilde{\bfD}_{H,S,F}]$, and $\bm{c}:=(c_e)_{e\in E_H}\in \mathbb{R}^{|E_H| }$. Since $\bfDelta_{H,L}$ and $\bfDelta_{H,S}$ are constrained differently in \eqref{eq:const_valid_start} and \eqref{eq:const_valid_load}, we introduce $\bm{\Lambda}_L = [\bm{I}_{|V_{H,L}|}, \bm{0}]$ and $\bm{\Lambda}_S = [\bm{0}, \bm{I}_{|V_{H,S}|}]$ such that $\bfDelta_{H,L} =\bm{\Lambda}_L\bfDelta_H$, $\bfDelta_{H,S} =\bm{\Lambda}_S\bfDelta_H$, $\tilde{\bfD}_{H,L} = \bm{\Lambda}_L\tilde{\bfD}_{H}$ and $\tilde{\bfD}_{H,S} = \bm{\Lambda}_S\tilde{\bfD}_{H}$. According to \eqref{eq:general_soln}, for Algorithm~\ref{alg: fedgeDet} to correctly localize the failed links, it suffices to have $x^*_e + c_e \ge \eta$ for all $e\in F$ and $x^*_e + c_e < \eta$ for all {$e\in E_1$}. Equivalently, it suffices to ensure that the optimal solution $\bm{c}^*$ to the following optimization problem satisfies ${c}^*_e \ge \eta-1$ for all $e\in F$ and ${c}^*_e < \eta$ for all {$e\in E_1$}: 
\begin{subequations}\label{eq:primal_theo_both}
\begin{alignat}{2}
\min_{\boldsymbol{c}} \quad  &\bm{1}^T \bm{c}  \label{min_c: obj} \\
\mbox{s.t.} \quad
&\bm{\tilde{D}}_{H,L}\bm{c} \le -\bfDelta_{H,L}^*, \label{eq2:c_const_start}  \\
&-\bm{\tilde{D}}_{H,L}\bm{c} \le -(\bm{\Lambda}_L\bfP_H-\bfDelta_{H,L}^*), \label{eq2:c_const_p} \\
&-\bm{\tilde{D}}_{H,S}\bm{c} \le \bfDelta_{H,S}^*, \label{eq2:c_const_g0} \\
&\bm{\tilde{D}}_{H,S}\bm{c} \le \bm{\Lambda}_S\bfP_H-\bfDelta_{H,S}^*,\label{eq2:c_const_gP} \\
&-\bm{c} \le {\bfx}_H^*, \label{eq2:c_const_x0}\\
&\bm{c} \le \bm{1} - {\bfx}_H^*, \label{eq2:c_const_end}
\end{alignat}
\end{subequations}
{where \eqref{min_c: obj} is equivalent to \eqref{P1: obj}, \eqref{eq2:c_const_start}-\eqref{eq2:c_const_p} correspond to \eqref{eq:const_valid_load}, \eqref{eq2:c_const_g0}-\eqref{eq2:c_const_gP} correspond to \eqref{eq:const_valid_start}, \eqref{eq2:c_const_x0}-\eqref{eq2:c_const_end} correspond to \eqref{eq5: const_x_range}, and the change of variables $\bm{x}_H, \bfDelta_H$ into $\bm{c}$ based on \eqref{eq:general_soln} ensures the satisfaction of \eqref{eq:pf_constraint}.}
This equivalent formulation of (P1) will help to simplify our analysis by eliminating the equality constraint \eqref{eq:pf_constraint}. \emph{For notational simplicity, we will omit the subscript $_H$ 
in the sequel} unless it causes confusion.

Next, we use \eqref{eq:primal_theo_both} to analyze the accuracy of Algorithm~\ref{alg: fedgeDet}. Let $\hat{F}$ be the failed link set returned by Algorithm~\ref{alg: fedgeDet}. We first define $Q_m=F\setminus \hat{F}$ as the set of failed links that are not detected, and $Q_f = \hat{F}\setminus F$ as the set of operational links that are falsely detected as failed. Note that according to \eqref{eq:primal_theo_both}, a failed link $e\in F$ is missed if and only if $c^*_e < \eta-1$. Similarly, an operational link {$e\in E_1$} is falsely detected as failed if and only if $c^*_e \ge \eta$. To express this in a vector form, we define $\bm{W}_m \in \{0,1\}^{|Q_m|\times |E_H|}$ as a binary matrix, where for each $i=1,\ldots,|Q_m|$, {$(W_m)_{i,e} = 1$ if the $i$-th missed link is link $e$}  and thus we have $\bm{W}_m\bm{c}^* \le (\eta-1)\bm{1}$. 
Similarly, $\bm{W}_f \in \{0,1\}^{|Q_f|\times |E_H|}$ is defined such that {$(W_f)_{i,e} = 1$ if the $i$-th false-alarmed link is link $e$}, which leads to $-\bm{W}_f\bm{c}^* \le -\eta\bm{1}$. For ease of presentation, we define
\begin{subequations}\label{eq:inter_sub}
\begin{alignat}{2}
\bm{A}_D^T &:= [\tilde{\bfD}^T_{L}, -\tilde{\bfD}^T_{L}, -\tilde{\bfD}^T_{S}, \tilde{\bfD}^T_{S}] \in \mathbb{R}^{|E_H|\times 2|V_H|}, \\
\bm{A}_x^T &:= [-\bm{I}_{|E_H|}, \bm{I}_{|E_H|}]\in \mathbb{R}^{|E_H|\times 2|E_H|},\\
\bm{W}^T &:= [\bm{W}^T_m, -\bm{W}^T_f]\in \mathbb{R}^{|E_H|\times (|Q_m|+|Q_f|)},\\
\bm{g}_D^T&:=[-(\bfDelta_{L}^*)^T, (-\bfP_{L}')^T, (\bfDelta_{S}^*)^T, (\bfP_{S}')^T],\\
\bm{g}_x^T &:= [(\bfx^*)^T, \bm{1}^T-(\bfx^*)^T]\in \mathbb{R}^{1\times 2|E_H|},\\
\bm{g}_w^T &:= [(\eta-1)\bm{1}^T, -\eta\bm{1}^T]\in \mathbb{R}^{1\times (|Q_m|+|Q_f|)},
\end{alignat}
\end{subequations}
{where $\bfP_L'=\bfP_L-\bfDelta_L^*$ and $\bfP_S'=\bfP_S-\bfDelta_S^*$ denote the post-attack active power injections at $V_{H,L}$ and $V_{H,S}$.}
Then the constraints in \eqref{eq:primal_theo_both} can be written as $[\bm{A}_D^T, \bm{A}_x^T]^T \bm{c} \le [\bm{g}_D^T, \bm{g}_x^T]^T$, and the optimal solution must satisfy $\bm{W}\bm{c} \le \bm{g}_w$. 
The following observation is the foundation of our analysis. \looseness=-1

\begin{lemma}\label{lem:ground_alter_gale}
{A link $e\in F$ cannot be missed by Algorithm~\ref{alg: fedgeDet}  if for $Q_m = \{e\}$ and $Q_f = \emptyset$, there is a solution $\bm{z}\ge \bm{0}$ to}
\begin{subequations}\label{eq:alter_gale}
\begin{alignat}{2}
[\bm{A}_D^T, \bm{A}_x^T, \bm{W}^T, \bm{1}]\bm{z} = \bm{0},\label{eq:alter_gale_eq} \\
[\bm{g}_D^T, \bm{g}_x^T, \bm{g}_w^T, \bm{0}]\bm{z} < 0.\label{eq:alter_gale_ineq}
\end{alignat}
\end{subequations}
{Similarly, a link $e'\in E_1$ cannot be falsely detected as failed by Algorithm~\ref{alg: fedgeDet} if there exists a solution $\bm{z}\ge\bm{0}$ to \eqref{eq:alter_gale} where $\vW$ is constructed according to $Q_f = \{e'\}$ and $Q_m = \emptyset$.}

\end{lemma}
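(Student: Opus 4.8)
The plan is to prove both claims by contradiction, using a theorem of alternatives of Gale's type (a variant of Farkas' lemma): for any matrix $\bm{M}$ and vector $\bm{b}$, the system $\bm{M}\bm{c}\le\bm{b}$ has no solution if and only if there exists $\bm{z}\ge\bm{0}$ with $\bm{M}^T\bm{z}=\bm{0}$ and $\bm{b}^T\bm{z}<0$. The first step is to recognize that \eqref{eq:alter_gale} is precisely this second alternative for the stacked system $\bm{M}\bm{c}\le\bm{b}$ with $\bm{M}$ obtained by vertically stacking $\bm{A}_D$, $\bm{A}_x$, $\bm{W}$, and the objective row $\bm{1}^T$, and with $\bm{b}=[\bm{g}_D^T,\bm{g}_x^T,\bm{g}_w^T,0]^T$ (so that $\bm{M}^T=[\bm{A}_D^T,\bm{A}_x^T,\bm{W}^T,\bm{1}]$ as in \eqref{eq:alter_gale_eq}). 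Hence the hypothesized existence of $\bm{z}$ certifies that the system ``$\bm{A}_D\bm{c}\le\bm{g}_D$, $\bm{A}_x\bm{c}\le\bm{g}_x$, $\bm{W}\bm{c}\le\bm{g}_w$, and $\bm{1}^T\bm{c}\le 0$'' is infeasible.

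Next I would assume toward a contradiction that link $e\in F$ is missed, instantiating $Q_m=\{e\}$, $Q_f=\emptyset$, and let $\bm{c}^*$ be the optimizer of the equivalent LP \eqref{eq:primal_theo_both}. The blocks $\bm{A}_D\bm{c}^*\le\bm{g}_D$ and $\bm{A}_x\bm{c}^*\le\bm{g}_x$ hold automatically, being exactly the feasibility constraints \eqref{eq2:c_const_start}--\eqref{eq2:c_const_end}. The block $\bm{W}\bm{c}^*\le\bm{g}_w$ reduces to $c^*_e\le\eta-1$, which holds because a miss means $c^*_e<\eta-1$. The decisive remaining inequality is $\bm{1}^T\bm{c}^*\le 0$: it holds because $\bm{c}=\bm{0}$ is feasible for \eqref{eq:primal_theo_both} (via \eqref{eq:general_soln} it corresponds to the ground truth $\bm{y}_H^*=[(\bfDelta_H^*)^T,(\bm{x}_H^*)^T]^T$, which satisfies all constraints) and attains objective $0$, so the optimum obeys $\bm{1}^T\bm{c}^*\le 0$. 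Thus $\bm{c}^*$ would solve the system just shown to be infeasible -- a contradiction, so $e$ cannot be missed. The false-alarm claim is symmetric: taking $Q_f=\{e'\}$, $Q_m=\emptyset$ turns $\bm{W}\bm{c}^*\le\bm{g}_w$ into $-c^*_{e'}\le-\eta$, i.e. $c^*_{e'}\ge\eta$, which is exactly the condition for $e'\in E_1$ to be falsely flagged, and the identical contradiction rules it out.

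The step I expect to be most delicate, and would flag explicitly, is the role of the appended objective row $\bm{1}$ with right-hand side $0$: this is the device that converts the statement ``$\bm{c}^*$ is \emph{optimal}'' into a pure \emph{feasibility} statement, by exploiting that $\bm{c}=\bm{0}$ certifies an optimal value at most $0$. Aligning the dimensions and the sign/stacking conventions of $\bm{A}_D,\bm{A}_x,\bm{W}$ from \eqref{eq:inter_sub} so that \eqref{eq:alter_gale} is literally the Gale alternative of this augmented system is the bookkeeping that must be done carefully. A secondary subtlety is the strict-versus-nonstrict gap between the miss condition $c^*_e<\eta-1$ and its matrix encoding $\bm{W}\bm{c}^*\le\bm{g}_w$; since the strict inequality implies the non-strict one, the argument goes through, but this implication should be stated rather than glossed over.
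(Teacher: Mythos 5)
Your proposal is correct and follows essentially the same route as the paper's proof: it converts optimality into feasibility by appending the row $\bm{1}^T\bm{c}\le 0$ (justified by the ground-truth point $\bm{c}=\bm{0}$ being feasible with objective $0$), adds the miss/false-alarm row via $\bm{W}$, and invokes Gale's theorem of alternatives to derive the contradiction. The two subtleties you flag (the role of the appended objective row, and strict vs.\ nonstrict inequality in $\bm{W}\bm{c}^*\le\bm{g}_w$) are exactly the points the paper's argument relies on, and your direct verification that $\bm{c}^*$ satisfies the subsystem for $Q_m=\{e\}$, $Q_f=\emptyset$ is equivalent to the paper's remark that infeasibility of a subset of inequalities implies infeasibility of the full system.
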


{The proof is by contradiction: if $e\in F\setminus \hat{F}$, then for $\bm{W}$ corresponding to $Q_m =\{e\}$ and $Q_f = \emptyset$, there must be no $\bm{z}\geq \bm{0}$ satisfying \eqref{eq:alter_gale}; similar argument holds for $e' \in E_1$ by assuming $e' \in \hat{F}\setminus F$. See detailed proof in Appendix.}\looseness=-1


For ease of presentation, we will introduce a few notations as follows. Denote $\tilde{\bfD}_{u}$ as the row in $\tilde{\bfD}$ corresponding to node $u$, and $\tilde{D}_{u,e}$ as the entry in $\tilde{\bfD}_{u}$ corresponding to link $e$. Recall that as defined in \eqref{eq:tilde{D}}, if $e=(u,v)$, then
 ${\tilde{D}_{u,e} = (\theta'_u - \theta'_v) r_{uv}^{-1}}$.
{We decompose the left-hand-side of \eqref{eq:alter_gale_eq} into $\bm{A}_D^T\vz_{D}+\bm{A}_x^T[\vz_{x-},\vz_{x+}]+\bm{W}_m^T\vz_{w,m}+\bm{W}_f^T\vz_{w,f}+z_*\bm{1}$ such that its row corresponding to link $e$ can be written as
\begin{align}\label{eq5:expand_gale_eq}
\sum_{u\in V_H}\big(\tilde{D}_{u,e} z_{D,u} &- \tilde{D}_{u,e} z_{D,-u}\big) + \big( z_{x+,e}- z_{x-,e}\big) \nonumber \\
&+\mathbb{I}_{Q_m}(e) z_{w,m,e}-\mathbb{I}_{Q_f}(e) z_{w,f,e} + z_*.
\end{align}
Similarly, the left-hand-side of \eqref{eq:alter_gale_ineq} can be expanded into
\begin{align}\label{eq5:expand_gale_ineq}
\sum_{u\in V_H} \big(g_{D,u} z_{D,u} + g_{D,-u} &z_{D,-u}\big) + \sum_{e\in E_H}[ z_{x+,e} (1-x_e^*) + \nonumber \\
&z_{x-,e} x_e^*] +\vg_w^T\vz_w + z_*,
\end{align}
where $\vg_w^T\vz_w=\sum_{e\in E_H}[\mathbb{I}_{Q_m}(e) z_{w,m,e}(\eta-1)-\mathbb{I}_{Q_f}(e) z_{w,f,e}\eta]$, $g_{D,u} := -\Delta_u^*$ and $g_{D,-u} := -p_u'$ if $p_u \le 0$, whereas $g_{D,u} := p_u'$ and $g_{D,-u} := \Delta_u^*$ if $p_u>0$. Then, a solution $\vz\ge \bm{0}$ satisfies \eqref{eq:alter_gale} if $\forall e\in E_H$, we have \eqref{eq5:expand_gale_eq} equal to $0$ and \eqref{eq5:expand_gale_ineq} less than $0$.
}



Although Lemma~\ref{lem:ground_alter_gale} can already be utilized as recovery conditions, it does not explicitly characterize what type of links are guaranteed to be correctly identified. To this end, 
we will show that a link will satisfy the conditions in Lemma~\ref{lem:ground_alter_gale} (and can thus be correctly identified by Algorithm~\ref{alg: fedgeDet}) if its endpoints satisfy certain conditions. To make our conditions as general as possible, we introduce a generalization of node called \emph{hyper-node} as follows (a single node is also a hyper-node):

\begin{definition}\label{def:hyper_node}
A set of nodes $U \subseteq V_H$ is a hyper-node if they induce a connected subgraph before attack. 
\end{definition}

We define a few properties of a hyper-node $U$. Define $E_U$ as the set of links with exactly one endpoint in $U$, i.e, $E_U := \{e|e=(s,t)\in E_H,s\in U, t\notin U\}$. If $E_U\cap F \neq \emptyset$, we define
\begin{subequations}\label{eq:properties of hyper-node}
\begin{alignat}{3}
\tilde{D}_{U,e} &:= \sum_{u\in U}\tilde{D}_{u,e},\\
S_U &:= \{ e\in E_U\setminus F|\: \exists l \in E_U \cap F,  \tilde{D}_{U,l}\tilde{D}_{U,e} > 0\}, \\
f_{U,0} &:=\max_{e\in S_U}|\tilde{D}_{U,e}|\mbox{, where }f_{U,0} := 0 \mbox{ if }S_U = \emptyset,\\
    f_{U,1} &:=\min_{e\in E_U\cap F}|\tilde{D}_{U,e}|,\\
f_{U,g} &:= \hspace{-.25em}
\begin{cases}
{\sum_{u\in U}{g_{D,u}}\mbox{\ \ \ if } {\exists l \in {E_U} \cap F, {\tilde D}_{U,l} < 0} },\\
\sum_{u\in U}{g_{D, - u}} \mbox{\ otherwise.}\\
\end{cases}\label{eq:def_fug}
\end{alignat}
\end{subequations}

\begin{figure}[tb]
\vspace{-.5em}
\centering
\includegraphics[width=.6\linewidth]{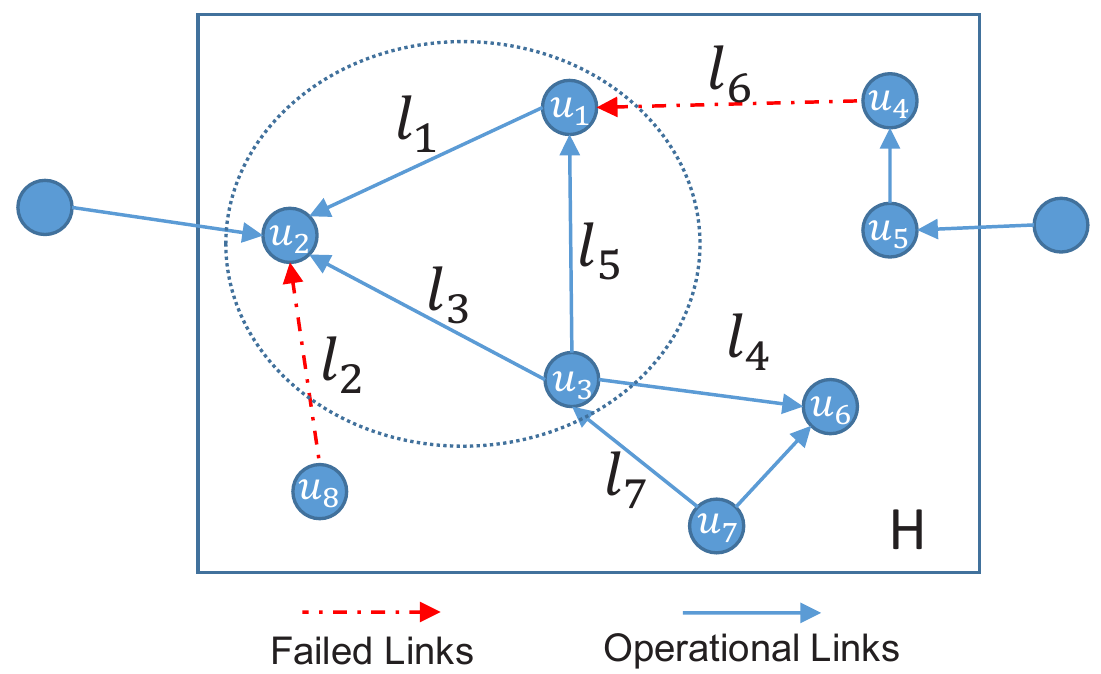}
\vspace{-1em}
\caption{An example of hyper-node (arrow denotes the direction of a power flow or a hypothetical power flow). } \label{fig:eg_hyper_node}
\vspace{-1em}
\end{figure}

\begin{example}
Consider an attacked area $H$ as shown in Fig.~\ref{fig:eg_hyper_node}, where blue circles denote nodes (buses) while the direction of each link indicates the direction of power flow\footnote{These may be hypothetical power flows, as a failed link {carries no flow}.}. Suppose that $F = \{l_2, l_6\}$ and all nodes are load buses. Nodes $u_1, u_2$ and $u_3$ form a hyper-node $U$, where $E_U = \{l_2,l_4,l_6,l_7\}$, $S_U = \{l_7\}$, $f_{U,0} = |\tilde{D}_{U,l_7}|$, $f_{U,1} = \min\{ |\tilde{D}_{U,l_2}|, |\tilde{D}_{U,l_6}| \} $ and $f_{U,g} = -\sum_{v\in U} \Delta_v^* $. $\tilde{D}_{U,l_1} = \tilde{D}_{u_1,l_1} + \tilde{D}_{u_2,l_1} = 0$ since $l_1\notin E_U$, while $\tilde{D}_{U,l_2} = \tilde{D}_{u_2,l_2} \ne 0$ since $l_2\in E_U$.
\end{example}

Based on these definitions and Lemma~\ref{lem:ground_alter_gale}, we are ready to present a condition under which a failed link $l\in F$ will not be missed by Algorithm~\ref{alg: fedgeDet}.

\begin{theorem}\label{lem:no_Miss_hyper_node}
A failed link $l \in F$ will be detected by Algorithm~\ref{alg: fedgeDet}, i.e., $l\in \hat{F}$, if there exists at least one hyper-node (say $U$) such that $l\in E_U$, for which the following conditions hold:\looseness=-1
\begin{enumerate}
    \item $\forall e, l\in E_U\cap F$, $\tilde{D}_{U,e}\tilde{D}_{U,l} > 0$,
    \item $S_U = \emptyset$, and
    \item $f_{U,g} + (\eta-1)|\tilde{D}_{U,l}|<0$.
\end{enumerate}
\end{theorem}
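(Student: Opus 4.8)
The plan is to verify the sufficient condition furnished by Lemma~\ref{lem:ground_alter_gale}: for $Q_m=\{l\}$ and $Q_f=\emptyset$ I will exhibit an explicit nonnegative certificate $\bm{z}\ge\bm{0}$ solving \eqref{eq:alter_gale}, working directly from the row-wise expansions \eqref{eq5:expand_gale_eq}--\eqref{eq5:expand_gale_ineq}. Once such a $\bm{z}$ is produced, Lemma~\ref{lem:ground_alter_gale} immediately yields $l\in\hat F$, so the entire argument reduces to a construction plus verification. I would set $z_*=0$ throughout (so any $z_*$ term in the inequality is immaterial) and pick a free scalar $\beta>0$ together with the sign $s:=-\mathrm{sign}(\tilde D_{U,l})$, which is well defined and nonzero by Assumption~\ref{as:island_allLoad}.

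The key idea is to aggregate the node dual variables over the hyper-node $U$. I would assign a common multiplier to the $U$-nodes according to $s$: if $s=+1$ put $z_{D,u}=\beta,\ z_{D,-u}=0$ for all $u\in U$, and if $s=-1$ put $z_{D,u}=0,\ z_{D,-u}=\beta$; all node duals outside $U$ are zero. Then the $z_D$-contribution to the row of a link $e$ in \eqref{eq5:expand_gale_eq} collapses to $\phi_e:=s\beta\sum_{u\in U}\tilde D_{u,e}=s\beta\,\tilde D_{U,e}$, which vanishes on every non-boundary link (antisymmetry kills both-endpoints-in-$U$ links, and no $U$-node touches both-endpoints-outside links) and equals $s\beta\,\tilde D_{U,e}$ only on the boundary links $e\in E_U$. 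The sign choice forces $\phi_l=-\beta|\tilde D_{U,l}|<0$; Condition~1 makes every failed boundary link share the sign of $\tilde D_{U,l}$, so $\phi_e=-\beta|\tilde D_{U,e}|\le0$ for $e\in E_U\cap F$; and Condition~2 ($S_U=\emptyset$) forces every operational boundary link to have $\tilde D_{U,e}$ of the opposite sign or zero, so $\phi_e=\beta|\tilde D_{U,e}|\ge0$ for $e\in E_U\setminus F$.

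I would then absorb each $\phi_e$ into the box-constraint duals so that \eqref{eq5:expand_gale_eq} holds with $\bm{z}\ge\bm{0}$ while contributing nothing to \eqref{eq5:expand_gale_ineq}: for operational boundary links ($x_e^*=0$) set $z_{x-,e}=\phi_e,\ z_{x+,e}=0$; for the other failed boundary links ($x_e^*=1,\ e\neq l$) set $z_{x+,e}=-\phi_e,\ z_{x-,e}=0$; for the target set $z_{x\pm,l}=0$ and $z_{w,m,l}=-\phi_l=\beta|\tilde D_{U,l}|\ge0$; and all non-boundary duals are zero. A direct check confirms every row of \eqref{eq5:expand_gale_eq} is zero and $\bm{z}\ge\bm{0}$. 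Crucially, the nonzero box dual is always paired with a vanishing coefficient in \eqref{eq5:expand_gale_ineq} (namely $x_e^*=0$ on operational links, $1-x_e^*=0$ on failed links), so the whole $z_x$-block contributes $0$ to the inequality.

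The final step is to evaluate \eqref{eq5:expand_gale_ineq}. The $z_D$-block contributes $\beta\sum_{u\in U}g_{D,u}$ when $\tilde D_{U,l}<0$ and $\beta\sum_{u\in U}g_{D,-u}$ when $\tilde D_{U,l}>0$, which is exactly the case split in the definition \eqref{eq:def_fug} of $f_{U,g}$, hence equals $\beta f_{U,g}$; the $z_x$-block contributes $0$; and $z_{w,m,l}$ contributes $(\eta-1)z_{w,m,l}=(\eta-1)\beta|\tilde D_{U,l}|$. Thus the inequality left-hand side is $\beta\bigl(f_{U,g}+(\eta-1)|\tilde D_{U,l}|\bigr)$, which is strictly negative by Condition~3 for every $\beta>0$, so $\bm{z}$ solves \eqref{eq:alter_gale} and $l$ cannot be missed. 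I expect the main obstacle to be the \emph{design} of this certificate rather than any calculation: the decisive insights are that the right aggregation is the net hypothetical flow $\tilde D_{U,e}$ across the boundary of $U$, that the correct dual block ($z_{D,u}$ versus $z_{D,-u}$) is forced by $\mathrm{sign}(\tilde D_{U,l})$ precisely so as to reproduce the case split defining $f_{U,g}$, and that Conditions~1--2 are exactly what guarantee every boundary slack can be routed onto a box dual whose inequality coefficient vanishes.
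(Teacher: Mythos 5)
Your proposal is correct and follows essentially the same route as the paper's proof: you construct the identical dual certificate for Lemma~\ref{lem:ground_alter_gale} with $Q_m=\{l\}$, $Q_f=\emptyset$ (the paper's assignment is your construction with $\beta=1$), with the same sign choice on the $z_{D,u}$ versus $z_{D,-u}$ block reproducing the case split in \eqref{eq:def_fug}, the same absorption of boundary-link residuals into $z_{x\pm}$ duals whose coefficients $x_e^*$ or $1-x_e^*$ vanish in \eqref{eq5:expand_gale_ineq}, and the same final evaluation $f_{U,g}+(\eta-1)|\tilde D_{U,l}|<0$. Your write-up is in fact slightly more systematic than the paper's (explicit $\phi_e$ bookkeeping, explicit note that $\tilde D_{U,l}\neq 0$ follows from Assumption~\ref{as:island_allLoad}, and the observation that $z_*=0$ means the optimality constraint goes unused), but the substance is the same.
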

\begin{proof}
We will prove by showing that there is {a solution to \eqref{eq:alter_gale} for $Q_f = \emptyset$ and $Q_m = \{l\}$ where $l\in E_U$}. We prove this by directly constructing a solution $\bm{z}$ for \eqref{eq:alter_gale} as follows: $\forall u\in U$, if $\tilde{D}_{U,l} < 0$, set $z_{D,u} = 1$; otherwise, set $z_{D,-u} = 1$. Set $z_{w,m,l} = |\tilde{D}_{U,l}|$, $z_{x-,e'} = |\tilde{D}_{U,e'}|$ for $e'\in E_U\setminus F$, $z_{x+,e} = |\tilde{D}_{U,e}|$ for $e\in E_U\cap F \setminus \{l\}$, 
and other entries of $\bm{z}$ to $0$. Note that $(x^*)_{e'} = 0, \forall e'\in E_U \setminus F$, and  $(1-x^*)_{e} = 0, \forall e\in E_U\cap F$. Then, we will demonstrate why \eqref{eq:alter_gale} is satisfied under this assignment of $\bm{z}$. First, {\eqref{eq5:expand_gale_eq} for link $l$} is expanded as $ -|\tilde{D}_{U,l}| + z_{w,m,e} = 0$, and {\eqref{eq5:expand_gale_eq} for $e\in F\setminus \{l\}$} is expanded as $-|\sum_{u\in U}\tilde{D}_{u,e}| + z_{x+,e} = -|\tilde{D}_{U,e}| + z_{x+,e} = 0$ due to condition~1). Second, since $S_U = \emptyset$, for all $e' \in  E_U\setminus F$, the corresponding row in \eqref{eq:alter_gale_eq} is expanded into $|\tilde{D}_{U,e'}| - z_{x-,e'}= 0$. 
Other rows of \eqref{eq:alter_gale_eq} holds trivially since they only involve the zero-entries in the constructed $\bm{z}$. Thus, \eqref{eq:alter_gale_eq} holds under this assignment. As for \eqref{eq:alter_gale_ineq}, its left-hand-side can be expanded as $f_{U,g} + (\eta-1)|\tilde{D}_{U,l}|<0$ due to condition~3). According to Lemma~\ref{lem:ground_alter_gale}, $l\in E_U\cap F$ will not be missed, which completes the proof.
\end{proof}

Based on similar arguments, the following condition can guarantee that an operational link {$l\in E_1$} will not be falsely detected by Algorithm~\ref{alg: fedgeDet} ($l \notin \hat{F}$). For notational simplicity, we first extend the definition of $f_{U,g}$ to a hyper-node $U$ with $E_U\cap F = \emptyset$:
\begin{align}
{f_{U,g}} := \left\{ {\begin{array}{*{20}{ll}}
{\sum_{u\in U}{g_{D,u}}\mbox{\ \ \ if } {\exists l \in {E_U} \setminus F, {\tilde D}_{U,l} > 0} },\\
\sum_{u\in U}{g_{D, - u}} \mbox{\ otherwise.} 
\end{array}} \right.
\end{align}

\begin{theorem}\label{lem:no_fa_hyper_direc}
An operational link {$l\in E_1$} will not be detected (as failed) by Algorithm~\ref{alg: fedgeDet}, i.e., $l\notin \hat{F}$, if there exists at least one hyper-node (say $U$) such that $l\in E_U$, for which the following conditions hold:\looseness=0
\begin{enumerate}
    \item 
    {$\forall l, l'\in E_U\cap E_1:$} $ \tilde{D}_{U,l}\tilde{D}_{U,l'} > 0$,
    \item $S_U = \emptyset$ if $E_U\cap F \ne \emptyset$, and
    \item $f_{U,g}-\eta|\tilde{D}_{U,l}|<0$.
\end{enumerate}
\end{theorem}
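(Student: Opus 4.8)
The plan is to reuse the constructive template from the proof of Theorem~\ref{lem:no_Miss_hyper_node}, but now invoking the false-alarm half of Lemma~\ref{lem:ground_alter_gale}: it suffices to exhibit one vector $\bm{z}\ge\bm{0}$ solving \eqref{eq:alter_gale} for $Q_f=\{l\}$ and $Q_m=\emptyset$. Because $l\in E_1$, the variable $z_{w,f,l}$ enters \eqref{eq5:expand_gale_eq} with coefficient $-1$ (rather than the $+1$ of $z_{w,m}$ in the miss case) and enters \eqref{eq5:expand_gale_ineq} with weight $-\eta$; this sign flip is exactly what turns the miss condition $f_{U,g}+(\eta-1)|\tilde{D}_{U,l}|<0$ into the false-alarm condition~3), $f_{U,g}-\eta|\tilde{D}_{U,l}|<0$.

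First I would orient the node-dual variables on $U$ by $\operatorname{sign}(\tilde{D}_{U,l})$: set $z_{D,u}=1$ for every $u\in U$ when $\tilde{D}_{U,l}>0$, and $z_{D,-u}=1$ for every $u\in U$ otherwise, with all remaining $z_{D,\cdot}=0$ and $z_*=0$. With this choice the $\bm{A}_D$-contribution to the row of a link $e$ in \eqref{eq5:expand_gale_eq} is $\operatorname{sign}(\tilde{D}_{U,l})\,\tilde{D}_{U,e}$, which vanishes whenever $e\notin E_U$ since $\tilde{D}_{U,e}=\sum_{u\in U}\tilde{D}_{u,e}=0$ there; those rows of \eqref{eq:alter_gale_eq} then hold trivially with all slacks zero.

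Next I would cancel the rows indexed by $E_U$ with the slacks $z_{x\pm}$ and $z_{w,f,l}$, choosing them so they do not disturb \eqref{eq5:expand_gale_ineq}. For $l$ the contribution is $|\tilde{D}_{U,l}|$, cancelled by $z_{w,f,l}=|\tilde{D}_{U,l}|$. For the other operational links $e'\in E_U\cap E_1\setminus\{l\}$, condition~1) forces $\tilde{D}_{U,e'}$ to share the sign of $\tilde{D}_{U,l}$, so the contribution is $+|\tilde{D}_{U,e'}|$ and I cancel it with $z_{x-,e'}=|\tilde{D}_{U,e'}|$; since $x^*_{e'}=0$, the term $z_{x-,e'}x^*_{e'}$ disappears from \eqref{eq5:expand_gale_ineq}. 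For the failed links $e\in E_U\cap F$ (relevant only when $E_U\cap F\ne\emptyset$), condition~2), i.e.\ $S_U=\emptyset$, forces $\tilde{D}_{U,e}$ to have the opposite sign, so the contribution is $-|\tilde{D}_{U,e}|$ and I cancel it with $z_{x+,e}=|\tilde{D}_{U,e}|$; since $x^*_e=1$, the term $z_{x+,e}(1-x^*_e)$ likewise disappears. This verifies \eqref{eq:alter_gale_eq}, and because every surviving slack multiplies a vanishing coefficient in \eqref{eq5:expand_gale_ineq}, evaluating the latter leaves only the $\bm{A}_D$-aggregate $\sum_{u\in U}g_{D,u}$ or $\sum_{u\in U}g_{D,-u}$ plus the $\vg_w$-term $-\eta|\tilde{D}_{U,l}|$.

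The main obstacle---the only genuinely new bookkeeping relative to Theorem~\ref{lem:no_Miss_hyper_node}---is showing that this $\bm{A}_D$-aggregate equals $f_{U,g}$ under \emph{both} definitions of $f_{U,g}$, including the extended one for $E_U\cap F=\emptyset$. I would dispatch this by a four-way case split on whether $E_U\cap F$ is empty, crossed with the sign of $\tilde{D}_{U,l}$: condition~1) and (when needed) $S_U=\emptyset$ pin down the signs of all links of $E_U$, and in each case the branch selected by the defining condition of $f_{U,g}$ matches the orientation chosen above, so the aggregate is exactly $f_{U,g}$. The left-hand side of \eqref{eq5:expand_gale_ineq} then collapses to $f_{U,g}-\eta|\tilde{D}_{U,l}|$, which is negative by condition~3); Lemma~\ref{lem:ground_alter_gale} then gives $l\notin\hat{F}$.
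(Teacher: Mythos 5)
Your proposal is correct and follows essentially the same route as the paper's own proof: the same explicit construction of $\bm{z}$ (orienting $z_{D,u}$/$z_{D,-u}$ by the sign of $\tilde{D}_{U,l}$, setting $z_{w,f,l}=|\tilde{D}_{U,l}|$ and the $z_{x\pm}$ slacks on $E_U$ with everything else zero) fed into the false-alarm half of Lemma~\ref{lem:ground_alter_gale} with $Q_f=\{l\}$ and $Q_m=\emptyset$. If anything, your orientation ($z_{D,u}=1$ when $\tilde{D}_{U,l}>0$) is the consistent one: the paper's opening construction sentence repeats the miss-case sign convention verbatim, while its final expansion of \eqref{eq:alter_gale_ineq} agrees with your choice.
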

\begin{proof}
Similar to the proof of Theorem~\ref{lem:no_Miss_hyper_node}, we will prove by showing that there is {a solution to \eqref{eq:alter_gale} for $Q_f = \{l\}$ and $Q_m = \emptyset$ where $l\in E_U$}. We construct the following $\bm{z}$: $\forall u\in U$, if $\tilde{D}_{U,l} < 0$, set $z_{D,u} = 1$; otherwise, set $z_{D,-u} = 1$. Set $z_{w,f,l} = |\tilde{D}_{U,l}|$, $z_{x-,e'} = |\tilde{D}_{U,e'}|$ for $e'\in E_U\setminus (F\cup \{l\})$, $z_{x+,e} = |\tilde{D}_{U,e}|$ for $e\in E_U\cap F$, and other entries of $\bm{z}$ to 0. Then, it is easy to check that \eqref{eq:alter_gale_eq} is satisfied. As for \eqref{eq:alter_gale_ineq}, considering that  $\bm{g}_x^T\bm{z}_x = \sum_{e'\in E_U\setminus F}x_{e'}^* + \sum_{e\in E_U\cap F}(1-x_{e}^*)= 0$ since $(x^*)_{e'} = 0,\: \forall e'\in E_U \setminus F$ and $(1-x^*)_{e} = 0, \: \forall e\in E_U\cap F$, the left-hand-side of \eqref{eq:alter_gale_ineq} can be expanded as
\begin{align}
\bm{g}_D^T\bm{z}_D + \bm{g}_x^T\bm{z}_x -\eta z_{w,f,l} = f_{U,g}-\eta |\tilde{D}_{u,l}| < 0,
\end{align}
where $f_{U,g} = \sum_{u\in U} g_{D,u}$ if $\tilde{D}_{U,l} > 0$ and $f_{U,g} = \sum_{u\in U} g_{D,-u}$ if $\tilde{D}_{U,l} < 0$, and the last inequality holds due to condition 3). Thus, according to Lemma~\ref{lem:ground_alter_gale}, $l \notin \hat{F}$, which completes the proof.
\end{proof}

\emph{Remark:} 
Theorems~\ref{lem:no_Miss_hyper_node} and \ref{lem:no_fa_hyper_direc} provide sufficient conditions for Algorithm~\ref{alg: fedgeDet} to correctly identify the status of a link $l$ based on the direction and magnitude of power flows around a hyper-node $U$ at the ``endpoint'' of $l$ (i.e., $l\in E_U$): 
\begin{enumerate}
    \item {The (hypothetical) power flows on all the links of the same status (failed or operational) around $U$ should be in the same direction, i.e., all going into or out of $U$ (condition~1); all links of the different status (if any) around $U$ should have opposite (hypothetical) power flow directions (condition~2); the magnitude of the (hypothetical) power flow on the link of interest (i.e., $l$) should be sufficiently large (condition~3).}
    \item {The value of $\eta$ can be tuned to control the trade-off between miss rate and false alarm rate. Specifically, the condition~3) of Theorem~\ref{lem:no_Miss_hyper_node} will be easier to satisfy with a smaller $\eta$. On the contrary, the condition~3) of Theorem~\ref{lem:no_fa_hyper_direc} will be easier to satisfy with a larger $\eta$. 
    } \looseness=-1
\end{enumerate}

However, the conditions in Theorems~\ref{lem:no_Miss_hyper_node} and \ref{lem:no_fa_hyper_direc} are stronger than necessary. 
In particular, $z_*$ is always $0$ in the proof of these theorems, which means that the optimality condition $\bm{1}^T\bm{c}^*\le 0$ formulated in \eqref{eq:feasibility_incorrect} has not been exploited. This results in the requirement of the condition ``$S_U=\emptyset$'' in these theorems.
To better characterize the accuracy of Algorithm~\ref{alg: fedgeDet}, we will establish a condition 
that exploits the optimality condition. To this end, we introduce a few further definitions as follows.

\begin{definition}
A hyper-node $U$ is a \emph{fail-cover hyper-node} if $E_U\cap F \ne \emptyset$ and $\forall e, l\in E_{U}\cap F$, $\tilde{D}_{U,e}\tilde{D}_{U,l} > 0$.
\end{definition}

Given a set of fail-cover hyper-nodes $T$, we divide $T$ into $T_n = \{U_{n_i}\}$ and $T_p = \{U_{p_i}\}$, such that each $U_{n_i} \in T_n$ satisfies $\tilde{D}_{U_{n_i},e} \leq 0$ for all $e\in F$, and each $U_{p_i} \in T_p$ satisfies $\tilde{D}_{U_{p_i},e} \ge 0$ for all $e\in F$. Then, 
we define:
\begin{subequations}\label{eq:def about fail-cover hyper-node}
\begin{align}
R_{U_i} &:= \frac{\max_{U\in T}\{ f_{U,1} \}}{f_{U_i,1}}, ~~~\forall U_i\in T, \\
\tilde{D}_{T,e} &:= \sum_{U_i\in T_n} R_{U_i}\tilde{D}_{U_i,e} + \sum_{U_i\in T_p} (-R_{U_i})\tilde{D}_{U_i,e}, \label{eq:def_DTe}\\
S_T &:= \{e'\notin F| \exists e\in F \mbox{ s.t. } \tilde{D}_{T,e'}\tilde{D}_{T,e} > 0 \},\\
f_{T,0} &:= \max_{e'\in S_T} |\tilde{D}_{T,e'}|,~~~f_{T,1} := \min_{e\in F}|\tilde{D}_{T,e}|, \\
f_{T,g} &:= \sum_{U_i\in T} R_{U_i} f_{U_i,g}.
\end{align}
\end{subequations}

Now we provide the recovery conditions when condition~2) in Theorems~\ref{lem:no_Miss_hyper_node} and \ref{lem:no_fa_hyper_direc} is relaxed ({see proof in appendix}).

\begin{theorem}\label{lem: final_lemma}
Assume that there exists a set of fail-cover hyper-nodes $T = \{U_i\}$ satisfying the following conditions:
\begin{enumerate}
    \item $\forall e \in F, |\tilde{D}_{T,e}| \ge f_{T,0}$, i.e., $f_{T,1} \ge f_{T,0}$, and
    \item $F \subseteq \bigcup_{U_i\in T} E_{U_i}$.
\end{enumerate}
Then, a failed link $e\in F$ will be detected by Algorithm~\ref{alg: fedgeDet} ($e \in \hat{F}$) if
\begin{align}\label{eq: no_miss_hyper_f}
f_{T,g} + (\eta-1)(|\tilde{D}_{T,e}| - f_{T,0}) < 0.
\end{align}
In addition, an operational link {$e'\in E_1$} will not be detected (as failed) by Algorithm~\ref{alg: fedgeDet}  ($e'\notin \hat{F}$) if
\begin{align}\label{eq: no_fa_hyper_f}
\left\{ {\begin{array}{*{20}{c}}
{{f_{T,g}} - \eta \left( {f_{T,1}} + {\left| {\tilde D_{{T},e'}} \right| } \right) < 0} \mbox{ if } e' \notin S_{T}\\
{{f_{T,g}} - \eta \left( {f_{T,1}} - {\left| {\tilde D_{{T},e'}} \right| } \right) < 0} \mbox{ if } e' \in S_{T}
\end{array}} \right.
\end{align}
\end{theorem}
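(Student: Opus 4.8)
The plan is to prove both halves of the theorem by explicitly exhibiting, for each target link, a Gale certificate $\bm{z}\ge\bm{0}$ solving \eqref{eq:alter_gale}, exactly in the style of the proofs of Theorems~\ref{lem:no_Miss_hyper_node} and \ref{lem:no_fa_hyper_direc}, but with two new ingredients: (i) I \emph{aggregate} the contributions of all fail-cover hyper-nodes in $T$ using the weights $R_{U_i}$, and (ii) I activate the hitherto-unused optimality column by setting the scalar $z_*>0$. The sign split of $T$ into $T_n$ and $T_p$ together with the weights is exactly what will make the flow part of the certificate collapse onto the aggregated quantity $\tilde{D}_{T,e}$ of \eqref{eq:def_DTe}, and $z_*$ is what lets me drop the ``$S_U=\emptyset$'' hypothesis flagged in the remark preceding the theorem.

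First I would fix the flow block: for every node $u$ set $z_{D,u}=\sum_{U_i\in T_n:\,u\in U_i}R_{U_i}$ and $z_{D,-u}=\sum_{U_i\in T_p:\,u\in U_i}R_{U_i}$. Substituting into \eqref{eq5:expand_gale_eq}, the flow term of the row of any link $e$ telescopes to $\sum_{U_i\in T_n}R_{U_i}\tilde{D}_{U_i,e}-\sum_{U_i\in T_p}R_{U_i}\tilde{D}_{U_i,e}=\tilde{D}_{T,e}$; and since each $T_n$ (resp.\ $T_p$) hyper-node picks up $g_{D,u}$ (resp.\ $g_{D,-u}$), which by the definition of $f_{U,g}$ equals $R_{U_i}f_{U_i,g}$ per hyper-node, the flow part of \eqref{eq5:expand_gale_ineq} evaluates to $\bm{g}_D^T\bm{z}_D=\sum_{U_i\in T}R_{U_i}f_{U_i,g}=f_{T,g}$. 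The sign split guarantees $\tilde{D}_{T,e}\le0$ for every $e\in F$, and condition~2 guarantees $\tilde{D}_{T,e}\ne 0$ there, so $f_{T,1}>0$.

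For the no-miss claim I take $Q_m=\{e\}$, $Q_f=\emptyset$, and set $z_*=f_{T,0}$. I then close each equality row using only the \emph{free} slack (the one whose inequality coefficient vanishes because $x^*$ is $0$ on $E_1$ and $1$ on $F$): $z_{w,m,e}=|\tilde{D}_{T,e}|-f_{T,0}$ on the target, $z_{x+}=|\tilde{D}_{T,e}|-f_{T,0}$ on the other failed links, $z_{x-}=f_{T,0}+|\tilde{D}_{T,e'}|$ on operational links with $e'\notin S_T$, and $z_{x-}=f_{T,0}-|\tilde{D}_{T,e'}|$ on operational links with $e'\in S_T$. Condition~1 ($f_{T,1}\ge f_{T,0}$) makes the failed-row entries nonnegative, while $f_{T,0}=\max_{e'\in S_T}|\tilde{D}_{T,e'}|$ makes the $S_T$-row entries nonnegative; adding the constant $z_*=f_{T,0}$ to every row is precisely what lets the $S_T$ rows close without being charged in \eqref{eq5:expand_gale_ineq}. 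The inequality then reduces to $f_{T,g}+(\eta-1)(|\tilde{D}_{T,e}|-f_{T,0})$, so \eqref{eq: no_miss_hyper_f} is exactly the condition making it strictly negative, and Lemma~\ref{lem:ground_alter_gale} finishes this half.

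The no-FA claim is symmetric: I take $Q_f=\{e'\}$, $Q_m=\emptyset$, and instead set $z_*=f_{T,1}$, the largest value keeping $z_{x+}=|\tilde{D}_{T,e}|-f_{T,1}\ge0$ on every failed row. Closing the target row with $z_{w,f,e'}=|\tilde{D}_{T,e'}|+f_{T,1}$ when $e'\notin S_T$ and $z_{w,f,e'}=f_{T,1}-|\tilde{D}_{T,e'}|$ when $e'\in S_T$ (nonnegative again by condition~1), the inequality collapses to $f_{T,g}-\eta(f_{T,1}\pm|\tilde{D}_{T,e'}|)$, matching the two branches of \eqref{eq: no_fa_hyper_f}. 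I expect the main obstacle to be the bookkeeping of the previous two paragraphs rather than any single idea: verifying that the telescoping to $\tilde{D}_{T,e}$ and to $f_{T,g}$ survives overlapping hyper-nodes and failed links shared between a $T_n$ and a $T_p$ hyper-node (both contributions remain $\le0$, so nothing cancels), and checking nonnegativity of \emph{every} constructed entry under the single inequality $f_{T,1}\ge f_{T,0}$ --- the choice $z_*=f_{T,0}$ (resp.\ $f_{T,1}$) is exactly the threshold at which this nonnegativity holds while the objective bound stays tight.
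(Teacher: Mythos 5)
Your proposal is correct and follows essentially the same route as the paper's appendix proof: the same aggregated Gale certificate $z_{D,v}=\sum_{U_i\in T_n}\mathbb{I}_{U_i}(v)R_{U_i}$, $z_{D,-v}=\sum_{U_i\in T_p}\mathbb{I}_{U_i}(v)R_{U_i}$ (so that rows telescope to $\tilde{D}_{T,e}$ and the objective part to $f_{T,g}$), the same slack assignments, and the same activations $z_*=f_{T,0}$ for the no-miss half and $z_*=f_{T,1}$ for the no-false-alarm half, with nonnegativity resting on $f_{T,1}\ge f_{T,0}$ exactly as in the paper. The only cosmetic lapses — writing $|\tilde{D}_{T,e}|$ where the per-link value $|\tilde{D}_{T,l}|$ is meant on the non-target failed rows, and leaving the $z_{x-,l}$ assignments for non-target operational links implicit in the false-alarm case — do not affect correctness.
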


{Besides serving as performance guarantees for Algorithm~\ref{alg: fedgeDet}, the conditions in Lemma~\ref{lem:ground_alter_gale} and Theorems~\ref{lem:no_Miss_hyper_node}--\ref{lem: final_lemma} can be used in real-time contingency analysis, where the operator can simulate a certain number of failures and test their detectability based on these conditions.  Moreover, these conditions can serve as the foundation for stronger conditions that can be tested in the field to verify the correctness of the failure localization results before crew dispatching~\cite{verifiable21arXiv}.} \looseness=-1

For ease of understanding, we visualize the relationship among the results in this section in Fig.~\ref{fig:relationship_theorems}. Specifically,
\begin{itemize}
    \item {Even if the condition in Lemma~\ref{lem:ground_alter_gale} is not satisfied for a link, Algorithm~\ref{alg: fedgeDet} may still identify its state  correctly, 
    because (P1) can have multiple optimal solutions while a typical LP solver can only return one of them, which can result in correct identification of the given link.  Thus, the cases satisfying Lemma~\ref{lem:ground_alter_gale} are only part of all the cases in which Algorithm~\ref{alg: fedgeDet} is correct.}
    \item The cases satisfying Lemma~\ref{lem:ground_alter_gale} can be classified into four categories according to whether they satisfy Theorem~\ref{lem:no_Miss_hyper_node}/\ref{lem:no_fa_hyper_direc} and/or Theorem~\ref{lem: final_lemma}.
    \item At a first glance, \eqref{eq: no_miss_hyper_f} in Theorem~\ref{lem: final_lemma} seems to include all the cases covered by Theorem~\ref{lem:no_Miss_hyper_node} due to the relaxed condition on $S_U$. However, \eqref{eq: no_miss_hyper_f} depends on the value of $f_{T,g}$, which is the weighted sum of all $f_{U_i,g}$ for $U_i\in T$. For a failed link $e$, it is possible to find a hyper-node $U$ with $f_{U,g}= 0$ such that Theorem~\ref{lem:no_Miss_hyper_node} holds, while \eqref{eq: no_miss_hyper_f} is violated. This argument applies similarly to a link $e'\in E_H\setminus F$. Thus, cases covered by Theorem~\ref{lem:no_Miss_hyper_node}/\ref{lem:no_fa_hyper_direc} and those covered by Theorem~\ref{lem: final_lemma} partially overlap.  A quantitative analysis will be shown in Fig.~\ref{fig:relationship}. 
\end{itemize}

\begin{figure}[tb]
\vspace{-.0em}
\centering
\includegraphics[width=.5\linewidth]{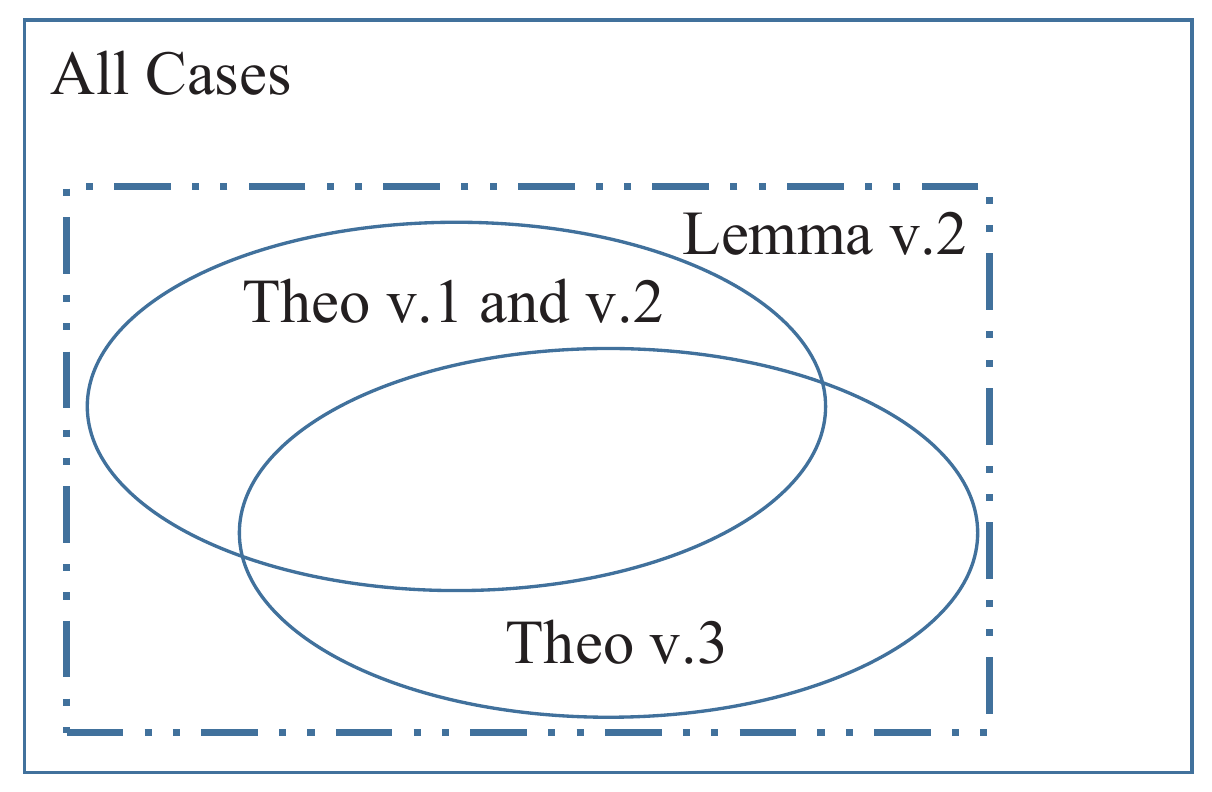}
\vspace{-.5em}
\caption{Relationship between cases covered by various theorems. } 
\label{fig:relationship_theorems}
\vspace{-1em}
\end{figure}

{Theorems~\ref{lem:no_Miss_hyper_node}--\ref{lem: final_lemma} shed light on what kind of $H$ allows accurate failure localization. For example, Theorem~\ref{lem: final_lemma} implies that Algorithm~\ref{alg: fedgeDet} will localize all the failures correctly if $\exists$ a vertex cover $V_0$ of $H$ (i.e., $\forall e\in E_H$ is incident to some $u\in V_0$), where $\forall u \in V_0$ satisfies
(i) $p_{u} = 0$, 
(ii) $f_{\{u\},1} > f_{\{u\},0}$, 
and (iii) $u$ is incident to at most one failed link. \looseness=-1
}

\begin{example}
Consider Fig.~\ref{fig:eg_hyper_node} as an example, where all the nodes in $H$ are load buses. If $\Delta_u^* = 0, \forall u\in V_2:= \{u_1,u_2,u_3,u_6,u_7\}$, then $T = \{V_2\}$ satisfies that $f_{T,g} = \sum_{u\in V_2}g_{D,u} = -\sum_{u\in V_2}\Delta_{u}^* = 0$ and $S_{T} = \emptyset$, which leads to the satisfaction of \eqref{eq: no_miss_hyper_f} for all the failed links and \eqref{eq: no_fa_hyper_f} for all the operational links. Meanwhile, $l_2$ and $l_6$ are guaranteed to be correctly identified through Theorem~\ref{lem:no_Miss_hyper_node} by setting $U=V_2$, since $S_U = \emptyset$ and $f_{U,g} = 0$. Furthermore, both $l_3$ and $l_5$ satisfy Theorem~\ref{lem:no_fa_hyper_direc} if $U=\{u_3,u_6,u_7\}$. As can be seen, some failed links ($l_2$ and $l_6$) can be covered by both Theorem~\ref{lem: final_lemma} and Theorem~\ref{lem:no_Miss_hyper_node}. Also, some operational links ($l_3$ and $l_5$) can be covered by both Theorem~\ref{lem: final_lemma} and Theorem~\ref{lem:no_fa_hyper_direc}. However, $l_1$ can only be covered by Theorem~\ref{lem: final_lemma}.
\end{example}

\subsubsection{Special Cases}

We can use these theorems to analyze the accuracy of Algorithm~\ref{alg: fedgeDet} in special cases of practical interest. \looseness=-1

\emph{No islanding:} We now examine the accuracy of Algorithm~\ref{alg: fedgeDet} in the special case that the grid remains connected after attack, which has been studied in \cite{Soltan18TCNS}. It is worthwhile to analyze 
this case (where $\bfDelta^*=\bm{0}$) because Algorithm~\ref{alg: fedgeDet} assumes no prior knowledge of $\bfDelta^*$ ({see proof in appendix}).

\begin{corollary}\label{cor:cond_connected}
If the grid stays connected after failure, $H$ is acyclic, and $H$ contains either no load bus or no generator bus, then Algorithm~\ref{alg: fedgeDet} can correctly detect $F$, i.e., $\hat{F} = F$.
\end{corollary}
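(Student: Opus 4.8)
The plan is to verify the hypotheses of Theorem~\ref{lem:no_Miss_hyper_node} for every failed link and of Theorem~\ref{lem:no_fa_hyper_direc} for every operational link, so that $F\subseteq\hat{F}$ together with $\hat{F}\cap E_1=\emptyset$ yields $\hat{F}=F$. First I would record what the two extra hypotheses buy us. Since the grid stays connected after the attack, $\bfDelta^*=\bm{0}$, hence $\bfP'=\bfP$ and the coefficients in \eqref{eq:inter_sub} collapse: for a load bus ($p_u\le 0$) we get $g_{D,u}=-\Delta_u^*=0$ and $g_{D,-u}=-p_u'\ge 0$, whereas for a generator bus ($p_u>0$) we get $g_{D,u}=p_u'>0$ and $g_{D,-u}=\Delta_u^*=0$. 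Thus in either single-type regime exactly one of the two families $\{g_{D,u}\}_{u}$ and $\{g_{D,-u}\}_{u}$ is identically zero, and I will steer every application of the theorems into that zero family.

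The combinatorial heart of the argument is a choice of hyper-node that exploits acyclicity. For any link $l=(s,t)\in E_H$, removing $l$ from the tree component of the forest $H$ containing it splits that component into two subtrees; let $U$ be the node set of either subtree. Because $U$ induces a connected subgraph before the attack, it is a valid hyper-node (Definition~\ref{def:hyper_node}) with $l\in E_U$. The key observation is that, since $H$ is acyclic, $l$ is the only edge of $E_H$ with exactly one endpoint in $U$ (no other edge crosses the cut $(U,V_H\setminus U)$), so $E_U=\{l\}$. Moreover the two candidate choices of $U$ give $\tilde{D}_{U,l}=(\theta'_s-\theta'_t)/r_{st}$ with opposite signs, both nonzero by Assumption~\ref{as:island_allLoad}; hence for any prescribed sign I can select the subtree realizing it.

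With $E_U=\{l\}$, conditions~1) and~2) of both Theorem~\ref{lem:no_Miss_hyper_node} and Theorem~\ref{lem:no_fa_hyper_direc} hold trivially: there is at most one link of each status in $E_U$, and $S_U=\emptyset$. It remains to discharge condition~3), which is where the sign choice enters. For a failed link $l$ in the all-load case I would take $U$ with $\tilde{D}_{U,l}<0$, so that $f_{U,g}=\sum_{u\in U}g_{D,u}=0$ and condition~3) of Theorem~\ref{lem:no_Miss_hyper_node} reduces to $(\eta-1)|\tilde{D}_{U,l}|<0$, true for $\eta\in(0,1)$; in the all-generator case I take $U$ with $\tilde{D}_{U,l}>0$, so $f_{U,g}=\sum_{u\in U}g_{D,-u}=0$ instead. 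Symmetrically, for an operational link $l\in E_1$ I choose the side making $\tilde{D}_{U,l}>0$ (all-load) or $\tilde{D}_{U,l}<0$ (all-generator), again forcing $f_{U,g}=0$ and reducing condition~3) of Theorem~\ref{lem:no_fa_hyper_direc} to $-\eta|\tilde{D}_{U,l}|<0$. Consequently no failed link is missed and no operational link is falsely flagged, giving $\hat{F}=F$.

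I expect the only real obstacle to be justifying the identity $E_U=\{l\}$ cleanly, since this is exactly the step where acyclicity is indispensable: in a graph with cycles the cut could be crossed by several edges, and both conditions~1)--2) and the vanishing of $f_{U,g}$ would break. The single-type hypothesis together with $\bfDelta^*=\bm{0}$ is what lets me always route the chosen orientation into the zero family of $g$-coefficients, and Assumption~\ref{as:island_allLoad} is what guarantees that the two orientations of $l$ have strictly opposite signs, so that such a choice always exists.
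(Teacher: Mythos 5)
Your proof is correct, and while it reduces the corollary to Theorems~\ref{lem:no_Miss_hyper_node} and \ref{lem:no_fa_hyper_direc} exactly as the paper does (with the same sign-selection trick to land $f_{U,g}$ on the identically-zero family of $g$-coefficients, which is where ``connected after failure'' giving $\bfDelta^*=\bm{0}$ and the single-bus-type hypothesis enter), your hyper-node construction is genuinely different and simpler. The paper grows $U$ by a BFS from one endpoint of $l$, absorbing any node whose incident link would violate conditions~1)--2), and uses acyclicity only to guarantee that this absorption process terminates; the resulting $E_U$ may contain many links, and conditions~1)--2) hold by construction of the growth rule. You instead delete $l$ from its tree component and take $U$ to be one side of the resulting fundamental cut, so that $E_U=\{l\}$ and conditions~1)--2) hold \emph{vacuously} (with $\tilde{D}_{U,l}\neq 0$ by Assumption~\ref{as:island_allLoad}, and the two sides of the cut giving opposite signs so the required orientation is always available). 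Your cut argument is cleaner to verify and, as a bonus, only needs $l$ to be a \emph{bridge} rather than $H$ to be globally acyclic, so it directly recovers the paper's follow-up remark that in a cyclic $H$ (still with no islanding and a single bus type) any link not lying on a cycle is correctly classified; the paper reaches that remark by rerunning its BFS construction instead. One cosmetic inaccuracy in your closing comment: with several edges crossing the cut it is conditions~1)--2) and the free choice of orientation that fail, not the vanishing of the selected $g$-family per se --- though you are right that when all crossing failed links force the branch of \eqref{eq:def_fug} onto the nonzero family, $f_{U,g}=0$ can indeed be lost. This does not affect the proof itself.
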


{\emph{Remark:}} If the grid stays connected after failure, $H$ contains either no load bus or no generator bus, and $H$ contains cycles, then the status of a link $l\in E_H$ is guaranteed to be correctly identified if $l$ is not in any cycle. This is because in this case, we can construct a hyper-node satisfying the conditions in Theorem~\ref{lem:no_Miss_hyper_node} or \ref{lem:no_fa_hyper_direc} as in the proof of Corollary~\ref{cor:cond_connected}.

\emph{Islanding:}
Now, we study the case where the attacked area is decomposed into multiple islands. Suppose that the failures partition $H$ into $K$ islands $H_i = (V_i, E_i)$ ($i=1,\ldots,K$), where $V_H = \bigcup_{i=1}^K V_i$ 
and $E_H = (\bigcup_{i=1}^K E_i) \bigcup E_c$, with $E_c$ being the set of links between different islands. 
Then the following is implied by Theorem~\ref{lem: final_lemma} ({see proof in appendix}).

\begin{corollary}\label{col:no_gen_load}
Suppose that $F=E_c$. Let $E_{c,i} \subseteq E_c$ be the subset of failed links with one endpoint in $H_i$. Then, Algorithm~\ref{alg: fedgeDet} will correctly detect the failures ($\hat{F} = F$) if there exists a set $\mathcal{L} \subseteq \{H_i\}_{i=1}^K$ with $\bigcup_{H_i\in \mathcal{L}}E_{c,i} = E_c$, such that each $H_i\in \mathcal{L}$ satisfies the following conditions\footnote{See \eqref{eq:properties of hyper-node} {and Tabel~\ref{tab:notation}} for the definitions of notations. 
}:
\begin{enumerate}
    \item $\forall e, e'\in E_{c,i}$, $\tilde{D}_{V_i,e}\tilde{D}_{V_i,e'} > 0$,
    \item $\forall v\in V_i$ that is incident to a link in $l\in E_{c,i}$, $f_{\{v\},0} < f_{\{v\},1}$, and
    \item $\forall v\in V_i$ that is incident to a link in $l\in E_{c,i}$, $g_{D,v} = 0$ if $\tilde{D}_{v,l} < 0$, and $g_{D,-v} = 0$ if $\tilde{D}_{v,l} > 0$.
\end{enumerate}
\end{corollary}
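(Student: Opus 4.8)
The plan is to deduce $\hat F = F$ directly from Theorem~\ref{lem: final_lemma} by exhibiting a suitable set $T$ of fail-cover hyper-nodes. Since conditions~2) and 3) of the corollary are phrased per node, the natural choice is to let $T$ consist of the \emph{singleton} hyper-nodes $\{v\}$, one for each node $v$ that lies in some island $H_i\in\mathcal L$ and is incident to a failed link in $E_{c,i}$. First I would record the structural facts that make these singletons legitimate fail-cover hyper-nodes. For a failed link $e=(v,w)$ with $v\in V_i$ and $w\notin V_i$, only $v$ contributes to the island sum, so $\tilde D_{V_i,e}=\tilde D_{v,e}$; hence condition~1) ($\tilde D_{V_i,e}\tilde D_{V_i,e'}>0$ for $e,e'\in E_{c,i}$) forces all failed links incident to a common node $v$ to share the sign of $\tilde D_{v,\cdot}$, which is exactly the fail-cover requirement for $\{v\}$. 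The coverage hypothesis $\bigcup_{H_i\in\mathcal L}E_{c,i}=E_c=F$ then yields $F\subseteq\bigcup_{\{v\}\in T}E_{\{v\}}$, i.e. assumption~2) of Theorem~\ref{lem: final_lemma}, and also guarantees that every failed link has at least one endpoint in $T$.

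Next I would dispose of the easy half, $f_{T,g}=0$, which is what will drive the final inequalities. For $\{v\}\in T$ the failed links at $v$ all share one sign of $\tilde D_{v,\cdot}$; if it is negative then $\{v\}\in T_n$ and $f_{\{v\},g}=g_{D,v}$, while condition~3) (with $\tilde D_{v,l}<0$) gives $g_{D,v}=0$; if it is positive then $\{v\}\in T_p$ and $f_{\{v\},g}=g_{D,-v}=0$ by condition~3) with $\tilde D_{v,l}>0$. Either way $f_{\{v\},g}=0$, so $f_{T,g}=\sum_{U_i\in T}R_{U_i}f_{U_i,g}=0$. I would also observe that the two endpoints of any failed link belong to opposite classes $T_n,T_p$, so their contributions to $\tilde D_{T,e}$ reinforce; thus $\tilde D_{T,e}<0$ and $|\tilde D_{T,e}|\ge M:=\max_{U\in T}f_{U,1}$ for every $e\in F$, whence $f_{T,1}\ge M$.

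The crux is assumption~1) of Theorem~\ref{lem: final_lemma}, i.e. $f_{T,1}>f_{T,0}$, which must be extracted from the per-node condition~2) ($f_{\{v\},0}<f_{\{v\},1}$). Here I would bound $|\tilde D_{T,e'}|$ for an operational link $e'=(a,b)$ (necessarily internal to a single island) according to how many of $a,b$ lie in $T$. If neither does, $\tilde D_{T,e'}=0$ and $e'\notin S_T$. If exactly one does, say $a$, then $\tilde D_{T,e'}=\pm R_{\{a\}}\tilde D_{a,e'}$, and a short sign check shows that whenever $e'\in S_T$ one has $e'\in S_{\{a\}}$, so $|\tilde D_{a,e'}|\le f_{\{a\},0}<f_{\{a\},1}$ and therefore $|\tilde D_{T,e'}|<R_{\{a\}}f_{\{a\},1}=M$. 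If both $a,b\in T$ (then in the same class, by applying condition~1) within the island), the two contributions have opposite sign, $\tilde D_{T,e'}=(R_{\{a\}}-R_{\{b\}})\tilde D_{a,e'}$, and one verifies that $e'\in S_T$ only in the sub-case where the dominant factor is the $R$ of the node for which $e'$ is a same-direction operational link, again giving $|\tilde D_{T,e'}|<M$. This case analysis on $S_T$-membership versus the normalization factors $R_{U_i}$ is the main obstacle; its payoff is $f_{T,0}=\max_{e'\in S_T}|\tilde D_{T,e'}|<M\le f_{T,1}$, i.e. assumption~1).

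Finally I would conclude by substitution. With $f_{T,g}=0$ and $f_{T,0}<f_{T,1}$, inequality \eqref{eq: no_miss_hyper_f} collapses to $(\eta-1)(|\tilde D_{T,e}|-f_{T,0})<0$, which holds since $\eta-1<0$ and $|\tilde D_{T,e}|\ge f_{T,1}>f_{T,0}$; hence no failed link is missed. Likewise \eqref{eq: no_fa_hyper_f} collapses to $-\eta(f_{T,1}+|\tilde D_{T,e'}|)<0$ when $e'\notin S_T$ and to $-\eta(f_{T,1}-|\tilde D_{T,e'}|)<0$ when $e'\in S_T$; both are negative because $\eta>0$ and, in the second case, $|\tilde D_{T,e'}|\le f_{T,0}<f_{T,1}$. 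Thus no operational link is falsely detected, so $\hat F=F$.
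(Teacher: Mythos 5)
Your proposal is correct and follows essentially the same route as the paper's proof: the same construction of $T$ from singleton hyper-nodes at endpoints of links in $E_{c,i}$, the same case analysis (one versus both endpoints of an operational link in $T$, using the normalization $R_{\{v\}}f_{\{v\},1}=\max_{U\in T}f_{U,1}$ and the contradiction $\tilde{D}_{T,e'}\ge 0$ when the wrong factor dominates) to get $f_{T,0}<f_{T,1}$, condition~3) yielding $f_{T,g}=0$, and substitution into \eqref{eq: no_miss_hyper_f}--\eqref{eq: no_fa_hyper_f}. One trivial slip: you assert that \emph{both} endpoints of any failed link lie in opposite classes $T_n,T_p$, whereas the coverage hypothesis only guarantees at least one endpoint in $T$ (the other island need not be in $\mathcal{L}$), but this does not affect your argument since $|\tilde{D}_{T,e}|\ge \max_{U\in T}f_{U,1}$ holds in either case, exactly as in the paper.
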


\emph{Remark:} Corollary~\ref{col:no_gen_load} extends Theorem~V.1 in~\cite{yudi20SmartGridComm} in the sense that Corollary~\ref{col:no_gen_load} only requires hypothetical power flows on failed links to be in the same direction, while Theorem~V.1 of \cite{yudi20SmartGridComm} requires these power flows to have a specific direction.

\section{Performance Evaluation}\label{sec:Performance Evaluation}

We test our solutions on the Polish power grid (``Polish system - winter 1999-2000 peak'') \cite{zimmerman2019matpower} 
with $2383$ nodes and $2886$ links, 
where parallel links are combined into one link. We generate the attacked area $H$ by randomly choosing one node as a starting point and performing a breadth first search to obtain $H$ with a predetermined $|V_H|$. We then randomly choose $|F|$ links within $H$ to fail.
We vary $|V_H|$ and $|F|$ to explore different settings, and for each setting, we generate $70$ different $H$'s and $300$ different $F$'s per $H$. 
{In our experiments, $|V_H|$ is chosen to be large enough such that there are sufficiently many internal nodes in $H$ whose post-attack active power injections cannot be easily recovered, and there are sufficiently many candidate links to fail that can lead to island formation in the grid.}

We evaluate two types of metrics: (1) how often the recovery conditions are satisfied, and (2) how accurate our algorithm is when its recovery conditions are not necessarily satisfied. \looseness=-1

\begin{figure}
    \centerline{
    \includegraphics[width=.48\linewidth]{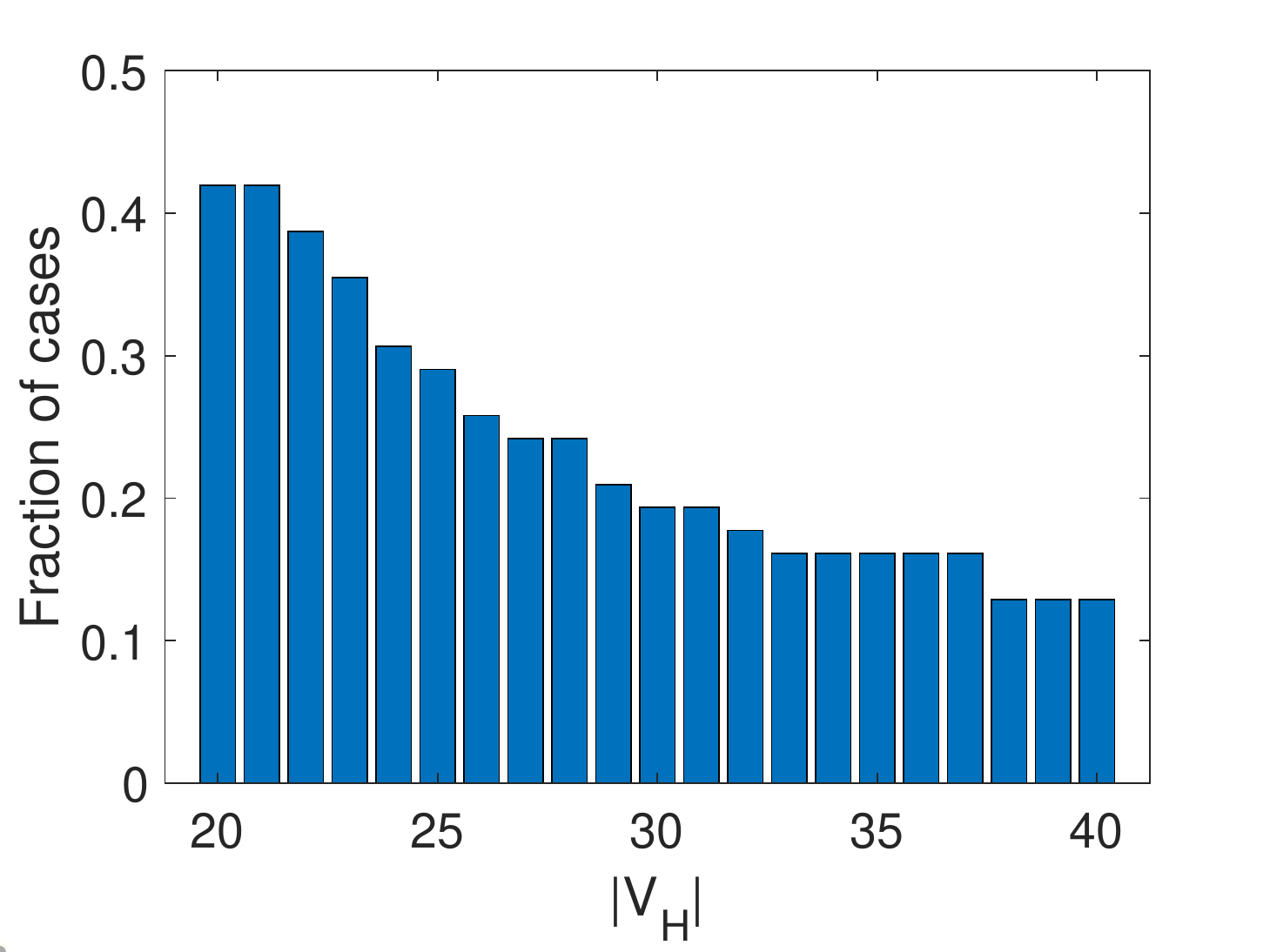}}
    \vspace{-1em}
    \caption{Prob. that condition of Theorem~\ref{thm:localize failed links}.(1) holds. }
    \label{fig:T41_acyclic}
\vspace{-1em}
\end{figure}

\subsection{Probability of Guaranteed Recovery}

First, we evaluate the fraction of randomly generated cases (combinations of $H$ and $F$) satisfying the conditions in Theorem~\ref{thm:recovery of phase angles} for recovering the phase angles, and Theorem~\ref{thm:localize failed links}.(1)\footnote{We only tested condition (1) in Theorem~\ref{thm:localize failed links}, as the other condition relies on complicated graph properties that are difficult to test. } for localizing the failed links with known phase angles and active powers. We observe that: (i) the condition in Theorem~\ref{thm:recovery of phase angles} is almost never satisfied under a nontrivial size of $H$ ($|V_H|=20,\ldots,40$), which emphasizes the importance of securing PMU measurements; (ii) the condition in Theorem~\ref{thm:localize failed links}.(1) is only satisfied with a small probability as shown Fig.~\ref{fig:T41_acyclic}, which decreases with the size of $H$ (note that Theorem~\ref{thm:localize failed links}.(1) does not depend on $F$).
In previous work~\cite{Soltan18TCNS}, this issue was addressed by actively designing ``zones'' for reporting measurements such that each zone satisfies these stringent conditions, which only works for attacks limited to single zones. 
In contrast, our recovery conditions are much more general as shown below (Fig.~\ref{fig:comp_theo_exp_h40_bar3_miss} and Fig.~\ref{fig:comp_theo_exp_h40_bar3_fa}), even though we do not assume to know the post-attack power injections in $H$. \looseness=-1

\begin{figure}
\begin{minipage}{.495\linewidth}\label{subfig:missLinks_LP_H40}
  \centerline{
  \includegraphics[width=1\columnwidth]{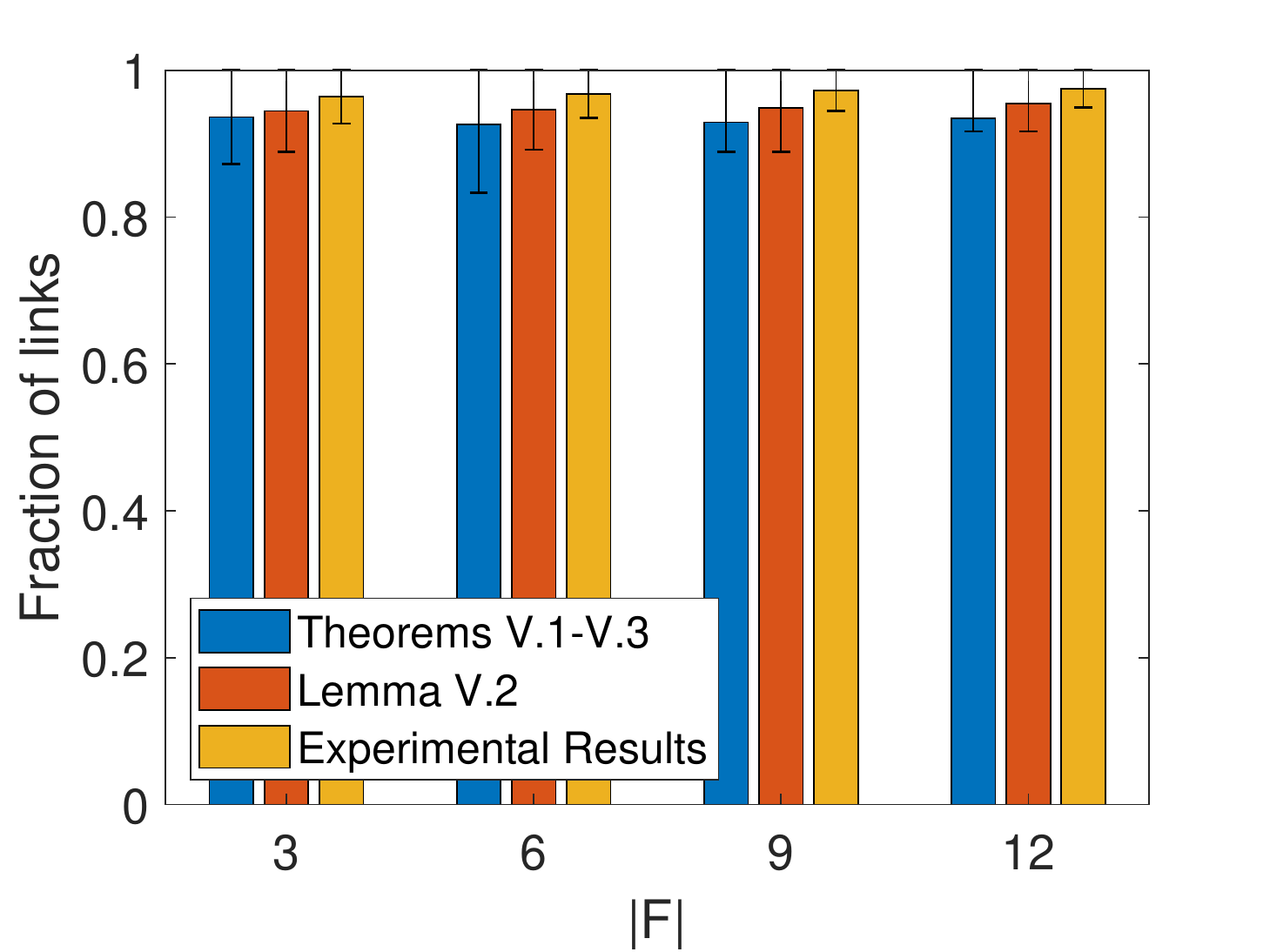}
  }
  \centerline{\small (a) Fraction of failed links.}
\end{minipage}\hfill
\begin{minipage}{.495\linewidth}\label{subfig:Journal_PerCases_miss_H40}
 \centerline{
  \includegraphics[width=1\columnwidth]{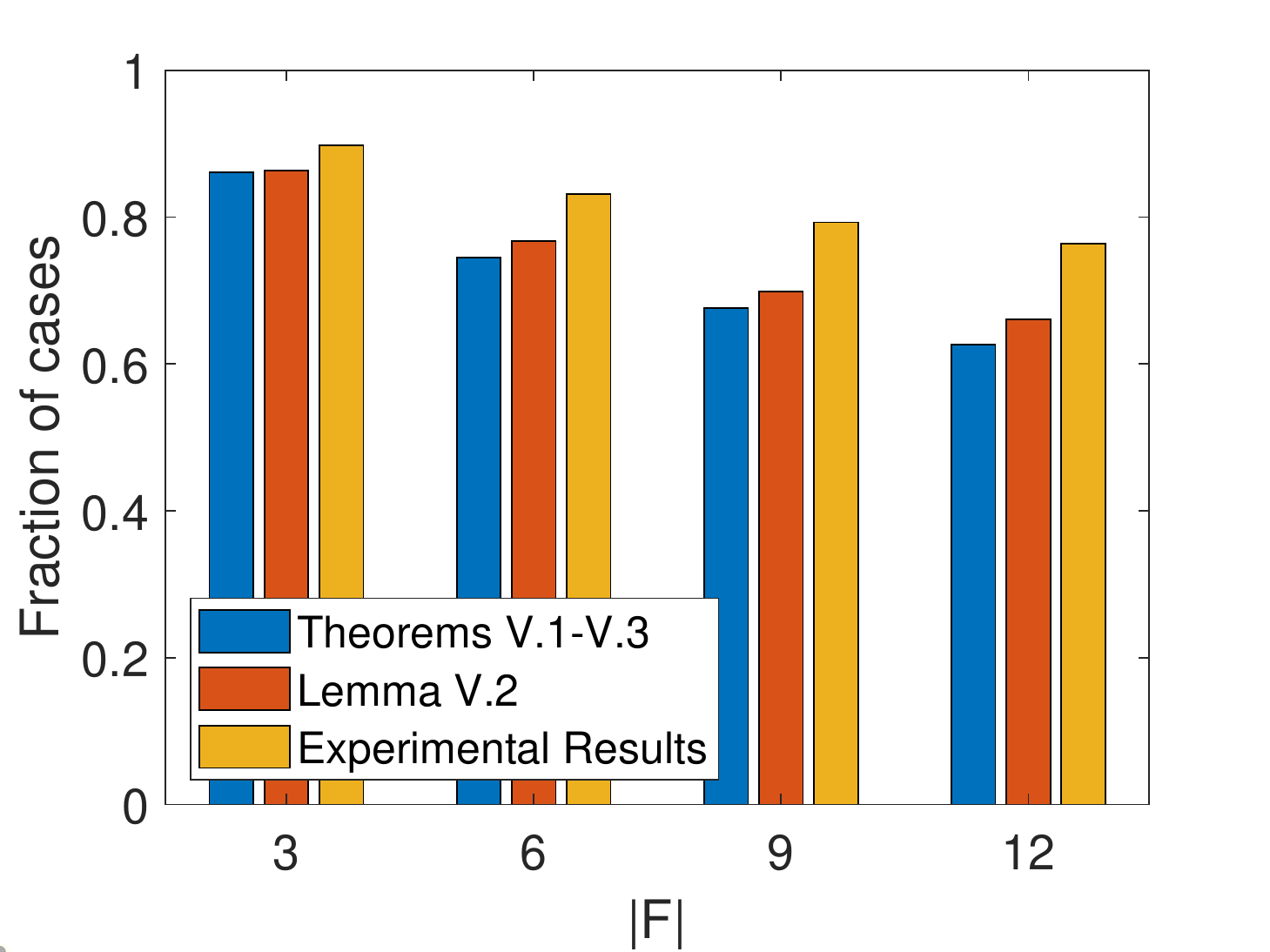}
  }
  \centerline{\small (b) Prob. of no miss}
\end{minipage}
  \caption{Prob. of correctly identifying failed link status with known phase angles but unknown active powers ($|V_H|=40$). }
  \label{fig:comp_theo_exp_h40_bar3_miss}
    \vspace{-1em}
\end{figure}

\begin{figure}
\vspace{-1.5em}
\begin{minipage}{.495\linewidth}\label{subfig:comp_miss}
  \centerline{
  \includegraphics[width=1\columnwidth]{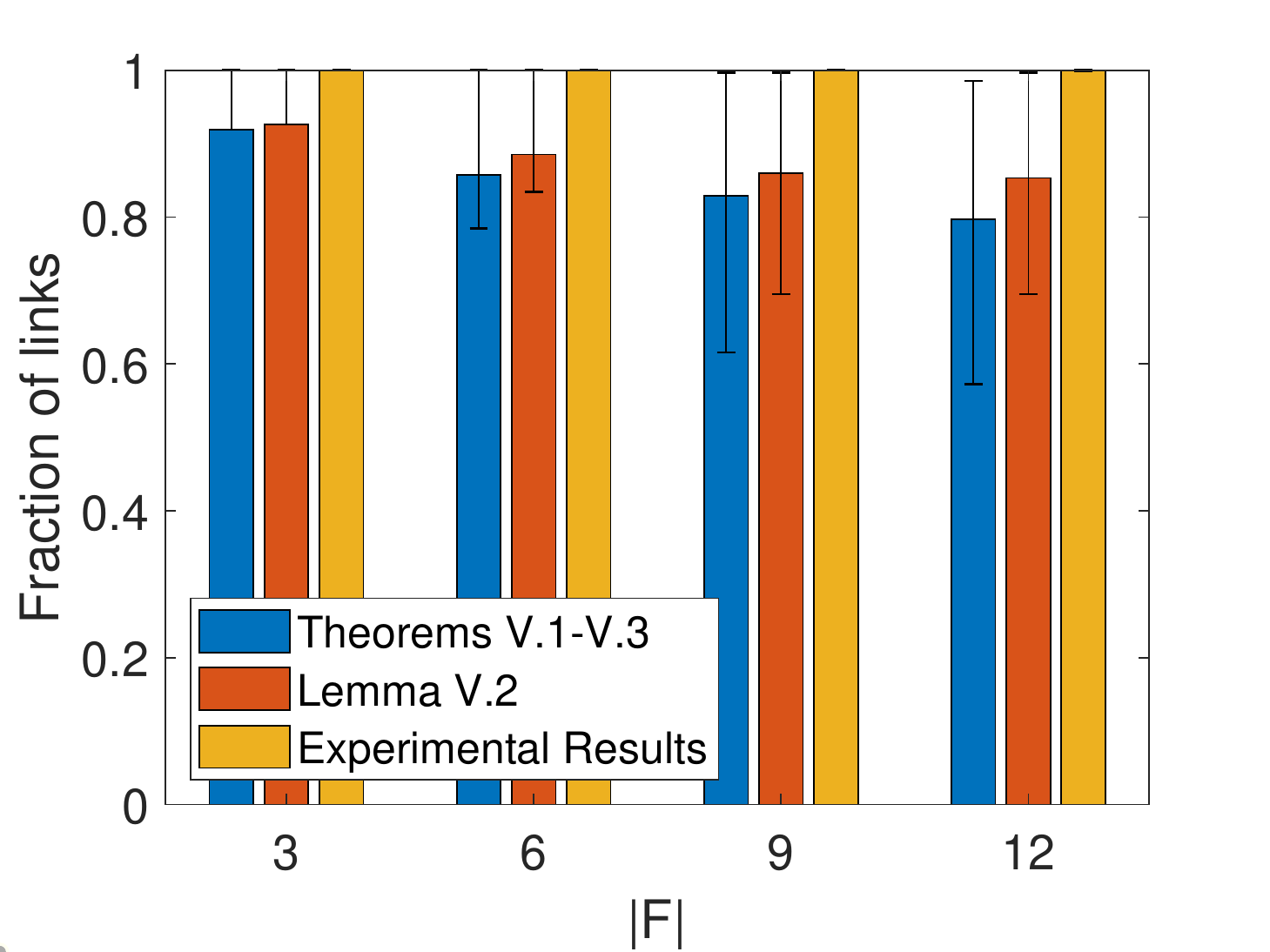}
  }
  \centerline{\small (a) Fraction of operational links.}
\end{minipage}\hfill
\begin{minipage}{.495\linewidth}\label{subfig:comp_fa}
 \centerline{
  \includegraphics[width=1\columnwidth]{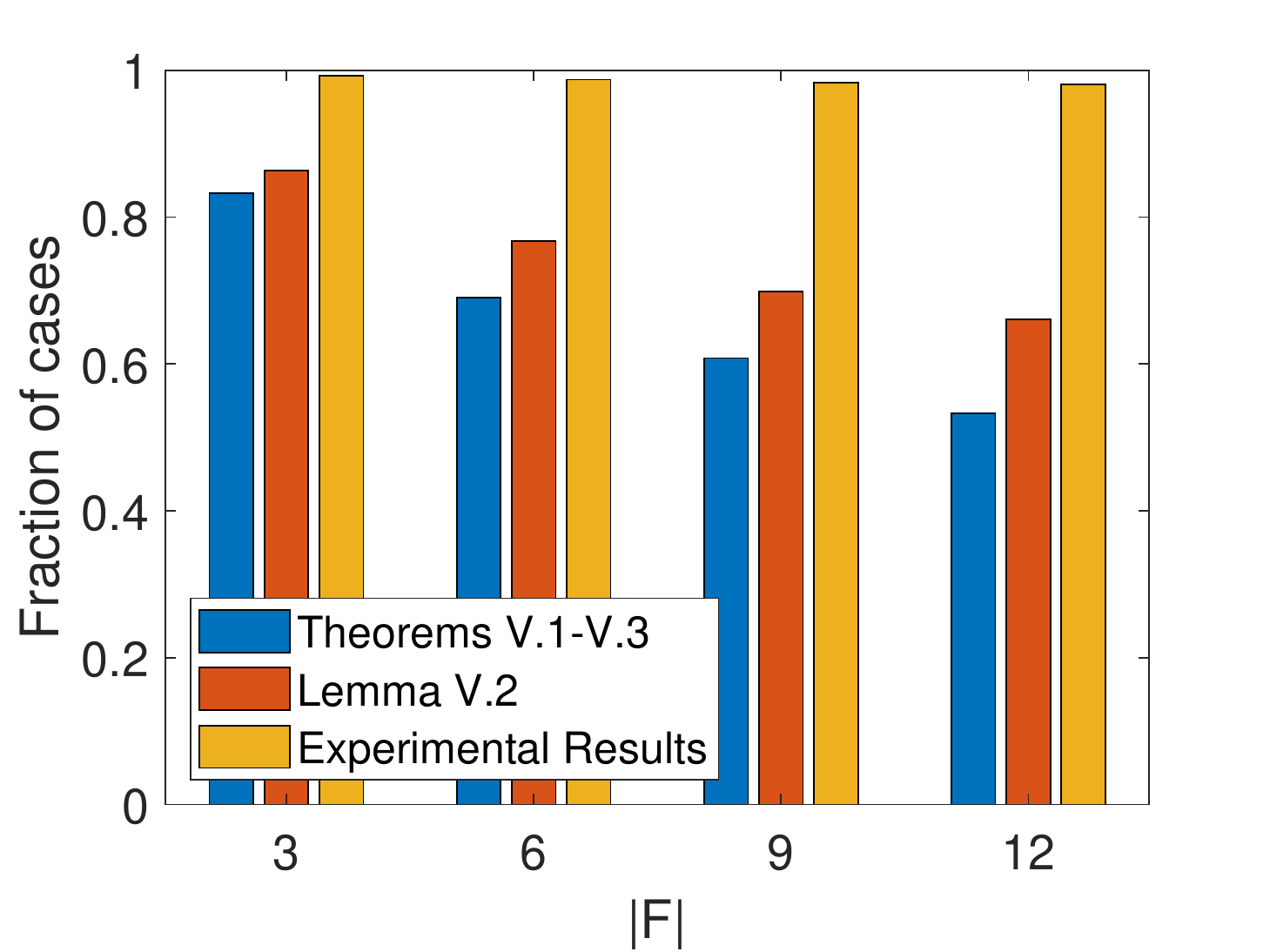}
  }
  \centerline{\small (b) Prob. of no false alarm.}
\end{minipage}
  \caption{Prob. of correctly identifying operational link status with known phase angles but unknown active powers ($|V_H|=40$). }
  \label{fig:comp_theo_exp_h40_bar3_fa}
\vspace{-1em}
\end{figure}

Then, we evaluate the fraction of links satisfying the recovery conditions in Lemma~\ref{lem:ground_alter_gale} and Theorems~\ref{lem:no_Miss_hyper_node}--\ref{lem: final_lemma}, together with the actual fraction of links whose status are correctly identified by Algorithm~\ref{alg: fedgeDet}, under known phase angles and unknown active powers. The results are shown in Fig.~\ref{fig:comp_theo_exp_h40_bar3_miss} and Fig.~\ref{fig:comp_theo_exp_h40_bar3_fa}, where ``Experimental Results'' is the actual performance of Algorithm~\ref{alg: fedgeDet}, “Lemma V.2” indicates the failed/operational links satisfying Lemma~\ref{lem:ground_alter_gale}, and ``Theorems~V.1--V.3'' indicates the failed links that satisfy either Theorem~\ref{lem:no_Miss_hyper_node} or \eqref{eq: no_miss_hyper_f} in Theorem~\ref{lem: final_lemma}, or the operational links that satisfy either Theorem~\ref{lem:no_fa_hyper_direc} or \eqref{eq: no_fa_hyper_f} in Theorem~\ref{lem: final_lemma}. More specifically, Figs.~\ref{fig:comp_theo_exp_h40_bar3_miss}--\ref{fig:comp_theo_exp_h40_bar3_fa}~(a) {show} the fraction of correctly identified failed/operational links, averaged over all the randomly generated cases, where the bottom and the top edges of the error bar indicate the $25^{\mbox{th}}$ and $75^{\mbox{th}}$ percentiles. In addition, Figs.~\ref{fig:comp_theo_exp_h40_bar3_miss}--\ref{fig:comp_theo_exp_h40_bar3_fa}~(b) {demonstrate} the fraction of cases with no miss/false alarm.

It is worth noting that checking whether Theorems~\ref{lem:no_Miss_hyper_node}--\ref{lem: final_lemma} hold for each link is hard since {they require} testing all possible hyper-nodes, whose number is exponential in $|V_H|$. To test whether Theorem~\ref{lem:no_Miss_hyper_node} holds for a given link, we heuristically construct a hyper-node through BFS starting from each of its endpoints. In each iteration of BFS, we add all the nodes that cause violation of condition~1 or condition~2 in Theorem~\ref{lem:no_Miss_hyper_node} into the hyper-node and test whether condition~3 holds. This procedure applies similarly to the testing of Theorem~\ref{lem:no_fa_hyper_direc} and the construction of $T$ in Theorem~\ref{lem: final_lemma}.
Therefore, the fractions of links/cases covered by {Theorems~\ref{lem:no_Miss_hyper_node}--\ref{lem: final_lemma}} in Figs.~\ref{fig:comp_theo_exp_h40_bar3_miss}--\ref{fig:comp_theo_exp_h40_bar3_fa} are {lower bounds}.
{Meanwhile, checking whether Lemma~\ref{lem:ground_alter_gale} holds for each link is easy. }
Nevertheless, we see that (\romannumeral1) Theorems~\ref{lem:no_Miss_hyper_node}-\ref{lem: final_lemma} explain the success of Algorithm~\ref{alg: fedgeDet} quite well, especially when $|F|$ is small, (\romannumeral2) {the theorems can explain most cases covered by Lemma~\ref{lem:ground_alter_gale}}, and (\romannumeral3) Algorithm~\ref{alg: fedgeDet} is actually highly accurate in identifying the operational links even though the theoretical conditions for doing so appear stringent.
{To better understand the last phenomenon, we observe in experiments that many operational links carry small post-attack power flows, which makes the conditions in Theorem~\ref{lem:no_fa_hyper_direc} and \eqref{eq: no_fa_hyper_f} in Theorem~\ref{lem: final_lemma} hard to satisfy. On the contrary, the values of hypothetical power flows on failed links are usually large, making the conditions in  Theorem~\ref{lem:no_Miss_hyper_node} and \eqref{eq: no_miss_hyper_f} in Theorem~\ref{lem: final_lemma} easier to satisfy. }
\looseness=-1 

\begin{figure}
\begin{minipage}{.48\linewidth}
    \centerline{
    \includegraphics[width=1\columnwidth]{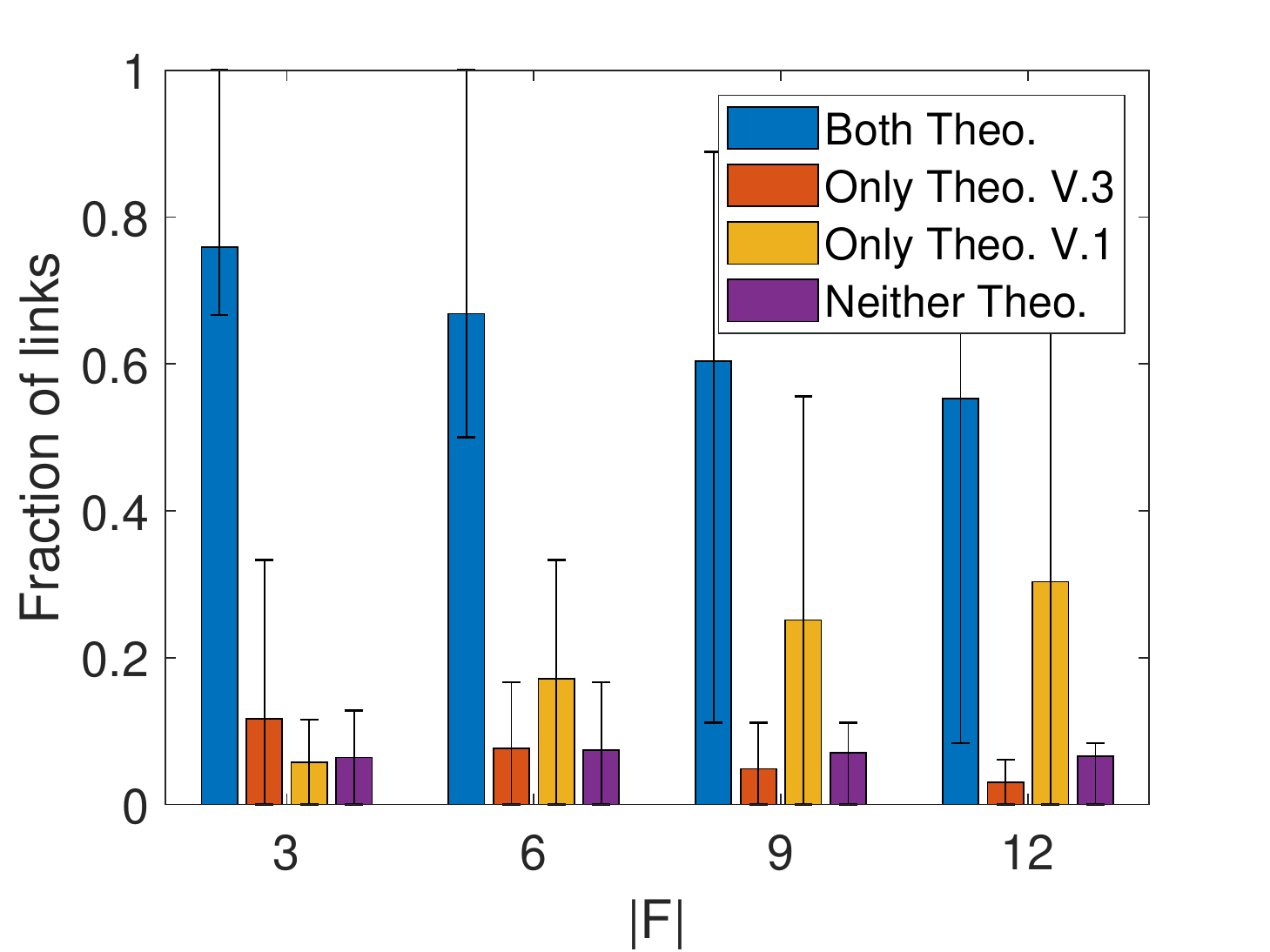}}
    \vspace{-.5em}
    \centerline{\small (a) failed links }
\end{minipage}\hfill
\begin{minipage}{.48\linewidth}
    \centerline{
    \includegraphics[width=1\columnwidth]{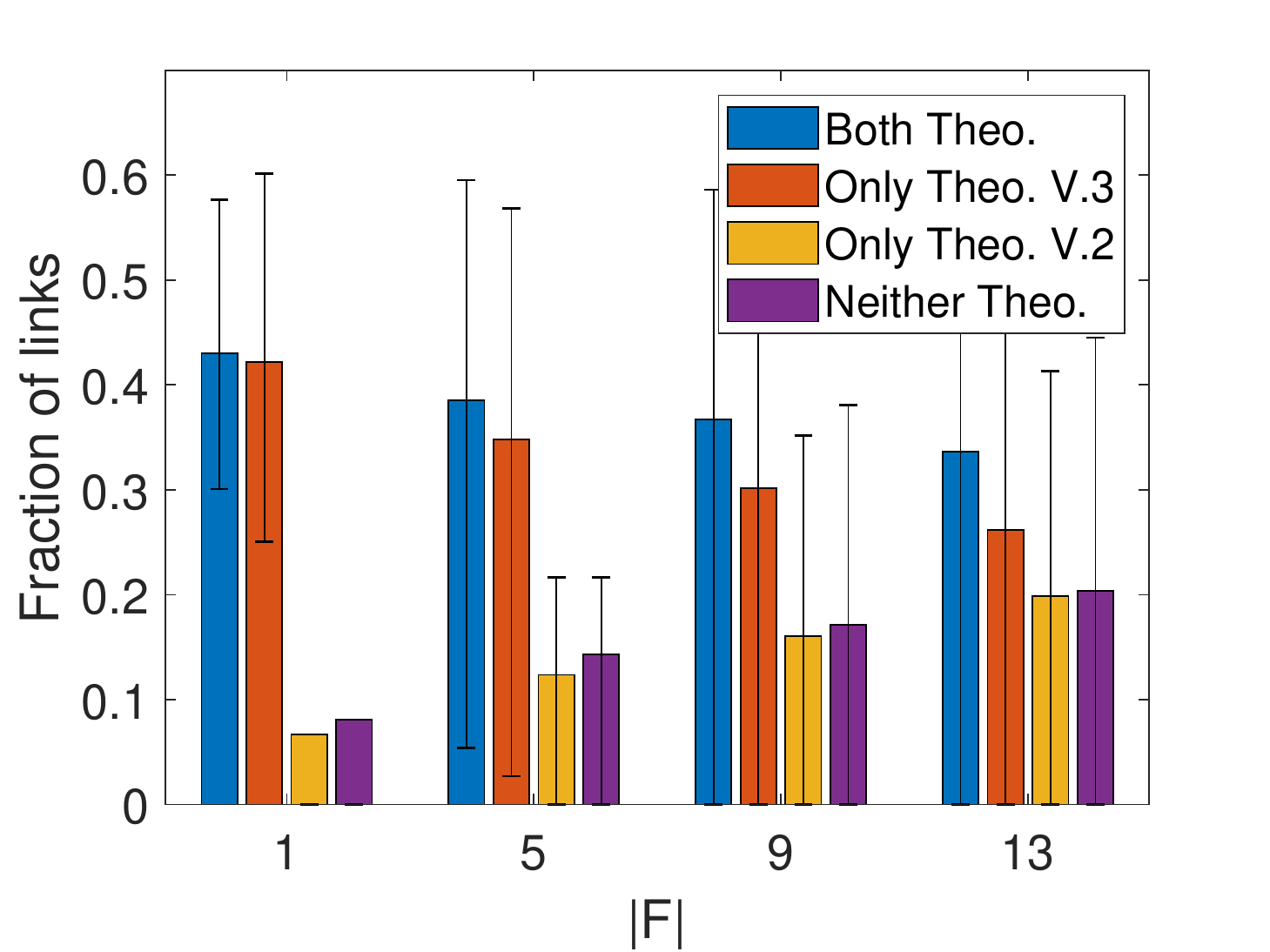}}
        \vspace{-.5em}
    \centerline{\small (b) operational links}
\end{minipage}
\vspace{-.05em}
\caption{Prob. that conditions of Theorems~\ref{lem:no_Miss_hyper_node}--\ref{lem: final_lemma} hold.  }\label{fig:relationship}
\vspace{-1em}
\end{figure}

To better understand the relationship between Theorems~\ref{lem:no_Miss_hyper_node}--\ref{lem: final_lemma}, 
we decompose all the failed links into 4 categories: (1) links satisfying both Theorem~\ref{lem:no_Miss_hyper_node} and \eqref{eq: no_miss_hyper_f} in Theorem~\ref{lem: final_lemma}, (2) links satisfying only \eqref{eq: no_miss_hyper_f} in Theorem~\ref{lem: final_lemma}, (3) links satisfying only Theorem~\ref{lem:no_Miss_hyper_node}, and (4) links satisfying neither. Fig.~\ref{fig:relationship}~(a) shows the fraction of links in each category, averaged over all the simulated cases. We observe that (\romannumeral1) many failed links satisfy both conditions; (\romannumeral2) as $|F|$ grows, the fraction of failed links satisfying only \eqref{eq: no_miss_hyper_f} in Theorem~\ref{lem: final_lemma} decreases, while the fraction of links satisfying only Theorem~\ref{lem:no_Miss_hyper_node} increases; (\romannumeral3) the fraction of links covered by neither of the conditions is small. Similarly, we look into the coverage of Theorem~\ref{lem:no_fa_hyper_direc} and \eqref{eq: no_fa_hyper_f} in Theorem~\ref{lem: final_lemma} for operational links, as shown in Fig.~\ref{fig:relationship}~(b), with similar observations 
except that the fraction of links covered by neither of these conditions increases with $|F|$. \looseness=-1 

\subsection{Accuracy of Failure Localization}

\begin{figure}
    \centerline{
    \includegraphics[width=.48\linewidth]{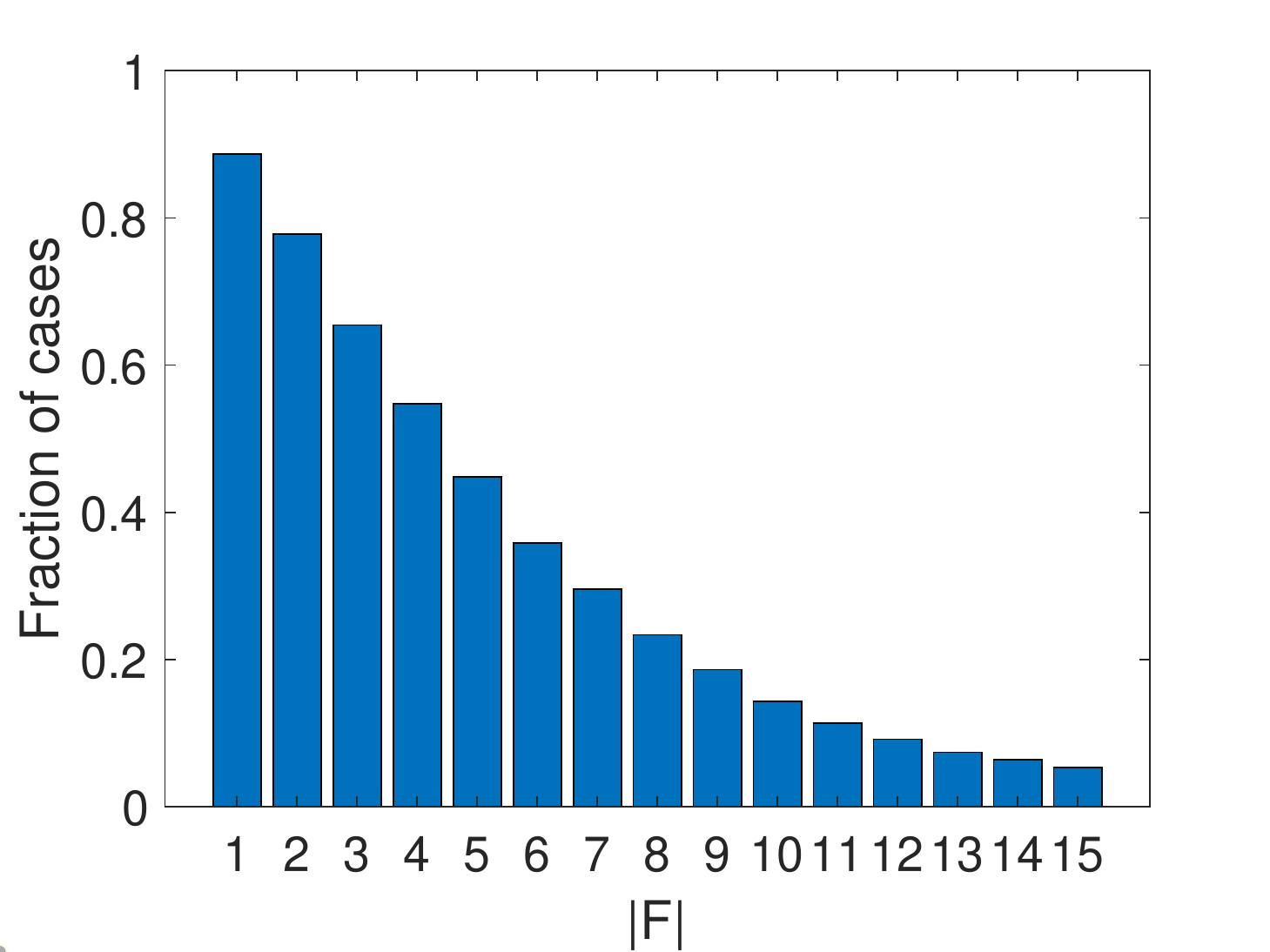}}
        \vspace{-1em}
    \caption{Prob. that setting $\bfDelta = \bm{0}$ is feasible ($|V_H|=40$).}
    \label{fig:T41}
\vspace{-1em}
\end{figure}

Next, we compare Algorithm~\ref{alg: fedgeDet} with benchmarks in localizing the failed links, assuming that post-attack phase angles are known, due to the better protection of PMU measurements~\cite{WAMPACsecurity}.
We consider two benchmarks: (\romannumeral1) the solution given in Theorem~\ref{thm:localize failed links} (extended from \cite{Soltan18TCNS}), i.e., estimating $F$ by $\supp(\bfx)$ for the solution to $\min \|\bfx\|_1$ s.t. (\ref{eq:D_H x equation}), assuming the true $\bfDelta_H$ to be known, and (\romannumeral2) $\min \|\bfx\|_1$ s.t. $\Arrowvert \bfB_{H|G}(\bftheta - \bftheta') - \bfD_H \bfx \Arrowvert_2  \le \Arrowvert \bfP_H \Arrowvert_2$, which is extended from the solutions in \cite{Zhu12TPS, chen2014efficient}.
We note that the original solution in \cite{Soltan18TCNS} (which assumes $\bfDelta=\bm{0}$) is often infeasible for our problem, as shown in Fig.~\ref{fig:T41}, thus not used as a benchmark.
Note that benchmark~(\romannumeral1) should be treated as a \emph{``performance upper bound''}, as it assumes more knowledge (i.e., ground-truth $\bfDelta_H$)
than our proposed algorithm. \looseness=-1 

As shown in Fig.~\ref{fig:comp_miss_h40_bar3}, benchmark~(\romannumeral1) demonstrates the best performance with regard to both miss-detection rate and the probability of having no miss-detection, while Algorithm~\ref{alg: fedgeDet} performs much better than benchmark~(\romannumeral2). This confirms the importance of knowing or estimating load shedding values in failure localization. Regarding the false alarm as shown in Fig.~\ref{fig:comp_fa_h40_bar3}, Algorithm~\ref{alg: fedgeDet} performs even better than benchmark~(\romannumeral1). This is because the decision variable $\bfx$ in benchmark~(\romannumeral1) combines the information of both the failed links and the phase angles $\bftheta_H'$, and thus does not fully exploit the knowledge of $\bftheta_H'$.
Furthermore, from the specific number of false alarms/misses in Fig.~\ref{fig:fa_miss_h40_bar3}, we see that Algorithm~\ref{alg: fedgeDet} correctly detects all the failed links with almost no false alarm for the majority of the time, while only missing a couple of failed links for the rest.  \looseness=-1



\begin{figure}
\begin{minipage}{.495\linewidth}
  \centerline{
  \includegraphics[width=1\columnwidth]{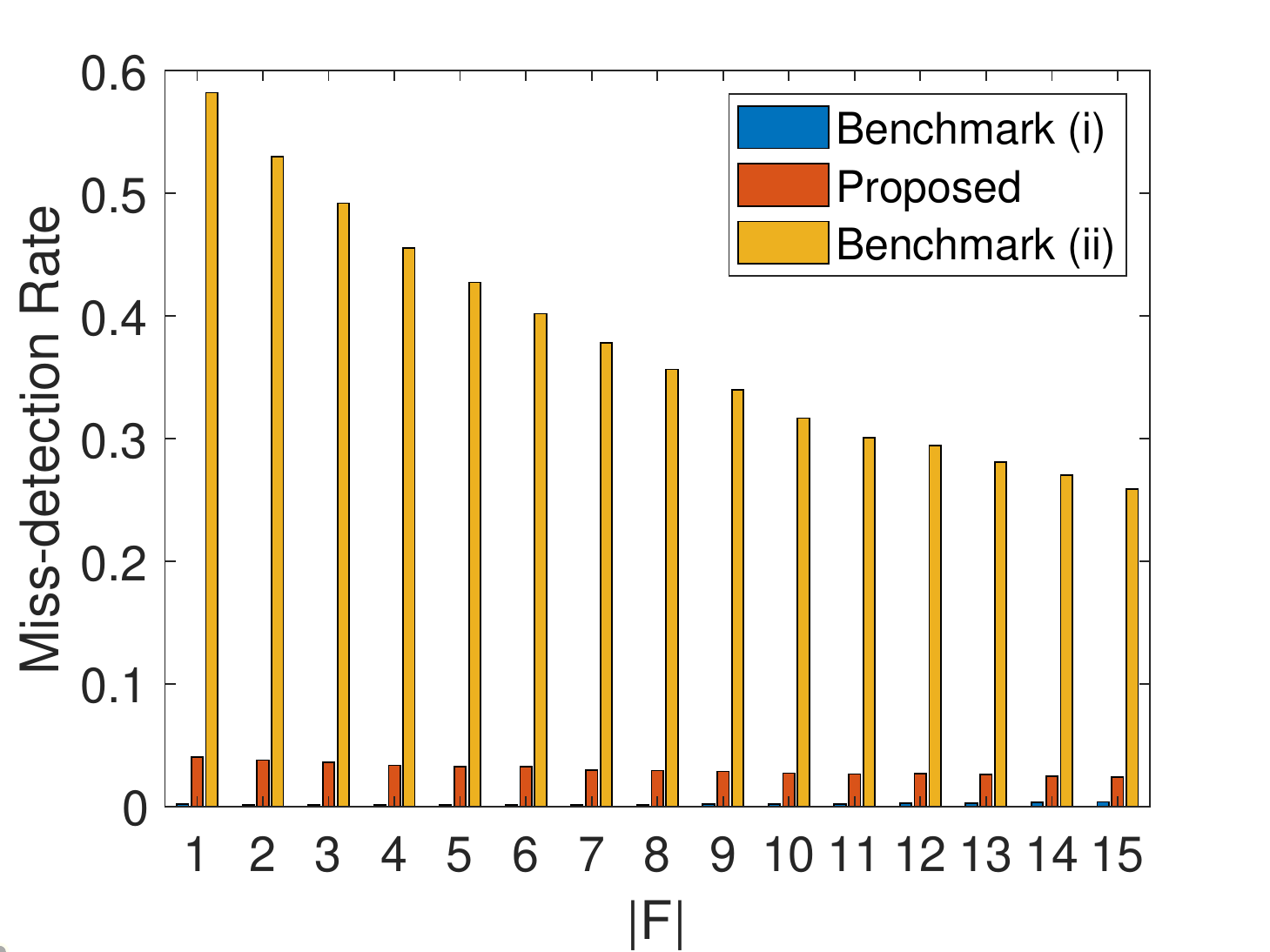}}
  \centerline{\small (a) miss rate}
\end{minipage}\hfill
\begin{minipage}{.495\linewidth}
 \centerline{
  \includegraphics[width=1\columnwidth]{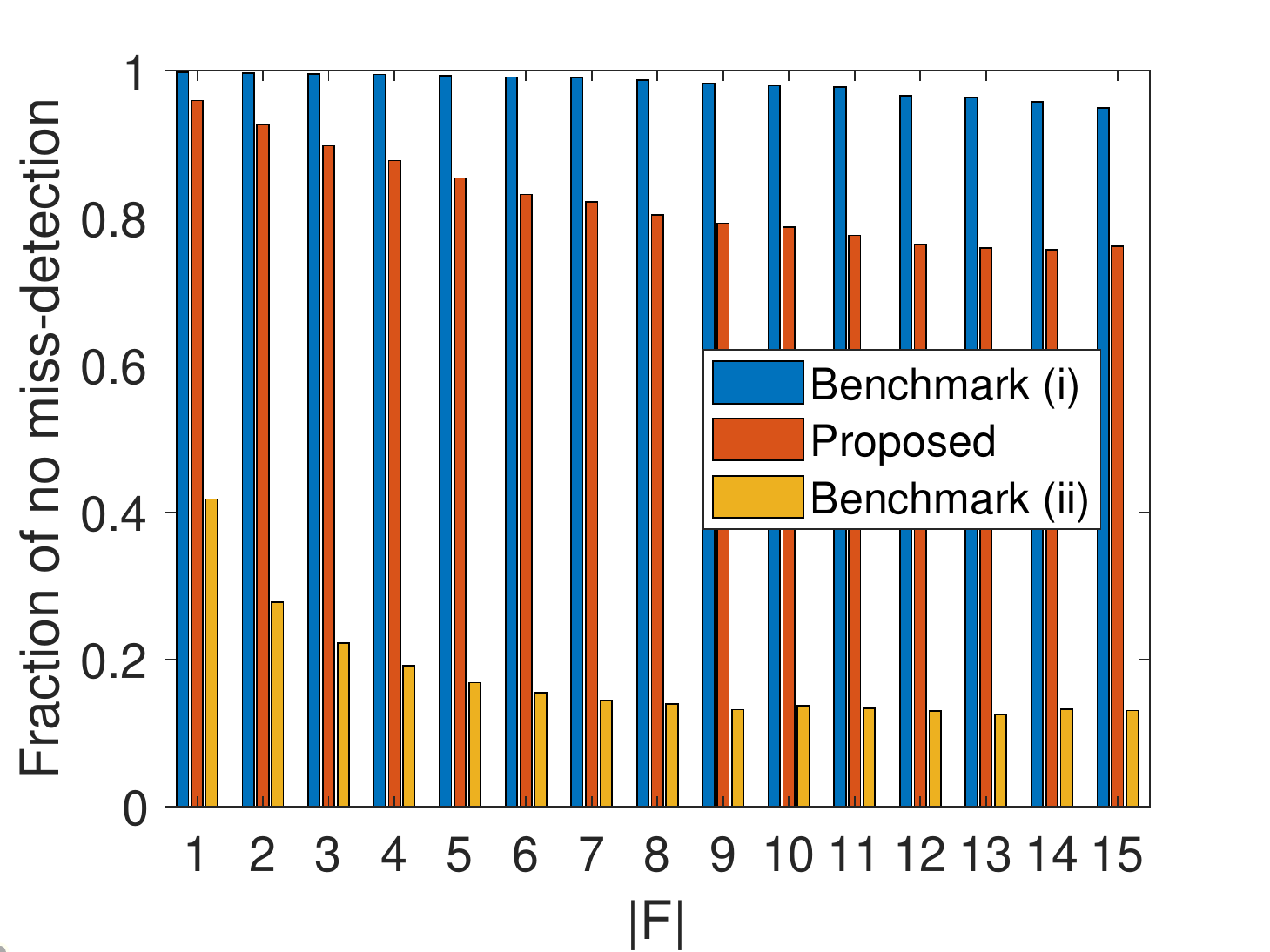}}
  \centerline{\small (b) Probability of no miss}
\end{minipage}
  \caption{Performance comparison on miss rate ($|V_H|=40$).}
  \label{fig:comp_miss_h40_bar3}
  \vspace{-1em}
\end{figure}

\begin{figure}
\begin{minipage}{.495\linewidth}
  \centerline{
  \includegraphics[width=1\columnwidth]{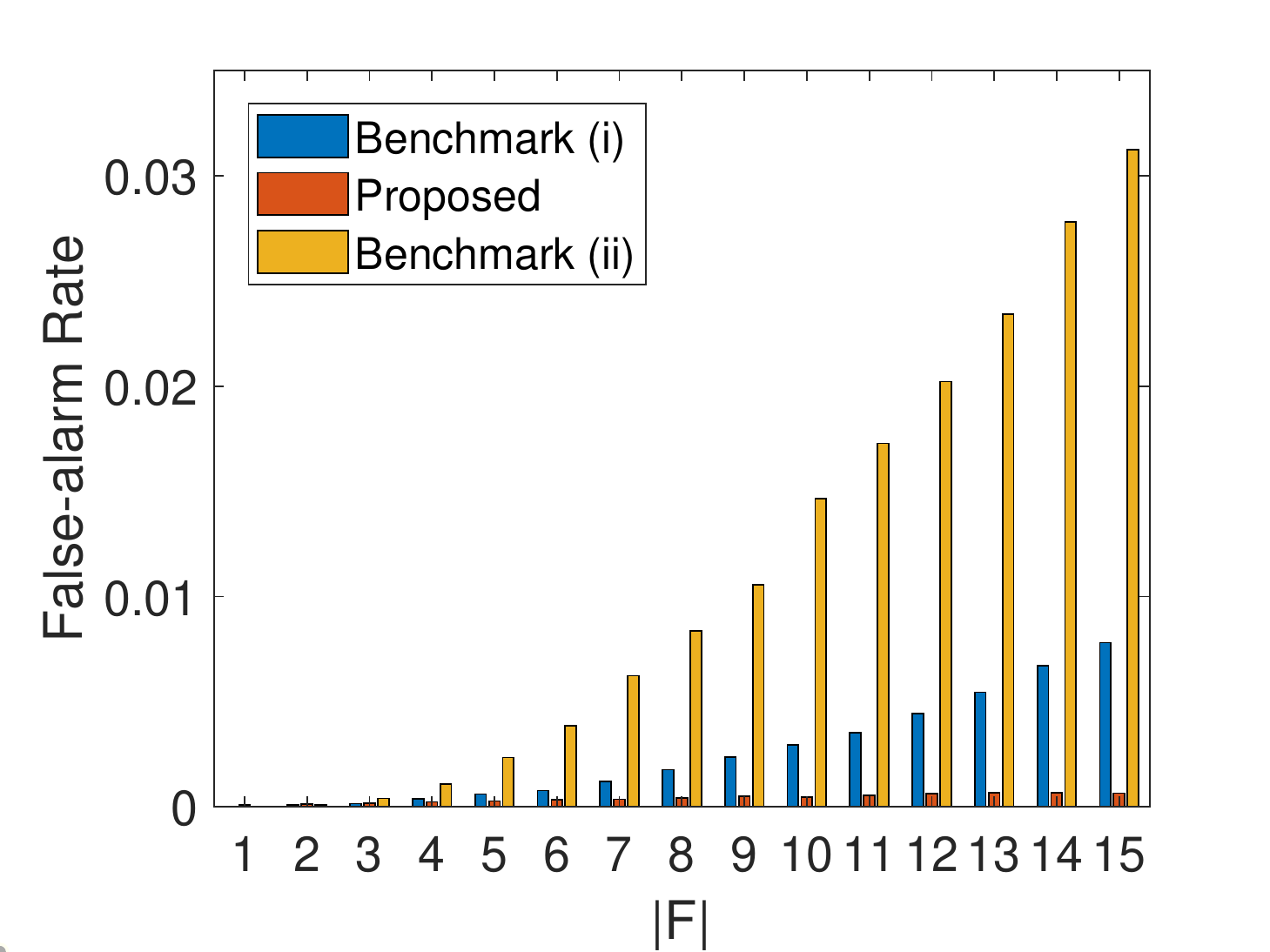}}
  \centerline{\small (a) false alarm rate}
\end{minipage}\hfill
\begin{minipage}{.495\linewidth}
 \centerline{
  \includegraphics[width=1\columnwidth]{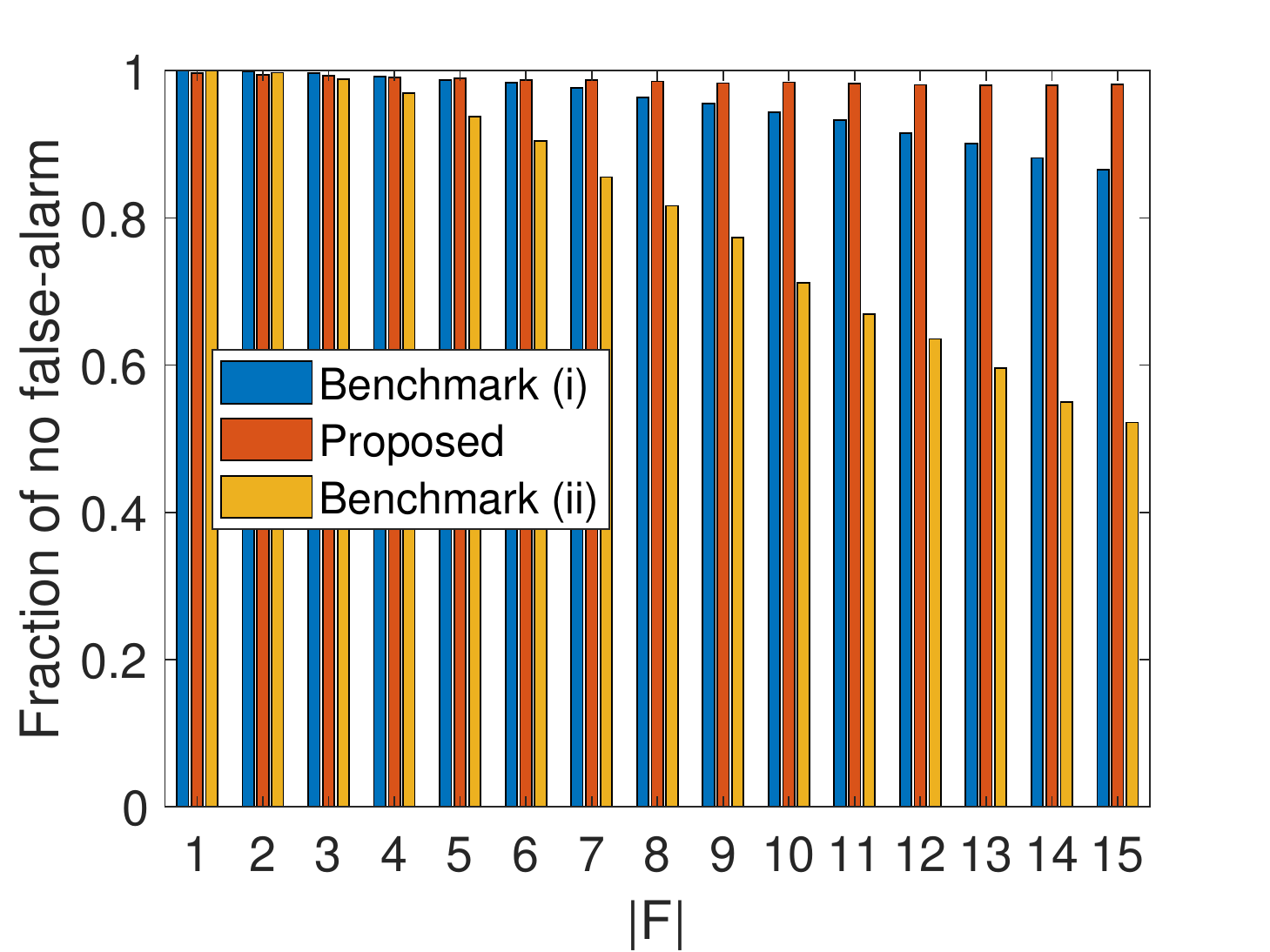}}
  \centerline{\small (b) Probability of no false alarm}
\end{minipage}
  \caption{Performance comparison on false alarm rate ($|V_H|=40$).}
  \label{fig:comp_fa_h40_bar3}
  \vspace{-1em}
\end{figure}

\begin{figure}
\begin{minipage}{.495\linewidth}
  \centerline{
  \includegraphics[width=1\columnwidth]{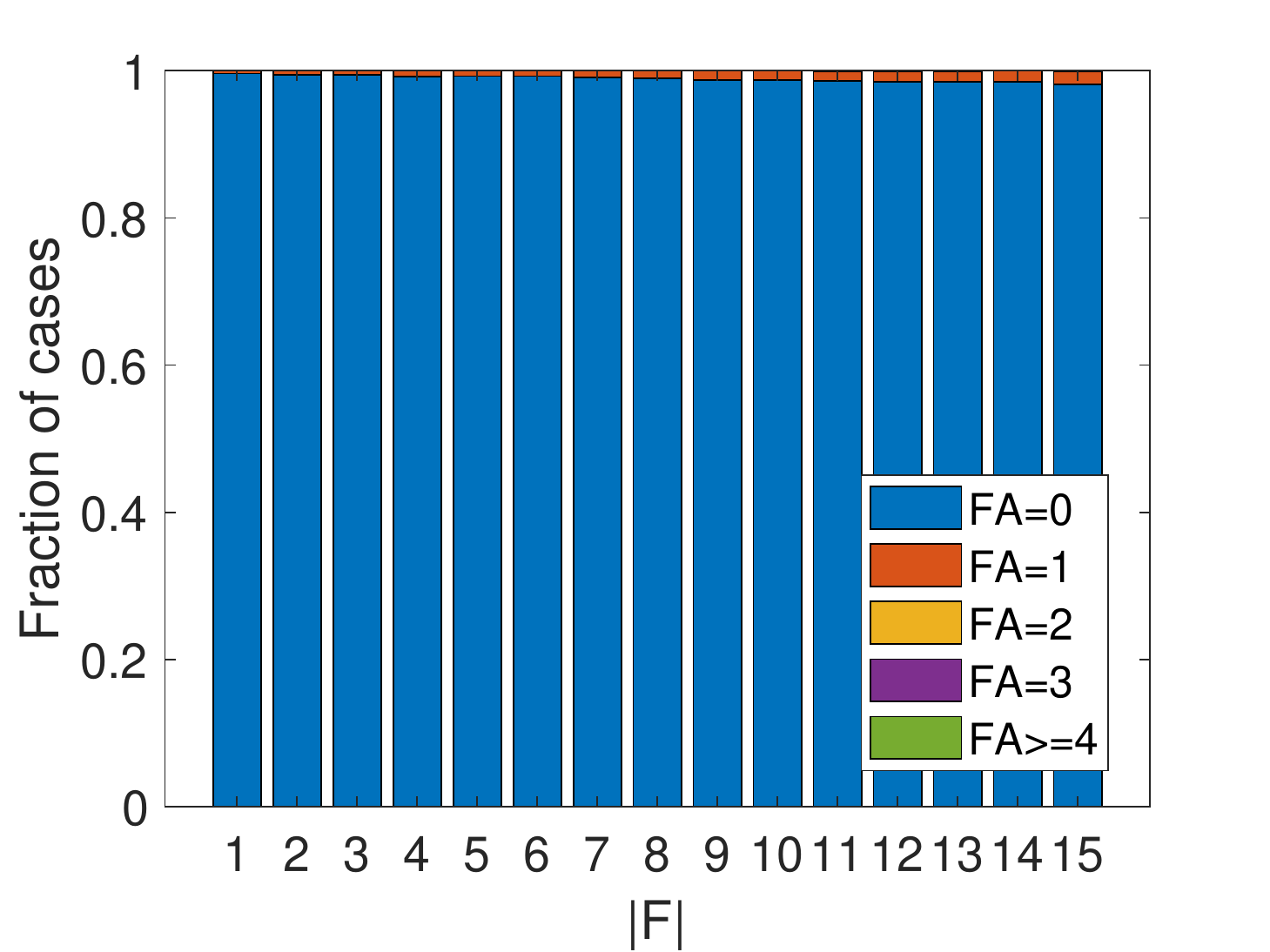}}
  \centerline{\small (a) false alarm }
\end{minipage}\hfill
\begin{minipage}{.495\linewidth}
 \centerline{
  \includegraphics[width=1\columnwidth]{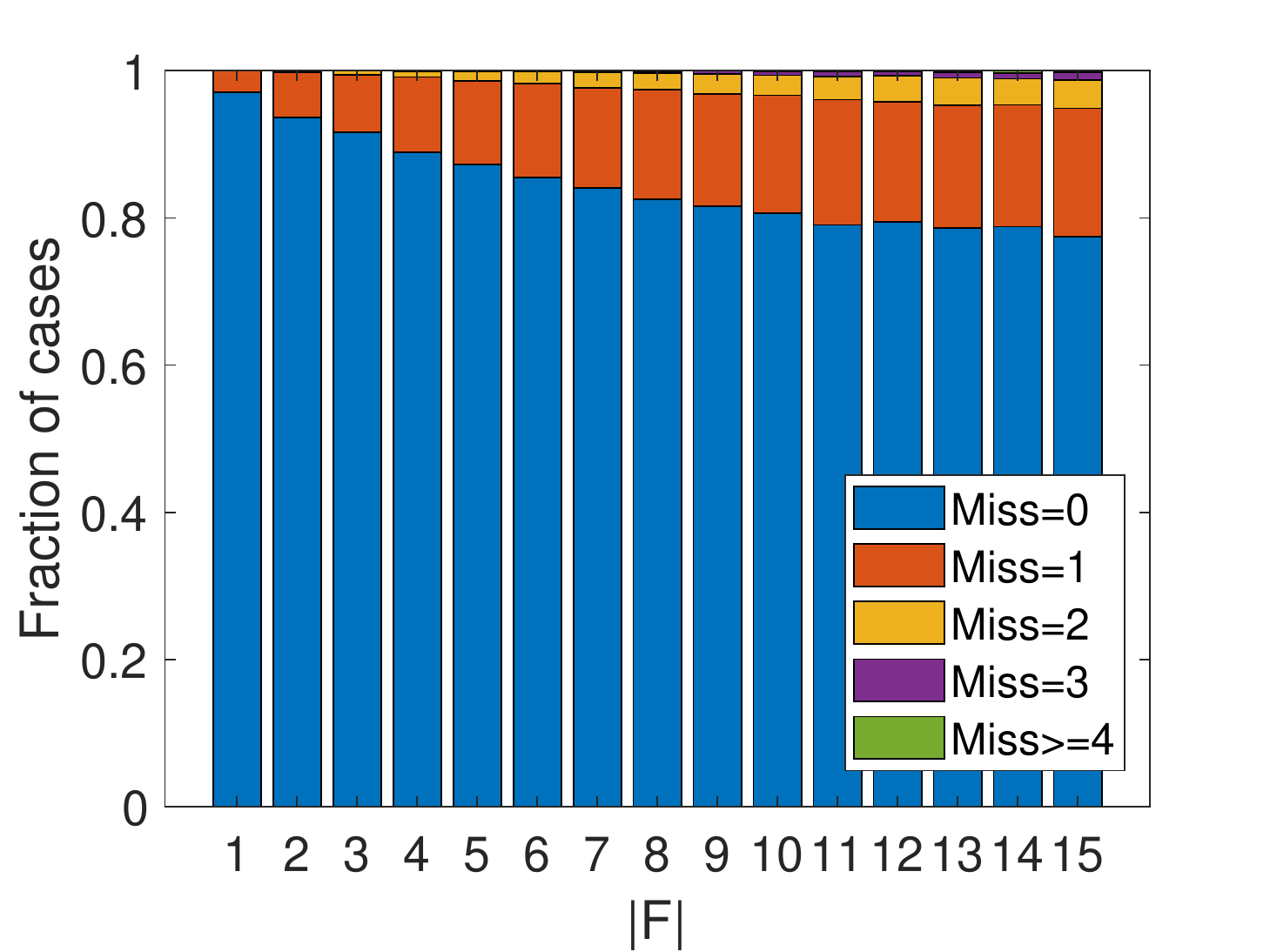}}
  \centerline{\small (b) miss }
\end{minipage}
  \caption{Number of false alarms/misses of Algorithm~\ref{alg: fedgeDet} ($|V_H|=40$).}
  \label{fig:fa_miss_h40_bar3}
  \vspace{-1em}
\end{figure}



\section{Conclusion}\label{sec:Conclusion}

We investigated the problem of power grid state estimation under general cyber-physical attack that may disconnect the grid. First, we demonstrated that existing solutions and the corresponding recovery conditions for recovering phase angles and link status are still applicable with the knowledge of post-attack power injections. Second, for unknown post-attack power injections, we proposed an LP-based algorithm to identify link status within the attacked area with the knowledge of recovered phase angles. We established sufficient conditions for the proposed algorithm to identify the link status correctly. Finally, our evaluations based on the Polish power grid demonstrated that the proposed algorithm is highly accurate in localizing the failed links, and the majority of cases are theoretically guaranteed by the presented conditions. Meanwhile, the existing condition for perfectly recovering phase angles can be hard to satisfy in practice, indicating the importance of safeguarding PMU measurements from cyber attacks.  \looseness=-1


\bibliographystyle{IEEEtran}
\bibliography{myBib}

\begin{thebibliography}{10}
\providecommand{\url}[1]{#1}
\csname url@samestyle\endcsname
\providecommand{\newblock}{\relax}
\providecommand{\bibinfo}[2]{#2}
\providecommand{\BIBentrySTDinterwordspacing}{\spaceskip=0pt\relax}
\providecommand{\BIBentryALTinterwordstretchfactor}{4}
\providecommand{\BIBentryALTinterwordspacing}{\spaceskip=\fontdimen2\font plus
\BIBentryALTinterwordstretchfactor\fontdimen3\font minus
  \fontdimen4\font\relax}
\providecommand{\BIBforeignlanguage}[2]{{%
\expandafter\ifx\csname l@#1\endcsname\relax
\typeout{** WARNING: IEEEtran.bst: No hyphenation pattern has been}%
\typeout{** loaded for the language `#1'. Using the pattern for}%
\typeout{** the default language instead.}%
\else
\language=\csname l@#1\endcsname
\fi
#2}}
\providecommand{\BIBdecl}{\relax}
\BIBdecl

\bibitem{yudi20SmartGridComm}
Y.~Huang, T.~He, N.~R. Chaudhuri, and T.~L. Porta, ``Power grid state
  estimation under general cyber-physical attacks,'' in \emph{IEEE
  SmartGridComm}.\hskip 1em plus 0.5em minus 0.4em\relax IEEE, 2020.

\bibitem{Fairley16Spectrum}
P.~Fairley, ``Cybersecurity at {U.S.} utilities due for an upgrade: Tech to
  detect intrusions into industrial control systems will be mandatory,''
  \emph{IEEE Spectrum}, vol.~53, no.~5, pp. 11--13, May 2016.

\bibitem{Soltan18TCNS}
S.~Soltan, M.~Yannakakis, and G.~Zussman, ``Power grid state estimation
  following a joint cyber and physical attack,'' \emph{IEEE Transactions on
  Control of Network Systems}, vol.~5, no.~1, pp. 499--512, 2018.

\bibitem{Soltan17PES}
S.~Soltan and G.~Zussman, ``Power grid state estimation after a cyber-physical
  attack under the {AC} power flow model,'' in \emph{IEEE PES-GM}, 2017.

\bibitem{huang2012state}
Y.-F. Huang, S.~Werner, J.~Huang, N.~Kashyap, and V.~Gupta, ``State estimation
  in electric power grids: Meeting new challenges presented by the requirements
  of the future grid,'' \emph{IEEE Signal Processing Magazine}, vol.~29, no.~5,
  pp. 33--43, 2012.

\bibitem{liu2011false}
Y.~Liu, P.~Ning, and M.~K. Reiter, ``False data injection attacks against state
  estimation in electric power grids,'' \emph{ACM Transactions on Information
  and System Security (TISSEC)}, vol.~14, no.~1, pp. 1--33, 2011.

\bibitem{shoukry2017secure}
Y.~Shoukry, P.~Nuzzo, A.~Puggelli, A.~L. Sangiovanni-Vincentelli, S.~A. Seshia,
  and P.~Tabuada, ``Secure state estimation for cyber-physical systems under
  sensor attacks: A satisfiability modulo theory approach,'' \emph{IEEE
  Transactions on Automatic Control}, vol.~62, no.~10, pp. 4917--4932, 2017.

\bibitem{dan2010stealth}
G.~Dan and H.~Sandberg, ``Stealth attacks and protection schemes for state
  estimators in power systems,'' in \emph{2010 first IEEE international
  conference on smart grid communications}.\hskip 1em plus 0.5em minus
  0.4em\relax IEEE, 2010, pp. 214--219.

\bibitem{vukovic2011network}
O.~Vukovi{\'c}, K.~C. Sou, G.~D{\'a}n, and H.~Sandberg, ``Network-layer
  protection schemes against stealth attacks on state estimators in power
  systems,'' in \emph{2011 IEEE International Conference on Smart Grid
  Communications (SmartGridComm)}.\hskip 1em plus 0.5em minus 0.4em\relax IEEE,
  2011, pp. 184--189.

\bibitem{kim2013topology}
J.~Kim and L.~Tong, ``On topology attack of a smart grid: Undetectable attacks
  and countermeasures,'' \emph{IEEE Journal on Selected Areas in
  Communications}, vol.~31, no.~7, pp. 1294--1305, 2013.

\bibitem{deng2017ccpa}
R.~Deng, P.~Zhuang, and H.~Liang, ``Ccpa: Coordinated cyber-physical attacks
  and countermeasures in smart grid,'' \emph{IEEE Transactions on Smart Grid},
  vol.~8, no.~5, pp. 2420--2430, 2017.

\bibitem{soltan2018react}
S.~Soltan, M.~Yannakakis, and G.~Zussman, ``React to cyber attacks on power
  grids,'' \emph{IEEE Transactions on Network Science and Engineering}, vol.~6,
  no.~3, pp. 459--473, 2018.

\bibitem{tate2008line}
J.~E. Tate and T.~J. Overbye, ``Line outage detection using phasor angle
  measurements,'' \emph{IEEE Transactions on Power Systems}, vol.~23, no.~4,
  pp. 1644--1652, 2008.

\bibitem{tate2009double}
------, ``Double line outage detection using phasor angle measurements,'' in
  \emph{2009 IEEE Power \& Energy Society General Meeting}.\hskip 1em plus
  0.5em minus 0.4em\relax IEEE, 2009, pp. 1--5.

\bibitem{Zhu12TPS}
H.~Zhu and G.~B. Giannakis, ``Sparse overcomplete representations for efficient
  identification of power line outages,'' \emph{IEEE Transactions on Power
  Systems}, vol.~27, no.~4, pp. 2215--2224, November 2012.

\bibitem{chen2014efficient}
J.-C. Chen, W.-T. Li, C.-K. Wen, J.-H. Teng, and P.~Ting, ``Efficient
  identification method for power line outages in the smart power grid,''
  \emph{IEEE Transactions on Power Systems}, vol.~29, no.~4, pp. 1788--1800,
  2014.

\bibitem{stevenson1994power}
\BIBentryALTinterwordspacing
W.~Stevenson and J.~Grainger, \emph{Power System Analysis}.\hskip 1em plus
  0.5em minus 0.4em\relax McGraw-Hill Education, 1994. [Online]. Available:
  \url{https://books.google.com/books?id=NBIoAQAAMAAJ}
\BIBentrySTDinterwordspacing

\bibitem{Stott09TPS}
B.~Stott, J.~Jardim, and O.~Alsac, ``{DC} power flow revisited,'' \emph{IEEE
  Transactions on Power Systems}, vol.~24, no.~3, pp. 1290--1300, 2009.

\bibitem{Garcia16TPS}
M.~Garcia, T.~Catanach, S.~Vander~Wiel, R.~Bent, and E.~Lawrence, ``Line outage
  localization using phasor measurement data in transient state,'' \emph{IEEE
  Transactions on Power Systems}, vol.~31, no.~4, pp. 3019--3027, 2016.

\bibitem{dagle2010north}
J.~E. Dagle, ``{The North American SynchroPhasor Initiative (NASPI)},'' in
  \emph{IEEE PES General Meeting}.\hskip 1em plus 0.5em minus 0.4em\relax IEEE,
  2010, pp. 1--3.

\bibitem{PMUdeployment}
``{SynchroPhasor} technology fact sheet,'' North American SynchroPhasor
  Initiative, October 2014,
  \url{https://www.naspi.org/sites/default/files/reference\_documents/33.pdf}.

\bibitem{NASPI2015Kit}
``{NASPI} synchrophasor starter kit (draft),'' North American SynchroPhasor
  Initiative (NASPI), October 2015,
  \url{https://www.naspi.org/sites/default/files/reference\_documents/4.pdf}.

\bibitem{jones2013three}
K.~D. Jones, J.~S. Thorp, and R.~M. Gardner, ``Three-phase linear state
  estimation using phasor measurements,'' in \emph{2013 IEEE Power \& Energy
  Society General Meeting}.\hskip 1em plus 0.5em minus 0.4em\relax IEEE, 2013,
  pp. 1--5.

\bibitem{jones2014methodology}
K.~D. Jones, A.~Pal, and J.~S. Thorp, ``Methodology for performing
  synchrophasor data conditioning and validation,'' \emph{IEEE Transactions on
  Power Systems}, vol.~30, no.~3, pp. 1121--1130, 2014.

\bibitem{WASA}
``{WASA} and the roadmap to {WAMPAC} at {SDG\&E},'' September 2020,
  \url{https://quanta-technology.com/wp-content/uploads/2020/09/WASA-and-the-Roadmap-to-WAMPAC-at-SDGE.pdf}.

\bibitem{WAMPACsecurity}
``Wide area monitoring, protection, and control systems {(WAMPAC)} standards
  for cyber security requirements,'' National Electric Sector Cybersecurity
  Organization Resource (NESCOR), October 2012,
  \url{https://smartgrid.epri.com/doc/ESRFSD.pdf}.

\bibitem{pal2006robust}
B.~Pal and B.~Chaudhuri, \emph{Robust control in power systems}.\hskip 1em plus
  0.5em minus 0.4em\relax Springer Science \& Business Media, 2006.

\bibitem{lu2016under}
M.~Lu, W.~ZainalAbidin, T.~Masri, D.~Lee, and S.~Chen, ``Under-frequency load
  shedding (ufls) schemes-a survey,'' \emph{International Journal of Applied
  Engineering Research}, vol.~11, no.~1, pp. 456--472, 2016.

\bibitem{verifiable21arXiv}
Y.~Huang, T.~He, N.~R. Chaudhuri, and T.~L. Porta, ``Verifiable failure
  localization in smart grid under cyber-physical attacks,'' arXiv: 2101.07129,
  Jan. 2021, \url{https://arxiv.org/abs/2101.07129}.

\bibitem{zimmerman2019matpower}
R.~D. Zimmerman and C.~E. Murillo-S{\'a}nchez, ``Matpower 7.0 user’s
  manual,'' \emph{Power Systems Engineering Research Center}, vol.~9, 2019.

\bibitem{dasgupta2008algorithms}
S.~Dasgupta, C.~H. Papadimitriou, and U.~V. Vazirani, \emph{Algorithms}.\hskip
  1em plus 0.5em minus 0.4em\relax McGraw-Hill Higher Education New York, 2008.

\bibitem{mangasarian1994nonlinear}
O.~L. Mangasarian, \emph{Nonlinear programming}.\hskip 1em plus 0.5em minus
  0.4em\relax SIAM, 1994.

\end{thebibliography}

\newpage
\section*{{Appendix: Additional Proofs}}

\begin{proof}[Lemma~\ref{lem:supp in V_H}]
{For a link $(s,t)$, define a column vector $\bfx_{st}\in \{-1,0,1\}^{|V| }$, which has $1$ in $s$-th element, $-1$ in $t$-th element, and $0$ elsewhere. The failure of links in $F$ changes the admittance matrix by\footnote{There was a mistake in the proof of \cite[Lemma~1]{Soltan18TCNS}, which claimed that $\bfB' = \bfB - \sum_{(s,t)\in F}b_{st} \bfx_{st} \bfx_{st}^T$.}
\begin{align}\label{eq:B'}
\bfB' = \bfB + \sum_{(s,t)\in F}b_{st} \bfx_{st} \bfx_{st}^T,
\end{align}
where $b_{st}$ is the $(s,t)$-th element in $\bfB$.
Before the attack, we have $\bfB\bftheta = \bfP$.  After the attack, we have $\bfB'\bftheta' = \bfP'  = \bfP - \Delta$. Therefore, the following holds:
\begin{align}
&\bfB\bftheta - \bfB'\bftheta' = \bfDelta \label{eq:supp proof 1}\\
&\Rightarrow \bfB(\bftheta - \bftheta') - \bfDelta = \sum_{(s,t)\in F}b_{st} \bfx_{st} \bfx_{st}^T \bftheta' \label{eq:supp proof}\\
&\Rightarrow \supp(\bfB(\bftheta-\bftheta')-\bfDelta) \subseteq \bigcup_{(s,t)\in F}\{s,t\} \subseteq V_H,
\end{align}
where (\ref{eq:supp proof}) is obtained by plugging in (\ref{eq:B'}) into (\ref{eq:supp proof 1}). }
\end{proof}

\begin{proof}[Theorem~\ref{thm:recovery of phase angles}]
{By Lemma~\ref{lem:supp in V_H}, we see that $\bfB_{\bar{H}|G}(\bftheta - \bftheta') - \bfDelta_{\bar{H}} = \mathbf{0}$. Writing this equation in more detail shows that
\begin{align}
&\bfB_{\bar{H}|H}(\bftheta_H - \bftheta'_H) + \bfB_{\bar{H}|\bar{H}}(\bftheta_{\bar{H}} - \bftheta'_{\bar{H}}) - \bfDelta_{\bar{H}} = \mathbf{0} \\
&\Rightarrow \bfB_{\bar{H}|H}\bftheta'_H = \bfB_{\bar{H}|H}\bftheta_H + \bfB_{\bar{H}|\bar{H}}(\bftheta_{\bar{H}}-\bftheta'_{\bar{H}}) - \bfDelta_{\bar{H}}. \label{eq:phase angle proof}
\end{align}
Since both $\bfB_{\bar{H}|H}$ and the righthand side of (\ref{eq:phase angle proof}) are known to the control center, we can uniquely recover $\bftheta'_H$ if $\bfB_{\bar{H}|H}$ has a full column rank. }
\end{proof}

\begin{proof}[Lemma~\ref{lem:recover delta}]
{As failures can only occur within $E_H$, nodes in $N(v;\bar{H})$ must be in the same island as $v$ after the attack. Under the proportional load shedding policy, we know that (i) if $\exists u\in N(v;\bar{H})$ of the same type as $v$, then we can recover the post-attack active power at $v$ by $p'_v = p_v p'_u/p_u$ and thus recover $\Delta_v$; (ii) if $\exists u\in N(v;\bar{H})$ of a different type from $v$ (e.g., $u$ is a generator bus but $v$ is a load bus) and $\Delta_u\neq 0$, then $\Delta_v$ must be zero. This proves the claim. }
\end{proof}

\begin{proof}[Lemma~\ref{lem:existence of solution with supp = F}]
{Note that by definition, $\bfx_{st}$ defined in the proof of Lemma~\ref{lem:supp in V_H} is the same as the column corresponding to link $(s,t)$ in $\bfD$. Define a vector $\bfy\in \mathbb{R}^{|E| }$ by
\begin{align}
    y_e = \left\{\begin{array}{ll}
    b_{st}(\theta'_s - \theta'_t) & \mbox{if } e = (s,t) \in F,\\
    0 & \mbox{o.w.}
    \end{array}\right.
\end{align}
Then it is easy to see that $\sum_{(s,t)\in F}b_{st} \bfx_{st} \bfx_{st}^T \bftheta' = \bfD \bfy$.
By (\ref{eq:supp proof}), we have $\bfB(\bftheta - \bftheta') - \bfDelta = \bfD \bfy$. Considering only the equations corresponding to $V_H$ yields
\begin{align}
    \bfB_{H|G}(\bftheta - \bftheta') -\bfDelta_H = \bfD_H \bfy_H,
\end{align}
where we have used the fact that $\bfy_{\bar{H}}= \mathbf{0}$. Thus $\bfx = \bfy_H$ satisfies the conditions in the lemma. }
\end{proof}

\begin{proof}[Theorem~\ref{thm:localize failed links}]
{Condition (1) is implied by \cite[Lemma~3]{Soltan18TCNS}, which proved that $\bfD_H$ has a full column rank if and only if $H$ is acyclic. This combined with Lemma~\ref{lem:existence of solution with supp = F} shows that if $H$ is acyclic, then  (\ref{eq:D_H x equation}) only has one solution, and hence the support of this solution must be $F$.\\
\indent Condition (2) is implied by the proof of \cite[Theorem~2]{Soltan18TCNS}, which showed that if $H$ satisfies this condition, then any solution $\bfx$ to (\ref{eq:D_H x equation}) satisfies $\|\bfx\|_1\geq \|\bfx^*\|_1$, where $\bfx^*$ is a vector satisfying the conditions in Lemma~\ref{lem:existence of solution with supp = F}. Moreover, it showed that $\|\bfx\|_1 = \|\bfx^*\|_1$ only if $\bfx = \bfx^*$. Thus, $\bfx^*$, whose support equals $F$, can be computed by minimizing $\|\bfx\|_1$ s.t. (\ref{eq:D_H x equation}). \looseness=-1}
\end{proof}

\begin{proof}[Lemma~\ref{lem:complexity_p0}]
{We will prove the claim by a reduction from the \emph{subset sum problem}, which is known to be NP-hard \cite{dasgupta2008algorithms}. Given any set of non-negative integers $\{f_i\ge 0\}_{i=1}^n$ and a target value $T$, the subset sum problem determines whether there exists $\{x_i\in \{0,1\}\}_{i=1}^n$ such that $\sum_{i=1}^n f_i x_i = T$. For each subset sum instance, we construct the following star-shaped attacked area $H$: let $H=(V_H,E_H)$ such that $V_H$ is composed of $n+1$ nodes, where node $u_0$ is the hub with $p_{u_0} = 0$ and $\theta_{u_0}' = 0$, and node $u_i$ ($i\in [n]$ for $[n]:=\{1,\ldots, n\}$) is incident to only one link $e_i = (u_0,u_i)$, with $p_{u_i} = -f_i$, $\theta_{u_i}' = -f_i$, and $r_{e_i} = 1$. In addition, $u_0$ is connected to $v\in V_{\bar{H}}$, with $\theta_{v}'=\sum_{i=1}^n f_i-T$, through link $e_0=(u_0,v)$ with $r_{e_0}=1$.
\\ \indent
By substituting \eqref{eq:pf_constraint} and $p_{u_0} = 0$, \eqref{eq:const_valid_load} for node $u_0$ becomes $\tilde{\bfD}_{H,u_0}\bfx_H = \bfB_{u_0|G}\bftheta'$,
where $\tilde{\bfD}_{H,u_0}$ is the row of $\tilde{\bfD}_{H}$ corresponding to node $u_0$. Since $(\tilde{D}_{H,u_0})_i = \frac{\theta_{u_0}'-\theta_{u_i}'}{r_{e_i}}$, it is easy to check that $\tilde{\bfD}_{H,u_0}\bfx_H = \sum_{i=1}^n f_i x_i$. Moreover, $\bfB_{u_0|G}\bftheta' = \sum_{i=1}^n f_i + \frac{\theta_{u_0}'-\theta_{v}'}{r_{e_0}} = T$. Since $u_i$ ($i\in [n]$) is connected to only one link $e_i = (u_0,u_i)$, we have that $\tilde{\bfD}_{H,u_i}\bfx_H = -f_ix_i$ and $\bfB_{u_i|G}\bftheta' = -f_i$. Thus, \eqref{eq:const_valid_load} for $u_i$ becomes $-f_i \le -f_ix_i \le 0$, which is satisfied whatever value $x_i$ takes. Therefore, a subset sum instance returns true if and only if the instance of (P0) constructed as above is feasible, which completes the proof.}
\end{proof}

\begin{proof}[Lemma~\ref{lem:ground_alter_gale}]
{We prove the lemma in two steps. First, note that $\bm{c}^* = \bm{0}$ corresponding to the ground-truth $F$ is feasible for \eqref{eq:primal_theo_both}. If $\hat{F}$ is returned by Algorithm~\ref{alg: fedgeDet} with $e \in Q_m$, there must exist a corresponding optimal solution $\bm{c}$ to \eqref{eq:primal_theo_both} with $c_e \le \eta-1$ and $\bm{1}^T\bm{c} \le \bm{1}^T\bm{c}^* = 0$. Together with the feasibility constraints in \eqref{eq:primal_theo_both}, $\vc$ must satisfy
\begin{align}\label{eq:feasibility_incorrect}
    [\bm{A}_D^T, \bm{A}_x^T, \bm{W}^T, \bm{1}]^T\bm{c} \le [\bm{g}_D^T, \bm{g}_x^T, \bm{g}_w^T, 0]^T,
\end{align}
where $\bm{W}$ and $\bm{g}_w$ are defined such that $e \in Q_m$. To prove $e\notin Q_m$, we only need to show the infeasibility of \eqref{eq:feasibility_incorrect}, which can be proved if there is no solution to \eqref{eq:feasibility_incorrect} when $\bm{W}$ and $\bm{g}_w$ are defined for $Q_m = \{e\}, Q_f = \emptyset$. This is because a linear system must be infeasible if there is no solution to a subset of its inequalities. According to Gale's theorem of alternative~\cite{mangasarian1994nonlinear}, there is no solution to \eqref{eq:feasibility_incorrect} if and only if there exists solutions $\bm{z}\ge \bm{0}$ to \eqref{eq:alter_gale}, which completes the proof.
}
\end{proof}

\begin{proof}[Theorem~\ref{lem: final_lemma}]
{We first prove the condition for a failed link $e\in F$. Based on Lemma~\ref{lem:ground_alter_gale}, we prove by constructing a solution to \eqref{eq:alter_gale} w.r.t $Q_f = \emptyset$ and $Q_m = \{e\}$. We prove by directly constructing the following $\bm{z}$ for \eqref{eq:alter_gale}: (\romannumeral1) $z_* = f_{T,0}$; (\romannumeral2) {$\forall v\in V_H$, set $z_{D,v} = \sum_{U_i\in T_n}\mathbb{I}_{U_i}(v) R_{U_i}$, $z_{D,-v} = \sum_{U_i\in T_p}\mathbb{I}_{U_i}(v) R_{U_i}$, where $\mathbb{I}_{U_i}(v)$ is the indicator function whose value is 1 if $v \in U_i$ and 0 otherwise;} (\romannumeral3) $\forall l\in E_H\setminus F$, set $z_{x-,l} = \tilde{D}_{T,l}+z_*$; (\romannumeral4) $\forall l\in F$ and $l\ne e$, set $z_{x+,l} = |\tilde{D}_{T,l}|-z_*$; (\romannumeral5) $z_{w,m,e} = |\tilde{D}_{T,e}|-z_*$; (\romannumeral6) the rest entries of $\bm{z}$ are set as $0$. Note that $\forall l\in E_H\setminus F$, $z_{x-,l}\ge 0$ due to the definition of $f_{T,0}$. Furthermore, $\forall l\in F\setminus \{e\}$, $z_{x+,l}\ge 0$ since $|\tilde{D}_{T,l}| \ge f_{T,0}$ by the assumption on $T$, and $z_{w,m,e}\ge 0$ for a similar reason. {We will show that \eqref{eq:alter_gale} is satisfied under this assignment. First, note that according to the definition in \eqref{eq:def_DTe}, $\tilde{D}_{T,l} \leq 0$ for all $l\in F$, which implies that $\tilde{D}_{T,l} < 0$ for all $l\in S_T$ and $\tilde{D}_{T,l} \ge 0, \forall l\in (E_H\setminus F) \setminus S_T$. Thus, $\forall l\in F\setminus\{e\}$, the left-hand-side of \eqref{eq:alter_gale_eq} can be expanded as $\sum_{U_i\in T_n} R_{U_i}\tilde{D}_{U_i,l} + \sum_{U_i\in T_p} R_{U_i}(-\tilde{D}_{U_i,l}) + z_{x+,l} + z_* = \tilde{D}_{T,l} + |\tilde{D}_{T,l}|-z_* + z_* = 0$. Similarly, the row of the left-hand-side of \eqref{eq:alter_gale_eq} corresponding to $e$ can be expanded as $\tilde{D}_{T,e} + z_{w,m,e} + z_* = 0$, while the rows corresponding to $l\in E_H\setminus F$ can be expanded as $\tilde{D}_{T,l} - z_{x-,l} + z_* = 0$. Moreover, the left-hand-side of  \eqref{eq:alter_gale_ineq} can be expanded as $f_{T,g} + (\eta-1)(|\tilde{D}_{T,e}| - f_{T,0})$, which satisfies \eqref{eq:alter_gale_ineq} due to \eqref{eq: no_miss_hyper_f}. Note that this assignment of $\bm{z}$ is valid for any possible $Q_m$ and $Q_f$ with $e\in Q_m$.}  That is to say, there is always a non-negative solution to \eqref{eq:alter_gale} if $e\in Q_m$, which implies that $e$ will not be missed by Algorithm~\ref{alg: fedgeDet} according to Lemma~\ref{lem:ground_alter_gale}.
\\ \indent
Next, we prove the condition for an operational link $e'\in E_H\setminus F$. Again, we prove by constructing a solution to \eqref{eq:alter_gale} w.r.t $Q_f = \{e'\}$ and $Q_m = \emptyset$. To this end, we construct the following assignment for $\bm{z}$: (\romannumeral1) $z_* = f_{T,1}$; (\romannumeral2) {$\forall v\in V_H$, set $z_{D,v} = \sum_{U_i\in T_n}\mathbb{I}_{U_i}(v) R_{U_i}$, $z_{D,-v} = \sum_{U_i\in T_p}\mathbb{I}_{U_i}(v) R_{U_i}$}; (\romannumeral3) $\forall l\in E_H\setminus F$ and $l\ne e'$, set $z_{x-,l} = \tilde{D}_{T,l}+z_*$; (\romannumeral4) $\forall l\in F$, set $z_{x+,l} = |\tilde{D}_{T,l}|-z_*$; (\romannumeral5) $z_{w,f,e'} = \tilde{D}_{T,e'}+z_*$; (\romannumeral6) the rest entries of $\bm{z}$ are set as $0$. We will show that $\bm{z} \ge 0$. For $l\in F$, $z_{x+,l}\ge 0$ due to the definition of $f_{T,1}$.  Thus, $\forall l\in E_H\setminus(F\cup\{e'\})$, if $l\notin S_T$, then $z_{x-,l}\ge 0$ since $\tilde{D}_{T,l} \ge 0$; if $l\in S_T$, then $z_{x-,l}\ge f_{T,1}- f_{T,0} \ge 0$. Similarly, $z_{w,f,e'}\ge 0$. Furthermore, the left-hand-side of \eqref{eq:alter_gale_ineq} can be expanded as $[\bm{g}_D^T, \bm{g}_x^T, \bm{g}_w^T, \bm{0}]\bm{z} = f_{T,g} - \eta z_{w,f,e'}$, where $z_{w,f,e'} = z_* - |\tilde{D}_{T,e'}|$ if $e'\in S_T$ and $z_{w,f,e'} = z_* + |\tilde{D}_{T,e'}|$ if $e'\notin S_T$. Then, it is easy to check that $\eqref{eq:alter_gale}$ is satisfied under this assignment, which completes the proof according to Lemma~\ref{lem:ground_alter_gale}. }
\end{proof}

\begin{proof}[Corollary~\ref{cor:cond_connected}]
{We only prove the case that $H$ contains no generator bus since the other case can be proved similarly. We first prove that any failed link $l\in F$ will not be missed ($l\in \hat{F}$). Under Assumption~\ref{as:island_allLoad}, 
link $l$ must have one endpoint (say $u$) such that $\tilde{D}_{u,l} < 0$. Next, we will build a hyper-node $U$ such that the induced subgraph is a tree rooted at node $u$. Specifically, such hyper-node can be constructed by breadth-first search (BFS) starting from node $u$. In the first iteration of BFS, we start with $U = \{u\}$ and add a neighbor $v_i$ of $u$ into $U$ if $e = (u,v_i) \in F$ with $\tilde{D}_{u,l}\tilde{D}_{u,e} <0$ or $e = (u,v_i) \in E_U\setminus F$ with $\tilde{D}_{u,l}\tilde{D}_{u,e} >0$. Then, we repeatedly add node $v$ into $U$ if $\exists e=(s,v)\in E_U\cap F$ such that $\tilde{D}_{U,l}\tilde{D}_{U,e} <0$ or $\exists e=(s,v)\in E_U\setminus F$ such that $\tilde{D}_{U,l}\tilde{D}_{U,e} >0$. This procedure will terminate since $H$ is acyclic, and the constructed $U$ will satisfy condition~1) and condition~2) of Theorem~\ref{lem:no_Miss_hyper_node}. Since all nodes $u \in U$ are load buses, $\tilde{D}_{U,l} < 0$, and the grid stays connected after failure, we have $f_{U,g} = -\sum_{u\in U}\Delta_u = 0$, which satisfies condition~3) of Theorem~\ref{lem:no_Miss_hyper_node}. Thus, we have $F\subseteq \hat{F}$.
\\ \indent
Next, we show that any operational link $e\in E_H\setminus F$ will not be falsely detected by Algorithm~\ref{alg: fedgeDet} ($e\notin \hat{F}$). Under Assumption~\ref{as:island_allLoad}, link $e$ must have have one endpoint (say $u$) such that $\tilde{D}_{u,e} > 0$. The hyper-node $U$ can be constructed as follows: start with $U = \{ u \}$, add node $v$ into $U$ if $\exists e'= (s,v)\in E_U\cap F$ or $\exists e' = (s,v)\in E_U\setminus F$ such that $\tilde{D}_{U,e}\tilde{D}_{U,e'} < 0$. The resulting hyper-node must satisfy condition~1) and condition~2) of Theorem~\ref{lem:no_fa_hyper_direc}. Again, we have $f_{U,g} = -\sum_{u\in U}\Delta_u = 0$, which leads to satisfaction of condition~3) in Theorem~\ref{lem:no_fa_hyper_direc}. Therefore, we have $\hat{F} \subseteq F$.}
\end{proof}

\begin{proof}[Corollary~\ref{col:no_gen_load}]
{First, we construct a set of fail-cover hyper-nodes $T$ as required in Theorem~\ref{lem: final_lemma}. Let $V_{\mathcal{L},c}$ contain all the nodes in $\bigcup_{H_i\in \mathcal{L}}V_i$ that are incident to at least one link in $E_c$. Formally, $V_{\mathcal{L},c}:= \{v_j\}_{j=1}^{K_1}$, 
where $\forall v_j\in V_{\mathcal{L},c}$, $\exists l\in E_c$ such that $\tilde{D}_{v_j,l} \ne 0$. We construct $T :=\{U_j\}_{j=1}^{K_1}$, where $U_j = \{v_j\}$, as the set of fail-cover hyper-nodes, which automatically satisfies condition~2) in Theorem~\ref{lem: final_lemma}. 
\\ \indent
Next, we will show that the constructed $T$ satisfies condition~1) in Theorem~\ref{lem: final_lemma} with strict inequality. To this end, consider any $e=(u_1,u_2)\in S_T$. Suppose that $u_1,u_2\in V_i$. Recall from the proof of Theorem~\ref{lem: final_lemma} that $\tilde{D}_{T,e}<0$, and hence at least one of $U_1:=\{u_1\}$ and $U_2:=\{u_2\}$ must be in $T$ (as otherwise $\tilde{D}_{T,e}=0$). If only $U_1\in T$ ($U_2\not\in T$), then by \eqref{eq:def_DTe}, $\tilde{D}_{T,e}=R_{U_1}\tilde{D}_{U_1,e}$ if $\tilde{D}_{V_i,l}<0$, $\forall l\in E_{c,i}$, and $\tilde{D}_{T,e}=-R_{U_1}\tilde{D}_{U_1,e}$ if $\tilde{D}_{V_i,l}>0$, $\forall l\in E_{c,i}$. To satisfy $\tilde{D}_{T,e}<0$, we must have $\tilde{D}_{U_1,e} \tilde{D}_{V_i,l}>0$ ($\forall l\in E_{c,i}$), and thus $e\in S_{U_1}$. Therefore,
\begin{align}
    |\tilde{D}_{T,e}| &= R_{U_1}|\tilde{D}_{U_1,e}| \leq R_{U_1}f_{U_1,0} < R_{U_1}f_{U_1,1}, \label{eq:proof, U1 in T}
\end{align}
where $|\tilde{D}_{U_1,e}| \leq f_{U_1,0}$ is because of the definition of $f_{U_1,0}$ and that $e\in S_{U_1}$, and $f_{U_1,0} < f_{U_1,1}$ by condition 2) in this corollary. 
If $U_1,U_2\in T$, then $e$ must be in one and only one of $S_{U_1}$ and $S_{U_2}$, as $\tilde{D}_{U_1,l_1}\tilde{D}_{U_2,l_2} >0$ for all $l_1\in E_{U_1}\cap E_c$ and $l_2\in E_{U_2}\cap E_c$. Suppose that $e\in S_{U_1}$. By \eqref{eq:def_DTe}, $\tilde{D}_{T,e} = R_{U_1}\tilde{D}_{U_1,e}+R_{U_2}\tilde{D}_{U_2,e}$ if $\tilde{D}_{V_i,l}<0$ $\forall l\in E_{c,i}$, and $\tilde{D}_{T,e} = -R_{U_1}\tilde{D}_{U_1,e} - R_{U_2}\tilde{D}_{U_2,e}$ if $\tilde{D}_{V_i,l}>0$ $\forall l\in E_{c,i}$. If $R_{U_1}\le R_{U_2}$, we will have $\tilde{D}_{T,e} \ge 0$ since $\tilde{D}_{U_1,e} = -\tilde{D}_{U_2,e}$, which contradicts with $e\in S_T$. Thus, $R_{U_1}> R_{U_2}$, and hence
\begin{align}
    |\tilde{D}_{T,e}| &=(R_{U_1}-R_{U_2}) \cdot |\tilde{D}_{U_1,e}| \nonumber\\
    &\leq R_{U_1} |\tilde{D}_{U_1,e}| < R_{U_1} f_{U_1,1}, \label{eq:proof, U1, U2 in T}
\end{align}
where the last inequality holds for the same reason as \eqref{eq:proof, U1 in T}.
\\ \indent
Then, it suffices to prove that $\max_{U\in T}\{R_{U}f_{U,1}\}\le f_{T,1}$.  To see this, note that $\forall U\in T, R_{U}f_{U,1} = \max_{U'\in T}f_{U',1}$. Thus,
\begin{align}
f_{T,1} &= \min_{l\in E_c} \sum_{U\in T}\mathbb{I}_{E_U}(l)R_U|\tilde{D}_{U,l}| \geq \min_{l\in E_c} \sum_{U\in T}\mathbb{I}_{E_U}(l)R_U f_{U,1} \nonumber \\
&= \min_{l\in E_c}\sum_{U\in T}\mathbb{I}_{E_U}(l)\max_{U'\in T}f_{U',1} \ge \max_{U'\in T}f_{U',1}.
\end{align}
Combined with \eqref{eq:proof, U1 in T} and \eqref{eq:proof, U1, U2 in T}, this leads to $|\tilde{D}_{T,e}| < f_{T,1}$ for any $e\in S_T$. Therefore, we have $f_{T,0} < f_{T,1}$. \\ \indent
Finally, due to assumption~3) of this corollary, we have $f_{U,g} =0, \forall U\in T$, which leads to $f_{T,g} = 0$. Thus, due to  $f_{T,1}>f_{T,0}$, \eqref{eq: no_miss_hyper_f} holds for all $e\in E_c$ and \eqref{eq: no_fa_hyper_f} holds for all $e'\in E_H\setminus E_c$, which proves the corollary by Theorem~\ref{lem: final_lemma}.  }
\end{proof}

\end{document}